 \newcommand{\R}{{\mathbb R}}
\newtheorem{thm}{Theorem}[section]
 \newtheorem{cor}[thm]{Corollary}
 \newtheorem{lem}[thm]{Lemma}
\newtheorem{ex}[thm]{Example}
 \newtheorem{dfn}[thm]{Definition}
\newdefinition{remark}{Remark}
\newtheorem{discu}{Discussion:}
\numberwithin{equation}{section}
\newtheorem{conje}{Conjecture:}
\begin{document}
\vspace{-.4cm}
\begin{frontmatter}
\vspace{-.4cm}

\title{\sc{Comparison results for proper
multisplittings  of  rectangular matrices}}

\vspace{-.4cm}

\author{Chinmay Kumar Giri$^\dag$$^a$ and Debasisha Mishra$^\dag$$^b$}
\vspace{-.2cm}
\address{

                       $^{\dag}$ Department of Mathematics,\\
                        National Institute of Technology Raipur, India.
                        \\email$^a$:  ckg2357\symbol{'100}gmail.com
                        \\email$^b$:  dmishra\symbol{'100}nitrr.ac.in. \vspace{-.6cm}}

\vspace{-.62cm}
\begin{abstract}
The least square solution of minimum norm of a rectangular linear
system of equations can be found out iteratively by using  matrix
splittings. However, the convergence of such an iteration scheme
arising out of a matrix splitting is practically very slow in many
cases. Thus, works on improving the speed of the iteration scheme
have attracted great interest. In this direction, comparison of the
rate of convergence of the iteration schemes produced by two matrix
splittings is very useful. But, in the case of matrices having many
matrix splittings, this process is time-consuming. The main goal of
the current article is to provide a solution  to the above issue by
using proper multisplittings.  To this end, we propose a few
comparison theorems for proper weak regular splittings and proper
nonnegative splittings first. We then derive convergence and
comparison theorems for proper multisplittings with the  help of the
theory of proper weak regular splittings.
\end{abstract}

\vspace{-.2cm}

\begin{keyword}
Non-negativity; Moore-Penrose inverse; Proper splitting;
Multisplittings; Convergence theorem; Comparison theorem.

\vspace{.1cm} { \it AMS subject classifications:}  15A09
\end{keyword}
\vspace{-.2cm}
\end{frontmatter}
\vspace{-.2cm}
\newpage

\section{Introduction}
Let us consider a rectangular system of linear equations of the form
\begin{equation}\label{introeq1}
Ax=b,
\end{equation}
where $A$ is a real, large and sparse matrix of order $m \times n,$
$x$ is an unknown
 real $n$-vector, and $b$ is a given real $m$-vector. If (\ref{introeq1}) is inconsistent,
 then one usually seeks the least square solution of minimum norm. This solution vector $x$ is
 then computed  by $x=A^{\dagger}b$, where $A^{\dagger}$ is the Moore-Penrose  inverse
   of $A$ (see Section 2, for its definition). In a
wide variety of such problems, including the Neumann problem and
those for elastic bodies with free surfaces, the finite difference
formulations lead to a singular, consistent linear system of the
form (\ref{introeq1}), where A is large and sparse. In these
situations, one can opt for an iterative  method for finding  the
least square solution of minimum norm. Such a method where A is
rectangular or (\ref{introeq1}) is inconsistent, is studied in
\cite{bpcones}. In particular, the authors of \cite{bpcones} have
introduced the following
  iteration scheme to find the least square solution of minimum norm of the system (\ref{introeq1})
\begin{equation}\label{introeq2}
x^{i+1}=U^{\dagger}Vx^{i}+U^{\dagger}b, \quad i=0,1,2,\ldots,
\end{equation}
where $A=U-V$ is a proper splitting. A splitting\footnote{A
 splitting of a real rectangular matrix $A$ is an expression of the form
 $A = U-V$, where $U$ and $V$ are matrices of the same order as in
 $A$.} $A=U-V$ of $A\in
{\R}^{m\times n}$ (the set of all real $m\times n$ matrices) is
called a {\it proper splitting} (\cite{bpcones}) if $R(U)=R(A)$ and
$N(U)=N(A)$, where $R(A)$ and $N(A)$  denote  the range space and
the null space of $A$, respectively. The iteration scheme
(\ref{introeq2}) is said to be {\it convergent} if the spectral
radius of $U^{\dag}V$ is less than 1, and $U^{\dag}V$ is called the
{\it iteration matrix}. For the proper splitting $A=U-V$, the same
authors (\cite{bpcones}) have shown that the iteration scheme
(\ref{introeq2}) converges to $x=A^{\dag}b$, the least squares
solution of minimum norm, for any initial vector $x^0$ if and only
if  the iteration scheme (\ref{introeq2}) is convergent (see
Corollary 1, \cite{bpcones}). The advantage of the iterative method
for solving the rectangular system of linear equations
(\ref{introeq1}) is that it avoids the use of the normal system
$A^TAx = A^Tb$, where $A^TA$ is frequently ill-conditioned and
influenced greatly by roundoff errors (see \cite{f}). (Here $A^T$
stands for the transpose of a matrix $A$.)

 Berman and Plemmons \cite{bpcones}  have proved a
few convergence results for different classes of proper splittings
without calling them by any name. Later on, Climent and Perea
\cite{c3}, Climent {\it  et al.} \cite{c1} have introduced different
classes of proper splittings and studied its convergence theory.
Subsequently, it is  carried forward by Mishra and Sivakumar
\cite{misoam},  Jena {\it et al.} \cite{miscal}, Mishra
\cite{miscma},  Baliarsingh and Mishra \cite{alekha2}, and Giri and
Mishra \cite{d3}, to name a few.  Here we list  three
 important classes of proper splittings.
A proper splitting $A=U-V$ of $A\in {\R}^{m\times n}$   is called a \\
  (i)  {\it proper regular splitting} if   $U^{\dag}\geq 0$ and $V\geq 0$ (\cite{miscal}),\\
  (ii)  {\it proper  weak regular splitting} if  $U^{\dag}\geq 0$ and $U^{\dag}V\geq 0$ (\cite{miscal}),\\
  (iii)  {\it proper nonnegative splitting}  if $U^{\dag}V\geq 0$
  (\cite{miscma}).\\
($B\geq 0$ means each entry of $B$ is non-negative, and more on
these classes of proper splittings will be discussed in further
sections.) In the case of nonsingular matrices, the above
definitions coincide with regular (\cite{var}), weak regular
(\cite{var}) and nonnegative (\cite{songlaa}) splitting,\footnote{A
splitting $A=U-V$ of a real square matrix $A$ is called regular
(\cite{var}), if $U^{-1}$ exists, $U^{-1} \geq 0$
 and $V\geq 0$, weak regular (\cite{var}) if $U^{-1}$ exists, $U^{-1} \geq 0$
 and $U^{-1}V\geq 0$, and  nonnegative (\cite{songlaa}) if $U^{-1}$ exists and $U^{-1}V\geq 0$, respectively.}
 respectively.

Comparison theorems between the spectral radii of matrices are
useful tools in the analysis of  the rate of convergence of
iterative methods or for judging the efficiency of preconditioners.
A matrix
 $A$ may have different matrix splittings (say $A=U_1-V_1=U_2-V_2$). In
practice, we seek such an $U$ which not only makes the computation
$x^{i+1}$(given $x^{i}$) simpler but also yields the spectral radius
of $U^{\dag}V$ (which is of course less than 1)  as small as
possible for the faster  rate of convergence of the iteration scheme
(\ref{introeq2}). An accepted rule for preferring one iteration
scheme to another is to choose the iteration scheme  having the
smaller spectral radius. In this context, Jena {\it et al.}
\cite{miscal}, Mishra and Sivakumar \cite{misaml}, Mishra
\cite{miscma} and  Baliarsingh and Mishra  \cite{alekha2} have proved various comparison results for different
class of matrix splittings of rectangular matrices. In this article,
we propose a few more comparison results.

But one of the drawbacks of the above-discussed theory is that this
process needs more time when a matrix has many splittings as one can
compare two matrix splittings at a time. A natural question arises
at this level is ``can we have a faster iteration scheme than
(\ref{introeq2})''. This is answered by O'Leary and White
\cite{white} who have introduced the
  concept of the multisplitting method for obtaining the parallel solution of linear system of equations
   of the form (\ref{introeq1}), but in  the square nonsingular matrix setting.
A real $n \times n$ matrix $A$ is called {\it monotone} (or a matrix
of ``{\it monotone kind}")
 if $Ax \geq  0 ~\Rightarrow x \geq 0$. Here, $y\geq  0$ for $(y_1,y_2,\cdots,y_n)^{T}=y \in \mathbb{R}^n$
 means that $y_i \geq 0$ (or $y_i$ is non-negative) for all $i=1,2,\ldots,n$. This notion was introduced by
 Collatz,
 who has shown that $A$ is monotone if and only if $A^{-1}$ exists and $A^{-1} \geq 0$.
  The book by Collatz \cite{col2} has details of how monotone
   matrices arise naturally in the study of finite difference approximation methods for
    certain elliptic partial differential equations. The problem of characterizing monotone
    (also referred  as {\it inverse positive}) matrices in terms of matrix splittings
    has been extensively dealt with in the
    literature.   The books by Berman and Plemmons
\cite{bpbook} and Varga \cite{var} give an excellent account
     of many of these characterizations and its extension to
     rectangular matrices. O'Leary and White \cite{white} have provided
    the convergence theory of multisplittings for the class of monotone
     matrices, and is explained below.

   The triplet
   $(U_{k},V_{k},E_{k})_{k=1}^{p}$ is called a {\it multisplitting} of $A\in {\R}^{n\times n}$ if\\
$(i)~A=U_{k}-V_{k},~ \text{for each}~ k=1,2, \ldots,p$,\\
$(ii)~E_{k} \geq 0$ is a non-zero and diagonal matrix, for each $k=1,2,\ldots,p,$\\
$(iii)\displaystyle\sum_{k=1}^{p}E_{k}=I,~\text{where $I$ is the
identity matrix.}$

Using  the multisplitting $(U_{k},V_{k},E_{k})_{k=1}^{p}$, the
authors of \cite{white} have considered the following iteration
scheme:
\begin{equation}\label{introeq3}
x^{i+1}=Hx^{i}+Gb,\quad i=0,1,2,\ldots,
\end{equation}
where $\displaystyle H=\sum_{k=1}^{p}E_{k}U_{k}^{-1}V_{k}~~
\text{and}~~G=\sum_{k=1}^{p}E_{k}U^{-1}_{k}.$ The same authors
\cite{white} have shown that if $A=U_{k}-V_{k},\quad k=1,2,\ldots,p$
is a weak regular splitting
 of a monotone matrix $A$, then the iteration scheme (\ref{introeq3}) converges for any
  initial vector $x^{0}$. 

In contrast to the vast literature available on solving the square
nonsingular system of linear equations, iteratively, the researches
on solving the rectangular system of linear equations, iteratively
are limited. In particular, the theory of multisplittings has not
been studied much for rectangular matrices. Climent and Perea
\cite{c3} first introduced the concept of a proper multisplitting.
Thereafter, Baliarsingh and Jena \cite{alekha1} applied the same
theory to solve the square singular system of linear equations.  In
this note, we revisit the same theory first and  add a few more
results to existing theory with the objective to solve the
rectangular linear systems. Some of the results obtained in this
paper dealing with multisplittings theory are completely new even
for square nonsingular matrices.

The contents of this paper are organized in the following order.
Next Section includes some notation and fundamental concepts
concerned in our study. In Section 3 we set up the background, and
then establish a number of comparison results between two proper
weak regular splittings
 of different types. This is a prelude to Section 4 in which we
study  similar results as of section 3, but for proper nonnegative
splittings
 of different types.   Section 5 is devoted to the study of multisplittings of a rectangular matrix.
  Finally, Section 6 gives the conclusions of this
work.

\section{Preliminaries}
To present a remarkably reader-friendly convergence analysis of
rectangular matrix splittings, we first explain some basic notation
and definitions. In the subsequent sections, ${\R}^n$ means an
n-dimensional Euclidean space. If $L\oplus M= {\R}^n$, then
$P_{L,M}$ is referred as the projection onto $L$ along $M$. So,
$P_{L,M}A=A$ if and only if $R(A)\subseteq L$ and $AP_{L,M}=A$ if
and only if $N(A) \supseteq M$. If $L\perp M$, then $P_{L,M}$ will
be denoted by $P_{L}$.
%
 For $A\in {\R}^{m \times n}$, the
unique matrix $X\in{\R}^{n \times m}$ is called the
\textit{Moore-Penrose inverse} of $A$ if it satisfies the following
four equations:
 $$AXA=A,\quad XAX=X,\quad (AX)^{T}=AX ~~\text{and}~~(XA)^{T}=XA,$$
 and is denoted by $A^{\dagger}$. It always exists,
   and $~A^{\dagger}=A^{-1}$ in the case of a nonsingular matrix $A$.  Properties of
$A^{\dagger}$  which will be frequently used in this paper are:
 $R(A^{\dagger})=R(A^{T});~N(A^{\dagger})=N(A^{T});~AA^{\dagger}=P_{R(A)}$ and $A^{\dagger}A=P_{R(A^{T})}$
 (see \cite{ben} for more details).
%
%
A matrix $A \in {\R}^{m \times n}$ is called \textit{non-negative}
if $A\geq 0$, and $B \geq C$ if $B-C \geq 0$. Again, $B \gneq C$ means $B \geq C$ and $B \neq C$. Similarly,
  a matrix $A\in {\R}^{m \times n}$ is called \textit{positive} if each element of $A$ is positive,
  and is denoted by $A>0$. We also use the above notation for vectors as vectors can be seen as $n\times 1 $ matrices.
   A matrix $A \in {\R}^{m \times n}$ is called \textit{semimonotone} if
   $A^{\dagger}\geq0$.  For a matrix $A=(a_{ij}) \in {\R}^{n \times n}$,  the  set  of indices
    $i,j=  1, 2,\ldots, n$  will  be  denoted  by  $S$. A matrix $A$ is \textit{reducible} if there
     exists a nonvoid index set
$R$, $R \subset S$ and $R\neq S$ such that $a_{ij} = 0$ for $i\in R$
and $j \in S - R$, otherwise the matrix $A$ is \textit{irreducible}.
Clearly, each positive matrix is irreducible.
   The \textit{spectral radius} of a matrix $A \in {\R}^{n \times n}$ is denoted by $\rho(A)$, and is equal
   to the maximum of
     the moduli of the eigenvalues of $A$. Let $A$ and $B$ be two
      matrices of appropriate order such that the products
      $AB$ and $BA$ are defined. Then $\rho(AB)=\rho(BA)$.
       Before proceeding further, we collect certain results which are going
to be used in the sequel.

\begin{thm}\label{perron}\textnormal{(Theorem 2.20, \cite{var})}\\
If $A \in {\R}^{n \times n}$ and $A \geq 0$, then\\
$(a)$ A has a non-negative real eigenvalue equal to its spectral radius.\\
$(b)$ To $\rho(A)\geq 0$, there corresponds an eigenvector $x\geq
0.$
\end{thm}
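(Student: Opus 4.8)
The plan is to deduce the statement for a general nonnegative matrix from the strictly positive case by a perturbation-and-limit argument, which is the classical approach.

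\emph{Step 1: the positive case.} For $B>0$ I would use the Collatz--Wielandt quantity $r=\sup\{\,s\ge 0:\ Bx\ge sx\ \text{for some}\ x\gneq 0\,\}$ and show that $r=\rho(B)$ and that it is attained by a positive eigenvector. The defining set is bounded above because $Bx\ge sx$ with $x\gneq 0$ forces $B^kx\ge s^kx$ for every $k$, so $\|B^k\|_\infty\ge s^k x_i/\|x\|_\infty$ for an index $i$ with $x_i>0$, giving $s\le\lim_k\|B^k\|_\infty^{1/k}=\rho(B)$. For attainment, normalize $\|x\|_2=1$ and extract a limit $\hat x$ of a maximizing sequence; then $\hat x\ge 0$, $\hat x\ne 0$, and $B\hat x\ge r\hat x$. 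If $B\hat x\ne r\hat x$, then $B\hat x-r\hat x\gneq 0$, so (using $B>0$) the vector $y:=B\hat x$ is strictly positive and $By>ry$ entrywise, whence $\min_i (By)_i/y_i=r+\delta$ for some $\delta>0$ and $By\ge(r+\delta)y$, contradicting the definition of $r$. Hence $B\hat x=r\hat x$, and $r\hat x=B\hat x>0$ forces $\hat x>0$ and $r>0$. Finally, for any eigenvalue $\lambda$ of $B$ with eigenvector $z$ we have $\abs\lambda\,\abs z=\abs{Bz}\le B\abs z$ entrywise (since $B\ge 0$), so $\abs\lambda\le r$; taking $\abs\lambda=\rho(B)$ yields $\rho(B)\le r$, hence $r=\rho(B)$.

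\emph{Step 2: the limit.} Given $A\ge 0$, set $A_\varepsilon=A+\varepsilon J>0$ for $\varepsilon>0$, where $J$ is the all-ones matrix; by Step 1 there is a positive unit vector $x_\varepsilon$ with $A_\varepsilon x_\varepsilon=\rho(A_\varepsilon)x_\varepsilon$. Since $0\le C\le D$ entrywise implies $C^k\le D^k$ entrywise and hence $\rho(C)=\lim_k\|C^k\|_\infty^{1/k}\le\lim_k\|D^k\|_\infty^{1/k}=\rho(D)$, the map $\varepsilon\mapsto\rho(A_\varepsilon)$ is nondecreasing and bounded below by $\rho(A)$, so $\rho(A_\varepsilon)\downarrow\tau\ge\rho(A)$ as $\varepsilon\downarrow 0$. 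By compactness of the unit sphere, $x_{\varepsilon_k}\to x$ along some sequence $\varepsilon_k\downarrow 0$, with $x\ge 0$ and $\|x\|_2=1$; passing to the limit in $A_{\varepsilon_k}x_{\varepsilon_k}=\rho(A_{\varepsilon_k})x_{\varepsilon_k}$ gives $Ax=\tau x$. Thus $\tau$ is an eigenvalue of $A$, so $\tau\le\rho(A)$ and therefore $\tau=\rho(A)$. This proves $(a)$, and the nonnegative vector $x$ is the eigenvector required in $(b)$.

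The hard part will be Step 1, in particular the verification that the Collatz--Wielandt maximizer $\hat x$ is a genuine eigenvector: it uses the strict positivity of $B$ essentially, to manufacture the \emph{strictly} positive vector $y=B\hat x$ into which a uniform gap $\delta>0$ can be inserted. This is also why the argument cannot be run directly on a general $A\ge 0$ (a nonnegative matrix with a zero row or column need not possess a positive eigenvector, and the ``enlarge $r$ by $\delta$'' step breaks down), and hence why one passes through $A_\varepsilon>0$ and the limiting argument of Step 2; the remaining ingredients (Gelfand's formula for $\rho$, monotonicity of $\rho$ on nonnegative matrices, continuity of matrix--vector multiplication) are routine. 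An alternative slick proof of $(a)$ alone: for real $t>\rho(A)$ one has $(tI-A)^{-1}=\sum_{k\ge 0}t^{-k-1}A^k\ge 0$; were $\rho(A)$ not an eigenvalue, $(tI-A)^{-1}$ would converge as $t\downarrow\rho(A)$ to the nonnegative matrix $(\rho(A)I-A)^{-1}$, while an eigenvalue $\lambda$ with $\abs\lambda=\rho(A)$ and eigenvector $z$ satisfies $A\abs z\ge\rho(A)\abs z$, i.e. $(\rho(A)I-A)\abs z\le 0$, so applying the nonnegative inverse forces $\abs z\le 0$, a contradiction.
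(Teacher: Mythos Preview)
Your argument is correct and follows the classical route: establish the Perron theorem for strictly positive matrices via the Collatz--Wielandt extremal characterization, then pass to general $A\ge 0$ by perturbing to $A+\varepsilon J>0$ and extracting a subsequential limit of the Perron eigenpairs. The paper, however, does not supply its own proof of this statement; it is quoted without proof as a preliminary result from Varga's textbook (Theorem~2.20 of \cite{var}), so there is nothing in the paper to compare your argument against.
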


\begin{thm}\label{perron1}\textnormal{(Theorem 2.7, \cite{var})}\\
If $A \in {\R}^{n \times n}$ is an irreducible matrix and $A \geq 0$, then\\
$(a)$ $A$ has a positive real eigenvalue equal to its spectral radius.\\
$(b)$ To $\rho(A)$, there corresponds an eigenvector $x > 0.$\\
$(c)$ $\rho(A)$ increases when any entry of $A$ increases.
\end{thm}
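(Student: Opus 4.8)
This is the classical Perron--Frobenius theorem for irreducible nonnegative matrices; the paper only quotes it from \cite{var}, so I outline the standard (Wielandt) variational argument one would give. The starting point is the elementary fact that the directed graph of an irreducible $A\geq 0$ is strongly connected, which forces $(I+A)^{n-1}>0$: off the diagonal, the $(i,j)$ entry dominates some power $(A^{k})_{ij}$ with $1\leq k\leq n-1$ coming from a directed walk from $i$ to $j$, while the diagonal is handled by the summand from $I$. For a vector $x\gneq 0$ put
\[
 r(x)=\min_{\,i:\ x_i\neq 0}\ \frac{(Ax)_i}{x_i}=\max\{\rho\geq 0 : Ax\geq\rho x\},
\]
and set $r=\sup\{r(x):x\gneq 0\}$. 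By positive homogeneity the supremum may be taken over the unit simplex. The function $r$ is not continuous there, but if $Ax\geq r(x)x$ then multiplying by the positive matrix $(I+A)^{n-1}$, which commutes with $A$, gives $Ay\geq r(x)y$ for $y=(I+A)^{n-1}x>0$, so $r(y)\geq r(x)$; consequently the supremum is already attained on the compact set of such positive vectors $y$, on which $r$ is continuous. Let $z>0$ be an extremal vector, $r(z)=r$.

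Next one shows $Az=rz$. If instead $Az\gneq rz$, then applying $(I+A)^{n-1}$ yields $Aw>rw$ (strict, since a positive matrix sends a nonzero nonnegative vector to a positive one) with $w=(I+A)^{n-1}z>0$, whence $r(w)>r$, contradicting the definition of $r$. Thus $r$ is an eigenvalue of $A$ with positive eigenvector $z$; moreover $r>0$, since for $n\geq 2$ irreducibility forces every row of $A$ to be nonzero, so $rz=Az>0$ (the case $n=1$ being trivial). Finally, $r=\rho(A)$: for any eigenvalue $\lambda$ with eigenvector $u$ one has $|\lambda|\,|u|=|Au|\leq A|u|$, so $r(|u|)\geq|\lambda|$ and therefore $|\lambda|\leq r$, giving $\rho(A)\leq r$; the reverse inequality is immediate because $r$ is itself an eigenvalue. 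This proves (a) and (b).

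For (c), let $\tilde A\gneq A$ be obtained from $A$ by increasing one entry; $\tilde A$ is still nonnegative, and still irreducible because enlarging entries cannot disconnect the graph. Let $u>0$ be the Perron eigenvector of $A$ just constructed. Since $\tilde A-A$ is nonnegative with at least one positive entry and $u>0$, the vector $(\tilde A-A)u$ is nonnegative and nonzero, so $\tilde A u=\rho(A)u+(\tilde A-A)u\gneq\rho(A)u$. Multiplying by $(I+\tilde A)^{n-1}>0$ turns this into the strict inequality $\tilde A w>\rho(A)w$ with $w=(I+\tilde A)^{n-1}u>0$; hence $\rho(\tilde A)\geq r_{\tilde A}(w)>\rho(A)$, as claimed.

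The only genuinely delicate point in this plan is the attainment of the supremum defining $r$: the map $x\mapsto(I+A)^{n-1}x$ is used both to move an arbitrary nonnegative vector into the open positive cone, where $r$ becomes continuous, and --- via the monotonicity $r((I+A)^{n-1}x)\geq r(x)$ --- to guarantee that nothing is lost by this restriction. Once this compactness argument is in place, the eigenpair identity $Az=rz$, the identification $r=\rho(A)$, and the strict monotonicity in (c) all follow from short positivity estimates.
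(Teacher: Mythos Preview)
The paper does not prove this statement; it is stated without proof as a preliminary result quoted from Varga \cite{var}. So there is no ``paper's own proof'' to compare against.

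Your argument is the standard Collatz--Wielandt variational proof and is correct. A couple of minor points worth tightening if you keep this write-up: when you invoke compactness, make explicit that the relevant compact set is the image of the unit simplex under $(I+A)^{n-1}$ (the open positive cone intersected with the unit sphere is not compact, so the phrasing ``the compact set of such positive vectors $y$'' should be read as this image); and in the step showing $r>0$, the claim that irreducibility for $n\ge 2$ forbids zero rows follows directly from the paper's definition of reducibility (take $R=\{i\}$ if row $i$ vanishes), which you might cite rather than leave implicit. Otherwise the attainment argument, the contradiction establishing $Az=rz$, the identification $r=\rho(A)$ via $|\lambda|\,|u|\le A|u|$, and the strict monotonicity in (c) are all sound.
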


\begin{thm}\textnormal{(Theorem 2.21,~\cite{var})}\label{var1}\\
If $A,~B \in {\R}^{n \times n}$ and $A \geq B \geq 0$, then
$\rho(A)\geq \rho(B).$
\end{thm}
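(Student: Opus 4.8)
\medskip
\noindent\textit{Proof proposal.}\enspace
The plan is to reduce the claim to the Perron--Frobenius theory recalled above. First I would apply Theorem~\ref{perron} to the nonnegative matrix $B$ to obtain an eigenvector $x\gneq 0$ with $Bx=\rho(B)x$. Since $A\geq B$ and $x\geq 0$, the vector $(A-B)x$ is nonnegative, so $Ax\geq Bx=\rho(B)x\geq 0$. A routine induction then yields $A^{m}x\geq\rho(B)^{m}x$ for every $m\geq 1$: if $A^{m}x\geq\rho(B)^{m}x\geq 0$, then, since multiplication by $A\geq 0$ preserves inequalities between nonnegative vectors, $A^{m+1}x=A(A^{m}x)\geq\rho(B)^{m}Ax\geq\rho(B)^{m+1}x$. (If $\rho(B)=0$ the conclusion $\rho(A)\geq 0=\rho(B)$ is automatic, so one may assume $\rho(B)>0$.)

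Next I would convert this growth estimate into the bound $\rho(A)\geq\rho(B)$ by pairing $A^{m}x$ with a left Perron vector of $A$. Applying Theorem~\ref{perron} to $A^{T}$ (nonnegative, with $\rho(A^{T})=\rho(A)$) produces $y\gneq 0$ with $y^{T}A=\rho(A)y^{T}$, but the argument requires $y^{T}x>0$, which can fail when the supports of $x$ and $y$ are disjoint. To get around this I would perturb: for $\varepsilon>0$ set $A_{\varepsilon}=A+\varepsilon J$, where $J$ is the $n\times n$ matrix of all ones, so that $A_{\varepsilon}>0$ and hence irreducible; by Theorem~\ref{perron1} there is a \emph{positive} left eigenvector $y_{\varepsilon}>0$ with $y_{\varepsilon}^{T}A_{\varepsilon}=\rho(A_{\varepsilon})y_{\varepsilon}^{T}$. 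Since $A_{\varepsilon}\geq A\geq B$, the same induction gives $A_{\varepsilon}^{m}x\geq\rho(B)^{m}x$, so
\[
\rho(A_{\varepsilon})^{m}\,\bigl(y_{\varepsilon}^{T}x\bigr)=y_{\varepsilon}^{T}A_{\varepsilon}^{m}x\geq\rho(B)^{m}\,\bigl(y_{\varepsilon}^{T}x\bigr).
\]
Because $y_{\varepsilon}>0$ and $x\gneq 0$ force $y_{\varepsilon}^{T}x>0$, we may cancel it, take $m$-th roots, and obtain $\rho(A_{\varepsilon})\geq\rho(B)$; letting $\varepsilon\to 0^{+}$ and using continuity of the spectral radius in the entries of the matrix gives $\rho(A)\geq\rho(B)$.

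The step I expect to be the main obstacle is exactly the degenerate case $y^{T}x=0$: the ``one-line'' pairing argument is incomplete without the irreducibility/perturbation device (or an equivalent, such as a Collatz--Wielandt characterisation of $\rho$). Two shortcuts avoid it but rely on facts not recalled among the preliminaries above: (i) if $\rho(A)<\rho(B)$ then $\rho\bigl(A/\rho(B)\bigr)<1$, hence $\bigl(A/\rho(B)\bigr)^{m}\to 0$ and $A^{m}x/\rho(B)^{m}\to 0$, contradicting $A^{m}x/\rho(B)^{m}\geq x\gneq 0$; (ii) from $0\leq B^{m}\leq A^{m}$ one gets $\|B^{m}\|\leq\|A^{m}\|$ for any matrix norm monotone on the nonnegative matrices, and concludes from Gelfand's formula $\rho(M)=\lim_{m\to\infty}\|M^{m}\|^{1/m}$. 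I would present the perturbation argument as the main proof, since it uses only Theorems~\ref{perron} and \ref{perron1} together with continuity of eigenvalues.
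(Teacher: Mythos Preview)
The paper does not prove Theorem~\ref{var1} at all: it is quoted in the Preliminaries as Theorem~2.21 of \cite{var} and used as a black box, so there is no ``paper's own proof'' to compare against. Your argument is correct and self-contained.

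That said, your route is considerably longer than necessary given the tools the paper has already placed at your disposal. After your first two steps---applying Theorem~\ref{perron} to $B$ to obtain $x\gneq 0$ with $Bx=\rho(B)x$, and observing that $Ax\geq Bx=\rho(B)x$ since $A-B\geq 0$ and $x\geq 0$---you are done: Lemma~\ref{lemma3.2}\,(i), which the paper also records among its preliminaries, says precisely that $A\geq 0$, $x\gneq 0$, and $\rho(B)x\leq Ax$ imply $\rho(B)\leq\rho(A)$ (the case $\rho(B)=0$ being trivial). The induction on $A^{m}x$, the left Perron vector, the perturbation $A_\varepsilon=A+\varepsilon J$, and the continuity-of-eigenvalues limit are all unnecessary detours; in effect Lemma~\ref{lemma3.2}\,(i) already packages the Collatz--Wielandt/perturbation machinery you are rebuilding by hand. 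What your longer argument buys is independence from that lemma---it works using only Theorems~\ref{perron} and~\ref{perron1} plus continuity---so it would be appropriate in a setting where Lemma~\ref{lemma3.2} is not available, but here the two-line proof via Lemma~\ref{lemma3.2}\,(i) is the natural choice.
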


\begin{thm}\textnormal{(Theorem 3.15, \cite{var})}\label{var2}\\
Let $X\in{\R}^{n \times n}$ and $X \geq 0$. Then $\rho(X)<1$ if and
only if $(I-X)^{-1}$
 exists and $(I-X)^{-1}=\displaystyle\sum_{k=0}^{\infty} X^{k}\geq 0$.
\end{thm}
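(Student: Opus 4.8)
The plan is to establish the two implications of the equivalence separately, using as the algebraic backbone the telescoping identity
\[
(I-X)\sum_{k=0}^{m} X^{k} \;=\; I - X^{m+1} \;=\; \left(\sum_{k=0}^{m} X^{k}\right)(I-X),
\]
valid for every $m\ge 0$, together with the classical fact that for a square matrix $X$ one has $\rho(X)<1$ if and only if $X^{m}\to 0$ as $m\to\infty$. I would record this last fact as a preliminary observation: passing to the Jordan canonical form $X=PJP^{-1}$, the powers $X^{m}=PJ^{m}P^{-1}$ tend to the zero matrix precisely when each Jordan block does, and a block with eigenvalue $\lambda$ produces entries of the form $\binom{m}{j}\lambda^{m-j}$, which tend to $0$ for all $j$ iff $|\lambda|<1$; the contrapositive is even more elementary, since if some eigenvalue $\lambda$ has $|\lambda|\ge 1$ with eigenvector $v\neq 0$, then $X^{m}v=\lambda^{m}v$ does not tend to $0$.

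For the forward implication, assume $\rho(X)<1$. Then $1$ is not an eigenvalue of $X$, so $I-X$ is nonsingular. Multiplying the identity above on the left by $(I-X)^{-1}$ gives $\sum_{k=0}^{m} X^{k}=(I-X)^{-1}\bigl(I-X^{m+1}\bigr)$; letting $m\to\infty$ and invoking $X^{m+1}\to 0$, we get $\sum_{k=0}^{m} X^{k}\to (I-X)^{-1}$, so the partial sums converge and $\sum_{k=0}^{\infty} X^{k}=(I-X)^{-1}$. Finally, $X\ge 0$ forces $X^{k}\ge 0$ for every $k$, hence every partial sum is nonnegative, and therefore so is the limit $(I-X)^{-1}$.

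For the converse, assume that $(I-X)^{-1}$ exists and equals $\sum_{k=0}^{\infty} X^{k}$. Convergence of this series forces its general term $X^{k}$ to tend to $0$, and by the preliminary observation this is exactly $\rho(X)<1$. (Alternatively, and using the Perron--Frobenius material already at hand: by Theorem~\ref{perron} there is $x\gneq 0$ with $Xx=\rho(X)x$, so $(I-X)x=(1-\rho(X))x$; if $\rho(X)=1$ this contradicts the invertibility of $I-X$, while if $\rho(X)>1$ then $(I-X)^{-1}x=\frac{1}{1-\rho(X)}x$ has a strictly negative entry wherever $x$ is positive, contradicting $(I-X)^{-1}\ge 0$ and $x\gneq 0$; hence $\rho(X)<1$.) The only genuinely non-routine ingredient is the equivalence $\rho(X)<1\iff X^{m}\to 0$, which rests on the Jordan form (or equivalently on Gelfand's formula $\rho(X)=\lim_{m}\|X^{m}\|^{1/m}$) rather than on anything stated earlier in the paper; everything else is the telescoping identity and the entrywise monotonicity of the powers of a nonnegative matrix.
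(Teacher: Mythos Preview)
The paper does not prove this statement at all: Theorem~\ref{var2} is quoted in the Preliminaries section as Theorem~3.15 of Varga's book \cite{var} and is used later as a black box, so there is no ``paper's own proof'' to compare against. Your argument is correct and is essentially the standard textbook proof (indeed, it is along the lines of the proof in \cite{var}): the telescoping identity $(I-X)\sum_{k=0}^{m}X^{k}=I-X^{m+1}$ together with the equivalence $\rho(X)<1\iff X^{m}\to 0$ handles both directions, and nonnegativity of the limit follows from $X\ge 0\Rightarrow X^{k}\ge 0$. Your parenthetical Perron--Frobenius alternative for the converse is also valid and nicely avoids appealing to the Jordan form, staying within the tools the paper has already recorded (Theorem~\ref{perron}).
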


\begin{lem}\label{lemma3.2}\textnormal{(Theorem 2.1.11, \cite{bpbook})}\\
Let $B \in {\R}^{n \times n},~B\geq 0,~x\geq 0 ~(x\neq 0)$ and $\alpha$ be a positive scalar.\\
$(i)~$ If $\alpha x \leq Bx $, then $\alpha \leq \rho(B)$. Moreover,
if $Bx> \alpha x$,
 then $\rho(B)> \alpha$.\\
$(ii)$ If $Bx \leq \alpha x,~x>0$, then $\rho(B) \leq \alpha$.
\end{lem}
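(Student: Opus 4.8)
The three claims can be handled by short arguments built on Theorem~\ref{var2} and Theorem~\ref{perron}; I expect no serious obstacle, the only point needing care being the possibility of a degenerate pairing $y^Tx=0$, which I avoid by choosing the argument appropriately for each part. For the first (non-strict) half of $(i)$ the plan is to argue by contradiction: suppose $\alpha>\rho(B)$. Then $B/\alpha\geq 0$ and $\rho(B/\alpha)=\rho(B)/\alpha<1$, so by Theorem~\ref{var2} the inverse $(I-B/\alpha)^{-1}$ exists and equals $\sum_{k=0}^{\infty}(B/\alpha)^{k}\geq 0$. Dividing $\alpha x\leq Bx$ by $\alpha$ rewrites the hypothesis as $(I-B/\alpha)x\leq 0$; multiplying on the left by the nonnegative matrix $(I-B/\alpha)^{-1}$ gives $x\leq 0$, and together with $x\geq 0$ this forces $x=0$, contradicting $x\neq 0$. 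Hence $\alpha\leq\rho(B)$.

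For the ``moreover'' clause of $(i)$ I would convert the strict inequality into a non-strict one with a genuine gap and then quote what has just been proved. If $Bx>\alpha x$, set $\beta=\min\{(Bx)_i/x_i:\ x_i>0\}$; the index set $\{i:\ x_i>0\}$ is nonempty, so $\beta$ is well defined and $\beta>\alpha$. For $i$ with $x_i>0$ one has $(Bx)_i\geq\beta x_i$ by construction, while for $i$ with $x_i=0$ one has $(Bx)_i\geq 0=\beta x_i$ since $Bx\geq 0$; hence $Bx\geq\beta x$, and the non-strict part of $(i)$ gives $\rho(B)\geq\beta>\alpha$.

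For $(ii)$ the extra hypothesis $x>0$ lets a left Perron vector be used directly. Since $B^{T}\geq 0$ and $\rho(B^{T})=\rho(B)$, Theorem~\ref{perron} supplies $y\geq 0$, $y\neq 0$, with $B^{T}y=\rho(B)\,y$. Pairing $Bx\leq\alpha x$ with $y$ on the left,
\[
\rho(B)\,y^{T}x\;=\;(B^{T}y)^{T}x\;=\;y^{T}(Bx)\;\leq\;\alpha\,y^{T}x,
\]
and since $x>0$ while $y\gneq 0$ we have $y^{T}x>0$, so dividing yields $\rho(B)\leq\alpha$. This same pairing would also prove the non-strict part of $(i)$ when $x>0$; the reason the separate Neumann-series argument (or, alternatively, a perturbation $B\mapsto B+\varepsilon J$ with $J$ the all-ones matrix, followed by $\varepsilon\to 0^{+}$ and continuity of $\rho$) is needed in general is precisely that $y^{T}x$ may vanish when $x$ has zero components --- the one delicate spot in the proof.
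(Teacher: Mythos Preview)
Your proof is correct. Note, however, that the paper does not supply its own proof of this lemma: it is quoted in the Preliminaries section as Theorem~2.1.11 of \cite{bpbook} and used as a black box. So there is no ``paper's proof'' to compare against; your argument stands on its own as a valid justification drawn entirely from the tools already available in Section~2 (Theorem~\ref{perron} and Theorem~\ref{var2}).
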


\begin{lem} \textnormal{(Lemma 3.16, \cite{d4})}\label{lemmad4}\\
Let $A, B \in {\R}^{n \times n}$ be two semimonotone matrices such
that $R(A)=R(B)$ and $N(A)=N(B)$. If $A \geq B$, then $B^{\dagger}
\geq A^{\dagger}$.
\end{lem}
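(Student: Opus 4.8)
The plan is to prove the Moore--Penrose analogue of the classical resolvent identity $B^{-1}-A^{-1}=B^{-1}(A-B)A^{-1}$ and then finish by a non-negativity argument. First I would translate the range and null space hypotheses into identities among orthogonal projectors. Since $R(A)=R(B)$, the associated projectors coincide: $AA^{\dagger}=P_{R(A)}=P_{R(B)}=BB^{\dagger}$. Since $N(A)=N(B)$, taking orthogonal complements gives $R(A^{T})=N(A)^{\perp}=N(B)^{\perp}=R(B^{T})$, and therefore $A^{\dagger}A=P_{R(A^{T})}=P_{R(B^{T})}=B^{\dagger}B$.

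Next I would expand the product $B^{\dagger}(A-B)A^{\dagger}=B^{\dagger}AA^{\dagger}-B^{\dagger}BA^{\dagger}$ and simplify each term using the projector identities together with the defining Moore--Penrose equations. For the first term, replacing $AA^{\dagger}$ by $BB^{\dagger}$ and using $B^{\dagger}BB^{\dagger}=B^{\dagger}$ yields $B^{\dagger}AA^{\dagger}=B^{\dagger}$. For the second term, replacing $B^{\dagger}B$ by $A^{\dagger}A$ and using $A^{\dagger}AA^{\dagger}=A^{\dagger}$ yields $B^{\dagger}BA^{\dagger}=A^{\dagger}$. Hence
\[
B^{\dagger}-A^{\dagger}=B^{\dagger}(A-B)A^{\dagger}.
\]

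Finally, semimonotonicity of $A$ and $B$ gives $A^{\dagger}\geq 0$ and $B^{\dagger}\geq 0$, while $A\geq B$ gives $A-B\geq 0$; since a product of entrywise non-negative matrices is again entrywise non-negative, the right-hand side of the displayed identity is $\geq 0$, which is exactly $B^{\dagger}\geq A^{\dagger}$. I do not expect a serious obstacle here: the only delicate points are deducing $R(A^{T})=R(B^{T})$ from $N(A)=N(B)$ and checking that the two telescoping simplifications collapse exactly to $B^{\dagger}$ and $A^{\dagger}$ with no leftover projector factors, so that the resolvent-type identity holds on the nose rather than merely up to multiplication by projections. Once that identity is secured, the non-negativity conclusion is immediate.
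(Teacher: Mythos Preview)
Your argument is correct: the projector identities $AA^{\dagger}=BB^{\dagger}$ and $A^{\dagger}A=B^{\dagger}B$ follow exactly as you say from $R(A)=R(B)$ and $N(A)=N(B)$, the resolvent-type identity $B^{\dagger}-A^{\dagger}=B^{\dagger}(A-B)A^{\dagger}$ then collapses cleanly with no residual projector factors, and the non-negativity conclusion is immediate.

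As for comparison, the paper does not actually prove this lemma; it is quoted in the Preliminaries as Lemma~3.16 of \cite{d4} and used later as a black box (in the proof of Theorem~\ref{multi4}). So there is no in-paper proof to compare against. Your self-contained derivation via the Moore--Penrose analogue of $B^{-1}-A^{-1}=B^{-1}(A-B)A^{-1}$ is a standard and efficient route; it would serve perfectly well as a proof of the cited lemma.
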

%
%
The following is the first result on a proper splitting.
\begin{thm}\textnormal{(Theorem 1, \cite{bpcones})}\label{bpthm}\\
Let $A=U-V$ be a proper splitting of $A\in{\R}^{m \times n}$. Then\\
$(a)~A=U(I-U^{\dagger}V),$\\
$(b)~I-U^{\dagger}V$ is nonsingular,\\
$(c)~A^{\dagger}=(I-U^{\dagger}V)^{-1}U^{\dagger}$.
\end{thm}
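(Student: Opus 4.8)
The plan is to prove Theorem~\ref{bpthm} directly from the definition of a proper splitting, namely that $R(U)=R(A)$ and $N(U)=N(A)$, together with the characterizations of $UU^{\dagger}$ and $U^{\dagger}U$ as orthogonal projectors collected in Section~2. First I would establish $(a)$: since $A=U-V$, I can write $A = U - V = U - UU^{\dagger}V + UU^{\dagger}V - V = U(I-U^{\dagger}V) + (UU^{\dagger}-I)V$, so it suffices to show $(I-UU^{\dagger})V = 0$, i.e. $UU^{\dagger}V = V$. Equivalently $V = A - U$ must have its range inside $R(U)$. Here is where the proper-splitting hypothesis enters: $R(V) = R(U-A) \subseteq R(U) + R(A) = R(U)$ because $R(A)=R(U)$. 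Since $UU^{\dagger}=P_{R(U)}$ acts as the identity on $R(U)\supseteq R(V)$, we get $UU^{\dagger}V=V$, hence $A = U(I-U^{\dagger}V)$.

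Next I would prove $(b)$, that $M:=I-U^{\dagger}V$ is nonsingular. The natural route is to show $N(M)=\{0\}$. Suppose $Mx=0$, i.e. $x = U^{\dagger}Vx$. Then $x \in R(U^{\dagger}) = R(U^{T})=N(U)^{\perp}$, which by the proper-splitting hypothesis equals $N(A)^{\perp}=R(A^{T})$. On the other hand, applying $U$ to $Mx=0$ and using $(a)$ gives $Ax = U(I-U^{\dagger}V)x = UMx = 0$, so $x\in N(A)$. Thus $x \in N(A)\cap N(A)^{\perp}=\{0\}$, so $M$ is injective on $\R^{n}$ and hence nonsingular. (One should take a little care that the relevant spaces are finite-dimensional so injectivity gives invertibility, which is immediate here.)

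Finally for $(c)$: from $(a)$ and $(b)$ we have $A = UM$ with $M$ invertible, so $AM^{-1}=U$, giving $U^{\dagger} = M A^{\dagger}$? Not quite — one cannot in general pull the Moore--Penrose inverse through a product, so I would instead verify directly that $X:=(I-U^{\dagger}V)^{-1}U^{\dagger}=M^{-1}U^{\dagger}$ satisfies the four Penrose equations for $A$, or, more cleanly, use the fact that $A$ and $U$ share range and null space. Since $R(U)=R(A)$ and $N(U)=N(A)$, we have $AA^{\dagger}=UU^{\dagger}=P_{R(A)}$ and $A^{\dagger}A = U^{\dagger}U = P_{R(A^{T})}$. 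Now compute $XA = M^{-1}U^{\dagger}A$ and note $U^{\dagger}A = U^{\dagger}U(I-U^{\dagger}V) = (U^{\dagger}U)M$; since $U^{\dagger}U = A^{\dagger}A$ acts as the identity on $R(A^{T})$ and the columns of $M$... this is the delicate bookkeeping step, so alternatively I would just check $AXA=A$, $XAX=X$, symmetry of $AX$ and $XA$ using $AX = UMM^{-1}U^{\dagger}=UU^{\dagger}=P_{R(A)}$ (symmetric!) and $XA = M^{-1}U^{\dagger}U M = M^{-1}(A^{\dagger}A)M = M^{-1}P_{R(A^{T})}M$, which one shows equals $P_{R(A^{T})}$ because $M=I-U^{\dagger}V$ maps $R(A^{T})$ into itself (as $R(U^{\dagger}V)\subseteq R(U^{\dagger})=R(A^{T})$) and $M$ is invertible. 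The main obstacle is precisely this last point — correctly handling the interplay between $M^{-1}$ and the projector $P_{R(A^{T})}$ to conclude $XA=P_{R(A^{T})}=A^{\dagger}A$; once that is in hand, the four Penrose identities follow and uniqueness of the Moore--Penrose inverse forces $X=A^{\dagger}$.
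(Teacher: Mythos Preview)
The paper does not prove this statement; it is quoted without proof in the Preliminaries as Theorem~1 of \cite{bpcones}, so there is no in-paper argument to compare against. Your approach is the standard one, and parts $(a)$ and $(b)$ are correct as written.

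In part $(c)$ there is a real (though easily repaired) gap. You assert that $M^{-1}P_{R(A^{T})}M=P_{R(A^{T})}$ ``because $M=I-U^{\dagger}V$ maps $R(A^{T})$ into itself \dots\ and $M$ is invertible.'' That inference is false in general: an invertible map preserving a subspace $W$ need not preserve $W^{\perp}$, and then $M^{-1}P_{W}M\neq P_{W}$ (take $M=\bigl(\begin{smallmatrix}1&1\\0&1\end{smallmatrix}\bigr)$ with $W=\operatorname{span}\{e_{1}\}$: one gets $M^{-1}P_{W}M=\bigl(\begin{smallmatrix}1&1\\0&0\end{smallmatrix}\bigr)\neq P_{W}$). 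What is missing is that $M$ also preserves $N(A)=R(A^{T})^{\perp}$: for $x\in N(A)=N(U)$ one has $Vx=(U-A)x=0$, hence $Mx=x\in N(A)$. Equivalently, $P_{R(A^{T})}$ and $M$ commute, since both $P_{R(A^{T})}M$ and $MP_{R(A^{T})}$ equal $P_{R(A^{T})}-U^{\dagger}V$ (the second using $U^{\dagger}VU^{\dagger}U=U^{\dagger}V$, which holds because $N(U)\subseteq N(V)$). Once $M$ and $P_{R(A^{T})}$ commute, $XA=M^{-1}P_{R(A^{T})}M=P_{R(A^{T})}$ is immediate, and together with your (correct) computation $AX=UU^{\dagger}=P_{R(A)}$ the four Penrose equations follow.
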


 Climent \textit{et al.} \cite{c1} again obtained a few more properties of
  a proper splitting which are reproduced next.

 \begin{thm}\textnormal{(Theorem 1, \cite{c1})}\label{cli}\\
Let $A=U-V$ be a proper splitting of $A\in {\R}^{m \times n}$. Then\\
$(a)~A=(I-VU^{\dagger})U$,\\
$(b)~A^{\dagger}=U^{\dagger}(I-VU^{\dagger})^{-1}$.
\end{thm}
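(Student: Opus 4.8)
The plan is to derive both statements from the range and null-space conditions defining a proper splitting, together with Theorem~\ref{bpthm}, which is already at our disposal.

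For part (a), the first step would be to establish the inclusion $N(U)\subseteq N(V)$. Since $A=U-V$ is proper we have $N(U)=N(A)$, so for $x\in N(U)$ it follows that $Vx=(U-A)x=Ux-Ax=0$, i.e.\ $x\in N(V)$. Recalling from the properties of the Moore--Penrose inverse listed above that $U^{\dagger}U=P_{R(U^{T})}$ is the orthogonal projector whose null space is $N(U)$, the inclusion $N(V)\supseteq N(U)$ forces $VU^{\dagger}U=V$. Substituting this into $(I-VU^{\dagger})U=U-VU^{\dagger}U$ gives $(I-VU^{\dagger})U=U-V=A$, which is (a).

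For part (b), I would argue in two steps. First I must check that $I-VU^{\dagger}$ is nonsingular, so that the right-hand side of (b) is meaningful; this is the only genuinely non-routine point. Here I would use the standard fact that the nonzero eigenvalues of $VU^{\dagger}$ and of $U^{\dagger}V$ coincide: if $I-VU^{\dagger}$ were singular, then $1\in\sigma(VU^{\dagger})$, hence $1\in\sigma(U^{\dagger}V)$, contradicting the nonsingularity of $I-U^{\dagger}V$ from Theorem~\ref{bpthm}$(b)$. Second, I would exploit the elementary identity
\[
U^{\dagger}(I-VU^{\dagger})=U^{\dagger}-U^{\dagger}VU^{\dagger}=(I-U^{\dagger}V)U^{\dagger}:
\]
multiplying it on the left by $(I-U^{\dagger}V)^{-1}$ and on the right by $(I-VU^{\dagger})^{-1}$ yields $(I-U^{\dagger}V)^{-1}U^{\dagger}=U^{\dagger}(I-VU^{\dagger})^{-1}$, and by Theorem~\ref{bpthm}$(c)$ the left-hand side is precisely $A^{\dagger}$. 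This gives (b).

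In summary, once the invertibility of $I-VU^{\dagger}$ is secured, everything reduces to the proper-splitting hypotheses on ranges and null spaces plus bookkeeping with Theorem~\ref{bpthm}. An alternative for (b) would be to verify the four Moore--Penrose equations directly for $X=U^{\dagger}(I-VU^{\dagger})^{-1}$, but that route seems more computational, so I would avoid it.
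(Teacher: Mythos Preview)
Your proof is correct. Note, however, that the paper does not actually supply its own proof of this theorem: it is stated in the preliminaries as a known result and attributed to Climent \textit{et al.}~\cite{c1}, so there is no proof in the paper to compare against. Your argument is a clean and natural one---using $N(V)\supseteq N(U)$ to get $VU^{\dagger}U=V$ for part~(a), and for part~(b) invoking the equality of nonzero spectra of $VU^{\dagger}$ and $U^{\dagger}V$ (already noted in the paper via $\rho(AB)=\rho(BA)$) together with Theorem~\ref{bpthm} to transfer the known formula $A^{\dagger}=(I-U^{\dagger}V)^{-1}U^{\dagger}$ to the right-hand form.
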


The next lemma shows a relation between the eigenvalues of
$U^{\dagger}V$ and $A^{\dagger}V$.

\begin{lem}\label{lemqeigval}(Lemma 2.7,  \cite{d4})\\
Let  $A=U-V$ be a  proper splitting of $A\in {\R}^{m\times n}$. Let
$\mu_i, ~1\leq i \leq n$ and $\lambda_j, ~1\leq j \leq n$ be the
eigenvalues of the matrices $U^{\dag}V$ and $A^{\dag}V$,
respectively. Then, for every $j$, there exists $i$ such that
$\lambda_j =\frac{\mu_i}{1-\mu_i}$, and for every $i$, there exists
$j$ such that $\mu_i=\frac{\lambda_j}{1+\lambda_j}.$
\end{lem}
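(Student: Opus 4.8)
The plan is to establish the correspondence between the eigenvalues of $U^{\dagger}V$ and those of $A^{\dagger}V$ by exploiting the identity $A^{\dagger}=(I-U^{\dagger}V)^{-1}U^{\dagger}$ from Theorem~\ref{bpthm}$(c)$. First I would note that both $U^{\dagger}V$ and $A^{\dagger}V$ are $n\times n$ matrices, and that the key algebraic relation is
\[
A^{\dagger}V=(I-U^{\dagger}V)^{-1}U^{\dagger}V.
\]
Setting $M=U^{\dagger}V$, this reads $A^{\dagger}V=(I-M)^{-1}M$. Since $(I-M)^{-1}$ and $M$ are simultaneously ``polynomials'' in $M$ (more precisely, both commute with $M$), the matrix $(I-M)^{-1}M$ is a rational function $f(M)$ with $f(t)=t/(1-t)$; note $f$ is well-defined on the spectrum of $M$ because $1$ is not an eigenvalue of $M$ by Theorem~\ref{bpthm}$(b)$.

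The main step is then the spectral mapping argument: if $\mu_i$ is an eigenvalue of $M=U^{\dagger}V$ with eigenvector $v$, then $(I-M)v=(1-\mu_i)v$, hence $(I-M)^{-1}v=\frac{1}{1-\mu_i}v$, and therefore
\[
A^{\dagger}Vv=(I-M)^{-1}Mv=\frac{\mu_i}{1-\mu_i}v,
\]
so $\frac{\mu_i}{1-\mu_i}$ is an eigenvalue of $A^{\dagger}V$. This already gives ``for every $i$ there exists $j$ with $\mu_i=\lambda_j/(1+\lambda_j)$'' once we invert the relation $\lambda=\mu/(1-\mu)$ to get $\mu=\lambda/(1+\lambda)$ (valid since $\lambda\neq -1$, which follows because $1-\mu_i\neq 0$ forces the denominator to be nonzero; more carefully, from $\lambda(1-\mu)=\mu$ we get $\lambda=\mu(1+\lambda)$, so if $\lambda=-1$ then $\mu\cdot 0=-1$, impossible). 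Conversely, to show ``for every $j$ there exists $i$'', I would run the argument backwards using the inverse relation $M=A^{\dagger}V(I+A^{\dagger}V)^{-1}$; this requires knowing that $I+A^{\dagger}V$ is invertible, equivalently that $-1\notin\sigma(A^{\dagger}V)$, which is exactly the computation just noted. Alternatively, and perhaps more cleanly, one can observe that the map $\mu\mapsto\mu/(1-\mu)$ is a bijection from $\mathbb{C}\setminus\{1\}$ onto $\mathbb{C}\setminus\{-1\}$, and since $\sigma(U^{\dagger}V)\subseteq\mathbb{C}\setminus\{1\}$ while $\sigma(A^{\dagger}V)\subseteq\mathbb{C}\setminus\{-1\}$, the spectral mapping in one direction plus a dimension/counting-with-multiplicity remark forces the correspondence both ways.

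The only mild obstacle is bookkeeping about eigenvalue multiplicities and the exact quantifier structure of the claim, together with confirming the denominators never vanish; once $I-U^{\dagger}V$ and $I+A^{\dagger}V$ are both known to be invertible, everything reduces to the elementary identity $A^{\dagger}V=(I-U^{\dagger}V)^{-1}U^{\dagger}V$ and the corresponding eigenvector computation. No positivity or Perron--Frobenius theory is needed here; this is purely a linear-algebraic lemma, and the proof is short.
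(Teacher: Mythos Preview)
The paper does not supply a proof of this lemma; it is quoted verbatim from \cite{d4} as a preliminary result, so there is no ``paper's own proof'' to compare against. Your argument is correct and is the standard one: from Theorem~\ref{bpthm}(c) you get $A^{\dagger}V=(I-M)^{-1}M$ with $M=U^{\dagger}V$, and since $(I-M)^{-1}$ commutes with $M$ the eigenvector computation gives the forward direction; for the reverse you correctly observe that $I+A^{\dagger}V=(I-M)^{-1}$ is invertible, so $M=(I+A^{\dagger}V)^{-1}A^{\dagger}V$ and the same eigenvector argument runs backwards. No gaps.
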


\section{Proper weak regular splittings of different types}
To make the article fairly self-contained, we shall briefly evoke
the notion of proper weak regular splittings of different types of
rectangular matrices and associated concepts in this section. To
prepare the setting,  we first need the following definition.

\begin{dfn} \textnormal{(Definition 1.1, \cite{miscal})}\\
 A proper splitting $A=U-V$  of $A \in {\R}^{m \times n}$ is called a proper
  regular splitting if  $U^{\dagger} \geq 0$ and $V \geq 0$.
\end{dfn}

Jena {\it et al.}  \cite{miscal} proved the following comparison
theorem for proper regular splittings in order to improve
convergence speed of the iteration scheme (\ref{introeq2}).

\begin{thm}\label{calcolo}\textnormal{(Theorem 3.3, \cite{miscal})}\\
Let $A=U_{1}-V_{1}=U_{2}-V_{2}$ be two proper regular splittings of
a semimonotone matrix $A \in {\R}^{m \times n}$. If $U_{1}^{\dagger}
\geq U_{2}^{\dagger}$, then $$\rho(U_{1}^{\dagger}V_{1}) \leq
\rho(U_{2}^{\dagger}V_{2})<1.$$
\end{thm}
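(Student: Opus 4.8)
The plan is to run a Perron--Frobenius argument directly on the iteration matrices. Write $T_i=U_i^{\dagger}V_i$ for $i=1,2$; since each $U_i^{\dagger}\ge 0$ and $V_i\ge 0$ we have $T_i\ge 0$. First I would record that $\rho(T_i)<1$. This is the known convergence theorem for proper regular splittings of a semimonotone matrix, but it also follows quickly: by Theorem \ref{bpthm} the matrix $I-T_i$ is nonsingular and $A^{\dagger}V_i=(I-T_i)^{-1}T_i$, so if $y$ is a Perron eigenvector of $T_i$ (Theorem \ref{perron}), $T_iy=\rho(T_i)y$ with $y\ge 0$, $y\ne 0$, then $A^{\dagger}V_i\,y=\tfrac{\rho(T_i)}{1-\rho(T_i)}y$ whenever $\rho(T_i)\ne 1$; the value $\rho(T_i)=1$ is impossible (it would be an eigenvalue of $T_i$ by Theorem \ref{perron}(a), making $I-T_i$ singular), and $\rho(T_i)>1$ is impossible because then $A^{\dagger}V_i\ge 0$ (here $A^{\dagger}\ge 0$ as $A$ is semimonotone, and $V_i\ge 0$) would have the negative eigenvalue $\tfrac{\rho(T_i)}{1-\rho(T_i)}$ attached to the nonnegative eigenvector $y$. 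Hence $\rho(T_i)<1$ for $i=1,2$.

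Next, set $\mu_1=\rho(T_1)$. If $\mu_1=0$ there is nothing to prove, so assume $\mu_1>0$ and choose, by Theorem \ref{perron}, a vector $y\ge 0$, $y\ne 0$, with $T_1y=\mu_1 y$; since $\mu_1>0$, $y=\mu_1^{-1}T_1y\in R(U_1^{\dagger}V_1)\subseteq R(U_1^{\dagger})=R(U_1^{T})=R(A^{T})$. The main point --- and the only step I expect to require a genuine idea --- is the claim $Ay\ge 0$. I would prove it by multiplying the eigen-equation $U_1^{\dagger}V_1y=\mu_1 y$ on the left by $U_1$: using $U_1U_1^{\dagger}=P_{R(U_1)}=P_{R(A)}$ this gives $P_{R(A)}(V_1y)=\mu_1 U_1y$. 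Since $U_1y\in R(U_1)=R(A)$ and $Ay\in R(A)$, the vector $V_1y=U_1y-Ay$ lies in $R(A)$, so $P_{R(A)}(V_1y)=V_1y$, whence $V_1y=\mu_1 U_1y$. Consequently $U_1y=\mu_1^{-1}V_1y\ge 0$ (because $V_1\ge 0$, $y\ge 0$, $\mu_1>0$) and then $Ay=U_1y-V_1y=(1-\mu_1)U_1y\ge 0$, using $\mu_1<1$ from the previous paragraph.

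With $Ay\ge 0$ in hand, the hypothesis $U_1^{\dagger}\ge U_2^{\dagger}$ yields $(U_1^{\dagger}-U_2^{\dagger})Ay\ge 0$, i.e.\ $U_2^{\dagger}Ay\le U_1^{\dagger}Ay$. For $i=1,2$ one has $T_iy=U_i^{\dagger}U_iy-U_i^{\dagger}Ay=P_{R(A^{T})}y-U_i^{\dagger}Ay=y-U_i^{\dagger}Ay$, where I used $U_i^{\dagger}U_i=P_{R(A^{T})}$ and $y\in R(A^{T})$; in particular $U_1^{\dagger}Ay=(1-\mu_1)y$. Hence
\[
T_2y \;=\; y-U_2^{\dagger}Ay \;\ge\; y-U_1^{\dagger}Ay \;=\; T_1y \;=\;\mu_1 y .
\]
Since $T_2=U_2^{\dagger}V_2\ge 0$ and $y\ge 0$, $y\ne 0$, Lemma \ref{lemma3.2}(i) gives $\mu_1\le\rho(T_2)$. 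Combined with the first paragraph this is exactly $\rho(U_1^{\dagger}V_1)=\mu_1\le\rho(U_2^{\dagger}V_2)<1$.

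A word on why I would avoid the more obvious route: one could instead use Lemma \ref{lemqeigval} to pass from each $U_i^{\dagger}V_i$ to $A^{\dagger}V_i$ (whose spectral radius is $\rho(U_i^{\dagger}V_i)/(1-\rho(U_i^{\dagger}V_i))$, an increasing function on $[0,1)$) and hope to compare $\rho(A^{\dagger}V_1)$ with $\rho(A^{\dagger}V_2)$ through Theorem \ref{var1}. But $A^{\dagger}V_1-A^{\dagger}V_2=A^{\dagger}(U_1-U_2)$, and since $A$ itself need not be entrywise nonnegative this difference is not visibly sign-definite; the advantage of the Perron-vector argument above is that it only needs nonnegativity of $Ay$ for the one specific eigenvector $y$, which the range identities of a proper splitting do deliver. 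So the single real obstacle is establishing $Ay\ge0$, and everything else is bookkeeping with the pseudoinverse identities of Theorems \ref{bpthm}--\ref{cli}.
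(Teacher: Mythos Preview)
Your argument is correct. The paper, however, does not supply its own proof of Theorem~\ref{calcolo}: the result is quoted verbatim as Theorem~3.3 of \cite{miscal} and used as a black box (notably in Section~5). So there is no in-paper proof to compare against; what can be said is that your proof is self-contained from the preliminaries the paper does state (Theorems~\ref{perron}, \ref{bpthm} and Lemma~\ref{lemma3.2}) and that it is in the spirit of the Perron-vector arguments used elsewhere in the paper (e.g.\ Lemma~\ref{main1} establishes $Ax\ge0$ and $Vx\ge0$ for a Perron vector in the type~II setting by essentially the same range manipulations you perform). One minor simplification: once $\mu_1>0$ you can observe directly that $R(V_1)\subseteq R(U_1)=R(A)$ (since $V_1=U_1-A$), which gives $P_{R(A)}V_1y=V_1y$ without the intermediate decomposition $V_1y=U_1y-Ay$.

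Regarding your closing remark, note that within the paper the statement is also formally a special case of the cited Theorem~\ref{d4}(ii) (take $V_1\ge0$, which holds for a proper regular splitting); so an alternative ``proof'' inside this paper would simply be to invoke that result. Your direct Perron argument is nonetheless the cleaner and more illuminating route, and your diagnosis of why the naive $A^{\dagger}V_i$ comparison via Theorem~\ref{var1} does not go through is accurate: the sign of $A^{\dagger}(V_1-V_2)=A^{\dagger}(U_1-U_2)$ is not controlled by $U_1^{\dagger}\ge U_2^{\dagger}$ alone.
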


We next reproduce  the definition of  a larger class of matrices
than the class of proper
  regular splittings.

\begin{dfn}\textnormal{(Definition 1.2, \cite{miscal})}\\
 A proper splitting $A=U-V$  of $A \in {\R}^{m \times n}$ is called a proper weak regular splitting
 if  $U^{\dagger} \geq 0$ and $U^{\dagger}V \geq 0$.
\end{dfn}

The statement mentioned before the above Definition is shown below
with an example.

\begin{ex}
Let $A=\left(\begin{array}{ccc}
2 &-1& 2\\
-3 & 5 & -3
\end{array}
\right)=\left(\begin{array}{ccc}
2 &-2& 2\\
-3 & 10 & -3
\end{array}
\right)-\left(\begin{array}{ccc}
0 &-1& 0\\
0 & 5 & 0
\end{array}
\right)=U-V$. Then $R(U)=R(A)$, $N(U)=N(A)$,
$U^{\dagger}=\left(\begin{array}{ccc}
0.3571 & 0.0714 \\
0.2143 & 0.1429 \\
0.3571 & 0.0714
\end{array}
\right) \geq 0$ and $U^{\dagger}V=\left(\begin{array}{ccc}
0 & 0 & 0\\
0 & 0.5 & 0\\
0 & 0 & 0
\end{array}
\right) \geq 0$. Thus $A=U-V$
 is a proper weak regular splitting, but not a proper regular splitting.
  This is due to the fact that $V \ngeq 0$.
\end{ex}

Berman and Plemmons \cite{bpcones} obtained the following
convergence result for a proper weak regular splitting without
specifying the name of this class.

\begin{thm}\textnormal{(Corollary 4, \cite{bpcones})}\label{m1}\\
Let $A=U-V$ be a proper weak regular splitting of $A\in {\R}^{m
\times n}$. Then $A^{\dagger} \geq 0$ if and only if
$\rho(U^{\dagger}V)<1$.
\end{thm}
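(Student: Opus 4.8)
The plan is to exploit the structural identities for proper splittings from Theorem \ref{bpthm} together with the Neumann-series characterization in Theorem \ref{var2}, moving between the quantities $U^{\dagger}V$, $A^{\dagger}$, and $A^{\dagger}V$ as convenient. Write $T = U^{\dagger}V \geq 0$, so that by Theorem \ref{bpthm}(b) the matrix $I-T$ is nonsingular and by Theorem \ref{bpthm}(c) we have $A^{\dagger} = (I-T)^{-1}U^{\dagger}$.

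For the forward direction, suppose $\rho(T) < 1$. Since $T \geq 0$, Theorem \ref{var2} gives $(I-T)^{-1} = \sum_{k=0}^{\infty} T^{k} \geq 0$. Combining this with $U^{\dagger} \geq 0$ (which holds because $A=U-V$ is a proper \emph{weak regular} splitting) and the identity $A^{\dagger} = (I-T)^{-1}U^{\dagger}$ from Theorem \ref{bpthm}(c), we conclude $A^{\dagger} \geq 0$ as a product of two nonnegative matrices.

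For the converse, suppose $A^{\dagger} \geq 0$. The plan is to use Lemma \ref{lemqeigval} to transfer the problem to the spectral radius of $A^{\dagger}V$. Since $A^{\dagger} \geq 0$ and $V = UT$... actually more directly: from $A = U(I-T)$ (Theorem \ref{bpthm}(a)) we get $V = U - A = U - U(I-T) = UT$, hence $A^{\dagger}V = A^{\dagger}UT = (I-T)^{-1}U^{\dagger}UT$; alternatively, using $A^{\dagger}A = A^{\dagger}(U - V)$ one obtains $A^{\dagger}V = A^{\dagger}U - A^{\dagger}A = A^{\dagger}U - P_{R(A^{T})}$, and since $R(U)=R(A)$ one checks $A^{\dagger}U = (I-T)^{-1}U^{\dagger}U$ is nonnegative (being a product of the nonnegative matrices $(I-T)^{-1}$, wait—we do not yet know $(I-T)^{-1}\geq 0$). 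So instead I would argue: $A^{\dagger}V \geq 0$ because $A^{\dagger} \geq 0$ and $V = A(A^{\dagger}A)^{-1}\cdots$—the cleanest route is $A^{\dagger}V = A^{\dagger}(U-A) $, and since $U^{\dagger}U = A^{\dagger}A$ one shows $A^{\dagger}U$ equals $A^{\dagger}(U U^{\dagger})U = A^{\dagger}A A^{\dagger} U \cdot(\ldots)$; in any case, a short computation using $AA^{\dagger}=P_{R(A)}$ and $R(U)=R(A)$ yields $A^{\dagger}V = A^{\dagger}V P_{R(A^{T})}$ and, because $V = U\,U^{\dagger}V = U T$ with $T \geq 0$ and $A^{\dagger}U \geq 0$ (the latter since $A^{\dagger} \geq 0$ and $R(U) = R(A)$ forces $A^{\dagger}U = A^{\dagger}(AA^{\dagger})U = (A^{\dagger}A)(A^{\dagger}U)\cdots$—this needs care). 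Granting $A^{\dagger}V \geq 0$, Theorem \ref{perron}(a) supplies an eigenvalue $\lambda = \rho(A^{\dagger}V) \geq 0$; by Lemma \ref{lemqeigval} there is an eigenvalue $\mu_i$ of $T$ with $\mu_i = \lambda/(1+\lambda) \in [0,1)$, and one must show this $\mu_i$ realizes $\rho(T)$. Since $t \mapsto t/(1-t)$ is increasing on $[0,1)$ and $T \geq 0$ has $\rho(T) = \mu_{\max}$ attained by a nonnegative eigenvalue (Theorem \ref{perron}), matching it through Lemma \ref{lemqeigval} to an eigenvalue of $A^{\dagger}V$ shows $\rho(T)/(1-\rho(T)) = \rho(A^{\dagger}V) < \infty$, forcing $\rho(T) < 1$.

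The main obstacle is the converse direction, specifically establishing that $A^{\dagger} \geq 0$ implies $A^{\dagger}V \geq 0$ and then correctly pairing the \emph{Perron eigenvalues} of $T$ and $A^{\dagger}V$ through Lemma \ref{lemqeigval}: the lemma only guarantees a correspondence of individual eigenvalues, so one must invoke the Perron--Frobenius structure (Theorem \ref{perron}) on both nonnegative matrices to be sure the spectral radii correspond to each other under the Möbius map $\mu \leftrightarrow \mu/(1-\mu)$, rather than to some other eigenvalue. Monotonicity of this map on $[0,1)$ together with the fact that $\rho(T) < 1$ would be vacuous/automatic if $A^{\dagger}V$ has finite spectral radius is the crux; care is needed because a priori we have not excluded $\rho(T) \geq 1$, in which case the correspondence $\mu_i = \lambda_j/(1+\lambda_j)$ could only hit values in $[0,1)$ from eigenvalues $\lambda_j$ that are nonnegative, and one must argue no eigenvalue of $T$ can lie at or beyond $1$.
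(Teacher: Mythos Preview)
The paper does not prove this statement; it is quoted without proof as Corollary~4 of \cite{bpcones}. Your forward direction is correct: with $T:=U^\dagger V\ge 0$ and $\rho(T)<1$, Theorem~\ref{var2} gives $(I-T)^{-1}\ge 0$, and then $A^\dagger=(I-T)^{-1}U^\dagger\ge 0$ by Theorem~\ref{bpthm}(c) and $U^\dagger\ge 0$.

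The converse, however, has a genuine gap. You never establish $A^\dagger V\ge 0$; your attempts (``this needs care'', ``Granting $A^\dagger V\ge 0$'') are circular, since $A^\dagger V=(I-T)^{-1}T$ and proving this is nonnegative is essentially proving $(I-T)^{-1}\ge 0$, which is the conclusion sought. Even if $A^\dagger V\ge 0$ were granted, the eigenvalue correspondence of Lemma~\ref{lemqeigval} does not on its own force $\rho(T)<1$: a nonnegative matrix may well have real eigenvalues below $-1$, so the Perron eigenvalue $\rho(T)$ of $T$ could correspond, via $\mu=\lambda/(1+\lambda)$, to some $\lambda<-1$ rather than to $\rho(A^\dagger V)$. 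The standard route avoids $A^\dagger V$ entirely. Take a \emph{left} Perron vector $y\gneq 0$ with $y^T T=\rho(T)\,y^T$ (apply Theorem~\ref{perron} to $T^T$); Theorem~\ref{bpthm}(b) rules out $\rho(T)=1$. If $\rho(T)>1$, then
\[
0 \;\le\; y^T A^\dagger \;=\; y^T(I-T)^{-1}U^\dagger \;=\; \frac{1}{1-\rho(T)}\,y^T U^\dagger \;\le\; 0,
\]
so $y^T U^\dagger=0$, i.e.\ $y\in R(U^\dagger)^\perp=R(U^T)^\perp=N(U)$. But $y\in R(T^T)\subseteq R(V^T)\subseteq R(U^T)$, the last inclusion because $N(U)=N(A)\subseteq N(V)$. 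Hence $y\in N(U)\cap R(U^T)=\{0\}$, a contradiction; therefore $\rho(T)<1$.
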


The next comparison result  is proved by Mishra \cite{d4}, and will
be used in Section 5.

\begin{thm}\textnormal{(Theorem 3.4, \cite{d4})}\label{d4}\\
Let $A=U_{1}-V_{1}=U_{2}-V_{2}$ be two proper weak regular
splittings of a semimonotone matrix
$A\in{\R}^{m \times n}$. If either of the following cases holds,\\
$(i)~V_{2} \geq V_{1}$\\
$(ii)~U_{1}^{\dagger} \geq U_{2}^{\dagger},~V_{1} \geq 0$\\
$(iii)~U_{1}^{\dagger} \geq U_{2}^{\dagger}\geq 0$ and
 row sums of $U_{2}^{\dagger}$ are positive, $V_{2} \geq 0,$\\
then $$\rho(U_{1}^{\dagger}V_{1}) \leq
\rho(U_{2}^{\dagger}V_{2})<1.$$
\end{thm}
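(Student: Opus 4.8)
The plan is to dispose of the strict inequality at once, reduce the comparison to one between $\rho(A^\dagger V_1)$ and $\rho(A^\dagger V_2)$, and then settle the three cases by a Perron--Frobenius argument, feeding the hypothesis on the $U_i^\dagger$'s into the last step through an identity expressing $U_i^\dagger$ in terms of $A^\dagger$. Since $A$ is semimonotone, $A^\dagger\ge 0$, and since each $A=U_i-V_i$ is a proper weak regular splitting, Theorem~\ref{m1} gives $\rho(U_i^\dagger V_i)<1$ for $i=1,2$; hence only $\rho(U_1^\dagger V_1)\le\rho(U_2^\dagger V_2)$ needs proof. Put $W_i:=U_i^\dagger V_i\ge 0$ and $S_i:=A^\dagger V_i$. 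By Theorem~\ref{bpthm}, $A^\dagger=(I-W_i)^{-1}U_i^\dagger$, so $S_i=(I-W_i)^{-1}W_i$; as $\rho(W_i)<1$, Theorem~\ref{var2} gives $(I-W_i)^{-1}=\sum_{k\ge 0}W_i^k\ge 0$, whence $S_i\ge 0$ and $I+S_i=(I-W_i)^{-1}$ is a nonnegative matrix with inverse $I-W_i$. Combining Lemma~\ref{lemqeigval} with Theorem~\ref{perron} and the monotonicity of $t\mapsto t/(1-t)$ on $[0,1)$ one gets $\rho(S_i)=\rho(W_i)/(1-\rho(W_i))$, i.e.\ $\rho(W_i)=\rho(S_i)/(1+\rho(S_i))$; since $t\mapsto t/(1+t)$ is strictly increasing on $[0,\infty)$, it suffices to prove $\rho(S_1)\le\rho(S_2)$, with $S_1,S_2\ge 0$.

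Case (i) is immediate: $V_2\ge V_1$ and $A^\dagger\ge 0$ give $S_2-S_1=A^\dagger(V_2-V_1)\ge 0$, so $S_2\ge S_1\ge 0$ and Theorem~\ref{var1} applies. For cases (ii) and (iii) the relation $V_1-V_2=U_1-U_2$ makes direct substitutions circular, so the hypothesis $U_1^\dagger\ge U_2^\dagger$ must be injected through the identity $U_i^\dagger=(I-W_i)A^\dagger=(I+S_i)^{-1}A^\dagger$ (again Theorem~\ref{bpthm}); it reads $\big[(I+S_1)^{-1}-(I+S_2)^{-1}\big]A^\dagger\ge 0$. In case (ii), right-multiplying by $V_1\ge 0$ gives $(I+S_1)^{-1}S_1\ge(I+S_2)^{-1}S_1$; evaluating at a nonnegative Perron eigenvector $v$ of $S_1$ (Theorem~\ref{perron}), say $S_1v=\beta_1v$ with $\beta_1=\rho(S_1)$, reduces the left side to $\tfrac{\beta_1}{1+\beta_1}v$, and then left-multiplying by the nonnegative matrix $I+S_2$ (dividing by $\beta_1$, the case $\beta_1=0$ being trivial) collapses the inequality to $S_2v\ge\beta_1v$; Lemma~\ref{lemma3.2}(i) yields $\rho(S_2)\ge\beta_1=\rho(S_1)$. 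In case (iii) one instead right-multiplies by $V_2\ge 0$, obtaining $(I+S_1)^{-1}S_2\ge(I+S_2)^{-1}S_2$, evaluates at a nonnegative Perron eigenvector $v$ of $S_2$, left-multiplies by $I+S_1\ge 0$, and arrives at $S_1v\le\rho(S_2)\,v$; Lemma~\ref{lemma3.2}(ii) then finishes, \emph{provided} $v>0$.

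The routine part is the bookkeeping with the projector identities $U_iU_i^\dagger=P_{R(A)}$, $U_i^\dagger U_i=P_{R(A^T)}$, $S_iP_{R(A^T)}=S_i$ and $V_iz=0$ for $z\in N(A)$, which justify the factorizations and cancellations above. The one genuinely delicate point, and the main obstacle, is the strict positivity of the Perron eigenvector $v$ of $S_2=A^\dagger V_2$ needed in case (iii): this is precisely what the extra hypotheses ``$U_2^\dagger\ge 0$ with positive row sums'' and ``$V_2\ge 0$'' are there to supply. They force $A^\dagger=(I+S_2)U_2^\dagger$ to have positive row sums, and combined with $v=\rho(S_2)^{-1}A^\dagger(V_2v)$, $V_2v\gneq 0$, and a support/irreducibility argument this should give $v>0$; without such a hypothesis one has only $v\gneq 0$, Lemma~\ref{lemma3.2}(ii) is unavailable, and the comparison can genuinely fail --- which is exactly why case (iii) requires more than case (ii), where Lemma~\ref{lemma3.2}(i) works with merely $v\gneq 0$. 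Making this positivity step rigorous is the heart of the argument; the rest follows the template above.
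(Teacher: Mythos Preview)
The theorem is quoted from \cite{d4} and is not proved in the present paper, so there is no in-paper argument to compare against; I can only assess your proposal on its own merits. Your treatment of cases (i) and (ii) is correct and self-contained: the reduction to comparing $\rho(A^{\dagger}V_1)$ with $\rho(A^{\dagger}V_2)$ via Lemma~\ref{lemqeigval} and the monotone bijection $t\mapsto t/(1+t)$, followed by Theorem~\ref{var1} in~(i) and by the Perron argument with Lemma~\ref{lemma3.2}(i) in~(ii), goes through cleanly (the side case $\beta_1=0$ is indeed trivial).

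Case (iii), however, has a genuine gap that you yourself flag but do not close. After arriving at $S_1v\le\rho(S_2)\,v$ for a Perron eigenvector $v\gneq 0$ of $S_2$, you need $v>0$ to invoke Lemma~\ref{lemma3.2}(ii). Your sketch --- ``$A^{\dagger}$ has positive row sums, $v=\rho(S_2)^{-1}A^{\dagger}(V_2v)$ with $V_2v\gneq 0$, plus a support/irreducibility argument'' --- does not work as stated. Positive row sums of a nonnegative matrix $M$ guarantee $My>0$ only for \emph{strictly} positive $y$; for $y\gneq 0$ the conclusion can fail (take $M=I$). Hence from $V_2v\gneq 0$ you cannot deduce $A^{\dagger}(V_2v)>0$, and there is no irreducibility hypothesis in (iii) to fall back on. Nor does the obvious alternative through the strictly positive test vector $w=A^{\dagger}e>0$ (which the row-sum condition does produce) settle the comparison by itself: one only obtains $W_1w\le W_2w$, and this alone does not force $\rho(W_1)\le\rho(W_2)$. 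As written, part (iii) remains open; the row-sum hypothesis has to be exploited in a sharper way than your outline provides.
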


One can find that, there exists a convergent splitting which is not
a  proper weak regular splitting. To address convergence theory in
this situation, we now have the following definition from \cite{c1}
where the authors call it as a weak nonnegative splitting of second
type. However, we call here as  a proper weak regular
 splitting of type II.

\begin{dfn}\textnormal{(Definition 2, \cite{c1})}\\
 A proper splitting $A=U-V$  of $A \in {\R}^{m \times n}$ is called a proper weak regular splitting of type II
  if  $U^{\dagger} \geq 0$ and $VU^{\dagger} \geq 0$.
\end{dfn}

Note that the proper weak regular splitting of type I is same as the
proper weak regular splitting.
 We next present an example of a matrix which has a convergent proper weak regular splitting
 of type II but not of type I.

\begin{ex}
Let $A=\left( \begin{array}{ccc}
3& -3 & 6\\
3 & 6 & -3\\
\end{array}
\right)=\left(
\begin{array}{ccc}
5 & -5 & 10\\
4 & 8 & -4\\
\end{array}
\right) -\left( \begin{array}{ccc}
2 & -2 & 4\\
1 & 2 & -1\\
\end{array}
\right)=U-V.$
 Then $R(U)=R(A),~N(U)=N(A),~ U^{\dagger}=\left( \begin{array}{ccc}
  0.0667 &  0.0833\\
   0 &   0.0833\\
    0.0667  &  0
 \end{array} \right) \geq 0$ and $VU^{\dagger}=\left( \begin{array}{ccc}
 0.4  &  0\\
  0  &  0.25
 \end{array} \right) \geq 0$. But $U^{\dagger}V=\left( \begin{array}{ccc}
 0.2167   & 0.0333   & 0.1833\\
    0.0833  &  0.1667 &  -0.0833\\
    0.1333 &  -0.1333  &  0.2667
 \end{array} \right) \ngeq 0$ . Hence $A=U-V$ is a proper weak regular splitting of type II
  but not type I with
  $\rho(U^{\dagger}V)=0.4$.
\end{ex}

Another remark drawn from the above example is that it cannot be
ensured convergence of all splittings by the known convergence
results for the proper weak regular splitting of type I. To overcome
this issue, Mishra and Sivakumar \cite{misoam} proved the following
convergence result for the proper weak
  regular splitting of type II. Note that the same authors call it as the weak pseudo regular
  splitting, but we call it here as the proper weak regular splitting of type II.

\begin{thm}\label{m2} \textnormal{(Remark 3.5, \cite{misoam})}\\
Let $A=U-V$ be a proper weak regular splitting of type II of $A\in
{\R}^{m \times n}$. Then $A^{\dagger} \geq 0$ if and only if
$\rho(U^{\dagger}V)<1$.
\end{thm}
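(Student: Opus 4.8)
The plan is to reduce everything to the single matrix $W:=VU^{\dagger}$, which is the natural nonnegative object in the type II setting. Since $\rho(XY)=\rho(YX)$ whenever both products are defined, $\rho(U^{\dagger}V)=\rho(VU^{\dagger})=\rho(W)$, so it suffices to decide when $\rho(W)<1$. Because $A=U-V$ is a proper splitting, Theorem \ref{cli} tells us that $I-VU^{\dagger}=I-W$ is nonsingular (in particular $1$ is not an eigenvalue of $W$) and that $A^{\dagger}=U^{\dagger}(I-W)^{-1}$. The hypothesis that the splitting is proper weak regular of type II is exactly $U^{\dagger}\geq 0$ and $W\geq 0$; these two sign conditions, together with this identity for $A^{\dagger}$, will do all the work.

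For the ``if'' direction I would argue directly: if $\rho(U^{\dagger}V)<1$ then $\rho(W)<1$, and since $W\geq 0$, Theorem \ref{var2} gives $(I-W)^{-1}=\sum_{k=0}^{\infty}W^{k}\geq 0$; multiplying on the left by $U^{\dagger}\geq 0$ yields $A^{\dagger}=U^{\dagger}(I-W)^{-1}\geq 0$.

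The ``only if'' direction is where the real work lies, and the plan is a Perron--Frobenius sign argument. Suppose $A^{\dagger}\geq 0$ but $\rho(W)\geq 1$; since $I-W$ is nonsingular this forces $\lambda:=\rho(W)>1$. As $W\geq 0$, Theorem \ref{perron} supplies an eigenvector $x\geq 0$, $x\neq 0$, with $Wx=\lambda x$, hence $(I-W)^{-1}x=\tfrac{1}{1-\lambda}x$ and therefore $A^{\dagger}x=U^{\dagger}(I-W)^{-1}x=\tfrac{1}{1-\lambda}U^{\dagger}x$. Now one reads off signs: the left-hand side is $\geq 0$ because $A^{\dagger}\geq 0$ and $x\geq 0$, while the right-hand side is $\leq 0$ because $U^{\dagger}x\geq 0$ and the scalar $\tfrac{1}{1-\lambda}$ is negative; so $A^{\dagger}x=0$, and since the scalar is nonzero also $U^{\dagger}x=0$. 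But then $Wx=VU^{\dagger}x=0$, contradicting $Wx=\lambda x$ with $\lambda>0$ and $x\neq 0$. Hence $\rho(U^{\dagger}V)=\rho(W)<1$.

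I expect the only genuinely delicate point to be choosing the right factorization: the type I proof (Corollary 4 of \cite{bpcones}) naturally runs through $A=U(I-U^{\dagger}V)$ and $A^{\dagger}=(I-U^{\dagger}V)^{-1}U^{\dagger}$, but here $U^{\dagger}V$ need not be nonnegative, so one must instead pivot to the ``right-hand'' factorization $A=(I-VU^{\dagger})U$, $A^{\dagger}=U^{\dagger}(I-VU^{\dagger})^{-1}$ from Theorem \ref{cli}, and then the equality $\rho(U^{\dagger}V)=\rho(VU^{\dagger})$ is what transfers the conclusion back to the iteration matrix $U^{\dagger}V$. Everything else is routine once $W=VU^{\dagger}$ is taken as the basic object.
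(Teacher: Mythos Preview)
Your argument is correct. The paper does not give its own proof of this statement; it is quoted from \cite{misoam} (Remark~3.5 there) and stated without argument, so there is nothing in the paper to compare against. Your reduction to $W=VU^{\dagger}$ via Theorem~\ref{cli}(b) and the identity $\rho(U^{\dagger}V)=\rho(VU^{\dagger})$ is exactly the natural type~II analogue of the type~I proof, and both directions go through cleanly: the ``if'' part is the Neumann series from Theorem~\ref{var2}, and the ``only if'' part is a standard Perron sign contradiction. The one step you might make more explicit is why $\rho(W)\geq 1$ together with nonsingularity of $I-W$ forces $\rho(W)>1$: this uses that $W\geq 0$ implies $\rho(W)$ is itself an eigenvalue (Theorem~\ref{perron}(a)), so $\rho(W)=1$ would make $I-W$ singular. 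With that remark added, the proof is complete and self-contained.
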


Observe that Theorem \ref{m1} and Theorem \ref{m2} together extend
Theorem 3.4 (i), \cite{e1} for rectangular matrices.   The first
main result, presented below partially generalizes the other part of
Theorem 3.4, \cite{e1}.

\begin{lem}\label{main1}
 Let  $A=U-V$ be a proper weak regular splitting of type II of a
semimonotone matrix $A \in {\R}^{m \times n}$. Suppose  that
$\rho(U^{\dagger}V)> 0$. Then there exists a  vector $x \gneq 0$
such that $U^{\dagger}Vx=\rho(U^{\dagger}V)x$, $Ax \gneq 0$ and
$Vx\gneq 0$.
\end{lem}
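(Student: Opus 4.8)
The plan is to obtain a Perron eigenvector of the nonnegative matrix $VU^{\dagger}$, transport it to an eigenvector of $U^{\dagger}V$ via the substitution $x=U^{\dagger}y$, and then read off the signs of $Vx$ and $Ax$ using the range identities of a proper splitting together with the strict bound $\rho(U^{\dagger}V)<1$.

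First I would record the bound $\rho:=\rho(U^{\dagger}V)<1$. Since $A$ is semimonotone, $A^{\dagger}\geq 0$, and $A=U-V$ is a proper weak regular splitting of type II, this is exactly Theorem~\ref{m2}. Next, note that $VU^{\dagger}$ is a well-defined $m\times m$ matrix with $VU^{\dagger}\geq 0$ (the defining property of type II), and $\rho(VU^{\dagger})=\rho(U^{\dagger}V)=\rho>0$ by the identity $\rho(AB)=\rho(BA)$. Hence Theorem~\ref{perron}$(b)$ furnishes an eigenvector $y\gneq 0$ with $VU^{\dagger}y=\rho y$. (We must work with $VU^{\dagger}$ rather than $U^{\dagger}V$ directly, since $U^{\dagger}V$ need not be nonnegative for a type II splitting.) Now set $x=U^{\dagger}y$. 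Then $x\geq 0$ because $U^{\dagger}\geq 0$ and $y\geq 0$; moreover $Vx=VU^{\dagger}y=\rho y\gneq 0$ since $\rho>0$. In particular $Vx\neq 0$, so $x\neq 0$, i.e. $x\gneq 0$; and $U^{\dagger}Vx=U^{\dagger}(\rho y)=\rho U^{\dagger}y=\rho x$, which is the asserted eigenvalue equation. This already settles the eigenvector claim and $Vx\gneq 0$.

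The remaining task is $Ax\gneq 0$, and here the decisive observation is that $y\in R(A)$. Indeed, writing $V=U-A$ gives $VU^{\dagger}=UU^{\dagger}-AU^{\dagger}=P_{R(A)}-AU^{\dagger}$ (using $UU^{\dagger}=P_{R(U)}=P_{R(A)}$ because $R(U)=R(A)$), and both $P_{R(A)}$ and $AU^{\dagger}$ have range contained in $R(A)$, so $R(VU^{\dagger})\subseteq R(A)$. Therefore $\rho y=VU^{\dagger}y\in R(A)$, and since $\rho\neq 0$ we conclude $y\in R(A)$, that is $P_{R(A)}y=y$. Consequently $Ux=UU^{\dagger}y=P_{R(A)}y=y$, whence
\[
Ax=Ux-Vx=y-\rho y=(1-\rho)\,y\gneq 0,
\]
the factor $1-\rho>0$ being precisely what the semimonotonicity of $A$ supplied through Theorem~\ref{m2}. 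This completes the argument.

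The one genuinely delicate step is the claim $y\in R(A)$: without it one only obtains $Ax=(1-\rho)P_{R(A)}y$, and the orthogonal projection of a nonnegative vector need not be nonnegative, so the sign of $Ax$ would be in doubt. Everything else is routine bookkeeping with $U^{\dagger}\geq 0$, $VU^{\dagger}\geq 0$, the spectral-radius symmetry $\rho(AB)=\rho(BA)$, and the basic range/projection identities for $A^{\dagger}$ and $U^{\dagger}$.
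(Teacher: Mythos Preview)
Your proof is correct and follows essentially the same route as the paper's: both take a Perron eigenvector $y$ of the nonnegative matrix $VU^{\dagger}$, set $x=U^{\dagger}y$, use $y\in R(U)=R(A)$ to get $Ux=y$, and then read off $U^{\dagger}Vx=\rho x$, $Vx=\rho y\gneq 0$, and $Ax=(1-\rho)y\gneq 0$. The paper phrases the last two conclusions slightly differently---invoking $A=(I-VU^{\dagger})U$ from Theorem~\ref{cli} for $Ax$ and deducing $Vx\neq 0$ at the end from $Ax\neq 0$---but your direct computations $Vx=\rho y$ and $Ax=Ux-Vx=(1-\rho)y$ are a touch cleaner.
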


\begin{proof}
We have $VU^{\dagger} \geq 0$. By Theorem \ref{perron}, there exists
an  eigenvector $z \geq 0$ such that
\begin{equation}\label{eq1}
VU^{\dagger}z=\rho(VU^{\dagger})z.
\end{equation}
Therefore, $z \in  R(V) \subseteq R(U)$. Define $x=U^{\dagger}z$.
Then $x \geq 0$. Pre-multiplying (\ref{eq1}) by $U^{\dagger}$, we
obtain
\begin{equation}\label{eq2}
U^{\dagger}Vx=\rho(VU^{\dagger})x.
\end{equation}
Suppose that $x =0$. Then $U^{\dagger}z=0$ so that $z\in R(U)\cap
N(U^T)$. Thus, $z=0$, a contradiction. So $x \neq 0$. Now we prove
the inequality $Ax \geq 0$. Theorem \ref{cli} and Theorem \ref{m2}
yield
$$0 \leq
(1-\rho(VU^{\dagger}))z=(I-VU^{\dagger})z=(I-VU^{\dagger})Ux= Ax.
$$ Clearly, $Ax \neq 0$ otherwise $Ax=0$ implies $x = 0$, a contradiction. From (\ref{eq1}), we have $Vx\geq 0$.
Pre-multiplying (\ref{eq2}) by $U$, we get
$Vx=\rho(U^{\dagger}V)Ux$, i.e.,  $\displaystyle
Ux=\frac{Vx}{\rho(U^{\dagger}V)} $. Therefore, we get
$$0\leq Ax=U(I-U^{\dagger}V)x=(1-\rho(U^{\dagger}V))Ux=\frac{(1-\rho(U^{\dagger}V))}{\rho(U^{\dagger}V)}Vx .$$
So $Vx \neq 0$. If $Vx=0$, then $Ax=0$, again a contradiction.
\end{proof}

%
%

Convergence of an iteration scheme is usually accelerated by a
preconditioner.  It is a square matrix $Q$ of order $m$ which on
pre-multiplication makes the convergence of the iterative method for
the system with the matrix $QA$
  faster than the original system with the matrix $A$.
Hence, instead of solving (\ref{introeq1}), we solve
 $$QAx = Qb, i.e., A_1x=c.$$
 The method of finding of an effective preconditioner $Q$ for general problems is a mathematical challenge.
  Nevertheless, many specific problems are being successfully solved
  using
preconditioned iterative solvers.  But the problem is how to choose
an effective preconditioner. This is settled next, with a comparison
result of the rate of convergence of two different linear systems.
  The proof adopts  similar techniques as used in Theorem 3.5,
  \cite{e1}.

\begin{thm}\label{main2}
 Let $A_{1}, A_{2} \in {\R}^{m \times n}$. Let $A_{1}=U_{1}-V$ and $A_{2}=U_{2}-V$ be two
 proper weak regular splittings
  of different types. Suppose  that
$\rho(U_1^{\dagger}V)> 0$ and $\rho(U_2^{\dagger}V)> 0$. If $V \neq
0$ and $A_{2}^{\dagger}
> A_{1}^{\dagger}\geq 0$, then
   $$\rho(U_{1}^{\dagger}V)<\rho(U_{2}^{\dagger}V)<1.$$
\end{thm}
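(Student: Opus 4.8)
The plan is to transmit the strict inequality $A_2^{\dagger}>A_1^{\dagger}$ to the iteration matrices by testing the nonnegative resolvent‑type matrix $A_2^{\dagger}V$ against a suitable nonnegative eigenvector produced from $A_1$; the hypothesis that the two splittings are of \emph{different} types is exactly what makes such a vector available.

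To begin, observe that $A_2^{\dagger}>A_1^{\dagger}\geq 0$ forces $A_1^{\dagger},A_2^{\dagger}\geq 0$, so Theorem \ref{m1} (for a type I splitting) and Theorem \ref{m2} (for a type II splitting) give $\rho(U_i^{\dagger}V)<1$; with the standing hypotheses $\rho(U_i^{\dagger}V)>0$ we set $\sigma_i:=\rho(U_i^{\dagger}V)\in(0,1)$, $i=1,2$. Since $A_i=U_i-V$ is proper, Theorem \ref{bpthm} gives $A_i^{\dagger}V=(I-U_i^{\dagger}V)^{-1}U_i^{\dagger}V=\sum_{k\geq 1}(U_i^{\dagger}V)^{k}$, the Neumann series converging because $\sigma_i<1$. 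By transposition I may assume, in addition, that $A_1=U_1-V$ is of type II and $A_2=U_2-V$ is of type I: transposing a proper weak regular splitting of type II of $A$ yields a proper weak regular splitting of type I of $A^{T}$, preserves nonnegativity and the strict inequality $A_2^{\dagger}>A_1^{\dagger}$, and leaves each $\rho(U_i^{\dagger}V)$ unchanged, so the other assignment of types reduces to this one.

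Now I would argue as follows. Apply Lemma \ref{main1} to the type II splitting $A_1=U_1-V$ of the semimonotone matrix $A_1$ (legitimate since $\sigma_1>0$): it produces $y\gneq 0$ with $U_1^{\dagger}Vy=\sigma_1 y$ and $Vy\gneq 0$. By the Neumann series, $A_1^{\dagger}Vy=\frac{\sigma_1}{1-\sigma_1}\,y$. Since $A_2=U_2-V$ is of type I with $A_2^{\dagger}\geq 0$ and $\sigma_2<1$, the matrix $A_2^{\dagger}V=\sum_{k\geq 1}(U_2^{\dagger}V)^{k}$ is entrywise nonnegative. As $A_2^{\dagger}-A_1^{\dagger}>0$ entrywise and $Vy$ is nonzero and nonnegative, $(A_2^{\dagger}-A_1^{\dagger})(Vy)$ is strictly positive in every component, so $$A_2^{\dagger}Vy=A_1^{\dagger}Vy+(A_2^{\dagger}-A_1^{\dagger})(Vy)>\frac{\sigma_1}{1-\sigma_1}\,y$$ in every component. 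Applying Lemma \ref{lemma3.2}(i) to the nonnegative matrix $A_2^{\dagger}V$ and the vector $y\gneq 0$ yields $\rho(A_2^{\dagger}V)>\frac{\sigma_1}{1-\sigma_1}$. Because $A_2^{\dagger}V\geq 0$, its spectral radius is an eigenvalue of it (Theorem \ref{perron}), so by Lemma \ref{lemqeigval} there is an eigenvalue $\mu$ of $U_2^{\dagger}V$ with $\mu=\frac{\rho(A_2^{\dagger}V)}{1+\rho(A_2^{\dagger}V)}>\frac{\sigma_1/(1-\sigma_1)}{1+\sigma_1/(1-\sigma_1)}=\sigma_1$, where the inequality uses that $s\mapsto s/(1+s)$ is increasing. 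Hence $\rho(U_2^{\dagger}V)\geq\mu>\sigma_1=\rho(U_1^{\dagger}V)$, and since also $\rho(U_2^{\dagger}V)=\sigma_2<1$, the theorem follows.

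The hardest part is dealing with the clause "of different types", because $V$ is not sign‑definite and so one cannot compare $A_1^{\dagger}V$ and $A_2^{\dagger}V$ directly; the inequality $A_2^{\dagger}>A_1^{\dagger}$ has to be carried by a nonnegative vector, and I see no way to get one without letting the type II splitting provide (via Lemma \ref{main1}) a vector with $Vx\gneq 0$ while the type I splitting provides the nonnegativity of $A^{\dagger}V$ needed to invoke Lemma \ref{lemma3.2}. A secondary subtlety is that Lemma \ref{lemma3.2}(i) produces a \emph{strict} spectral inequality only from a componentwise strict vector inequality; this is exactly supplied by the strictness of $A_2^{\dagger}>A_1^{\dagger}$ (a positive matrix applied to a nonzero nonnegative vector is strictly positive), whereas under the weaker assumption $A_2^{\dagger}\geq A_1^{\dagger}$ one would only obtain $\rho(U_1^{\dagger}V)\leq\rho(U_2^{\dagger}V)$ without a further irreducibility hypothesis.
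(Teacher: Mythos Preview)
Your proof is correct and follows essentially the same strategy as the paper's: apply Lemma~\ref{main1} to the type~II splitting to obtain a nonnegative eigenvector $y$ with $Vy\gneq 0$, push the strict inequality $A_2^{\dagger}>A_1^{\dagger}$ through $Vy$ to get a strict componentwise inequality $A_2^{\dagger}Vy>\rho(G_1)y$, and finish with Lemma~\ref{lemma3.2}. The only difference is cosmetic: the paper treats the two assignments of ``type~I / type~II'' as separate cases (using part~(ii) of Lemma~\ref{lemma3.2} in one and part~(i) in the other), whereas you collapse them to a single case by the transposition symmetry $(U^T)^{\dagger}V^T=(VU^{\dagger})^T$; your detour through Lemma~\ref{lemqeigval} at the end could also be replaced by the paper's direct identity $\rho(A_i^{\dagger}V)=\sigma_i/(1-\sigma_i)$, but either route is fine.
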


\begin{proof}
By Theorem \ref{m1} and Theorem \ref{m2}, it follows that
$\rho(U_{i}^{\dagger}V)<1$ for each $i=1,2$. Define
$G_{1}=A_{1}^{\dagger}V,~G_{2}=A_{2}^{\dagger}V,~
\tilde{G_{1}}=VA_{1}^{\dagger}$ and $\tilde{G_{2}} =VA_{2}^{\dagger}$.
Using Theorem \ref{bpthm} (c) and Theorem \ref{cli} (b), we have
$$~~~~~~G_{i}=A_{i}^{\dagger}V=(I-U_{i}^{\dagger}V)^{-1}U_{i}^{\dagger}V,~i=1,2$$
$$\text{and}~~\tilde{G_{i}}=VA_{i}^{\dagger}=VU_{i}^{\dagger}(I-VU_{i}^{\dagger})^{-1},~i=1,2.$$
Let us first assume that $A_{1}=U_{1}-V$ is a proper weak regular splitting of type I and $A_{2}=U_{2}-V$
 is a proper weak regular splitting of type II. Then
$G_{1}$ and
 $\tilde{G_{2}}$ are non-negative matrices and
$$\displaystyle \rho(G_{i})=\rho(\tilde{G_{i}})=\frac{\rho(U_{i}^{\dagger}V)}{1-\rho(U_{i}^{\dagger}V)}
=\frac{\rho(VU_{i}^{\dagger})}{1-\rho(VU_{i}^{\dagger})} \quad
\text{for each}~i=1,2.$$ We only need to show that
$\rho(G_{2})<\rho(G_{1})$. By Lemma \ref{main1}, there exists an
eigenvector $x\geq 0,$ such that
 $U_{2}^{\dagger}Vx=\rho(U_{2}^{\dagger}V)x$ and $Vx \geq 0$. Using $A_{2}^{\dagger}>A_{1}^{\dagger}\geq 0$,
  we get
\begin{equation}\label{maineq1}
\rho(G_2)x=G_{2}x=A_{2}^{\dagger}Vx>A_{1}^{\dagger}Vx=G_{1}x.
\end{equation}
Hence, by Lemma \ref{lemma3.2} $(ii)$, the strict inequality
$\rho(G_{1})<\rho(G_{2})$ follows directly. If $A_{1}=U_{1}-V$ is a
proper weak regular splitting of type II and $A_{2}=U_{2}-V$ is a proper weak
regular splitting of type I, then $\tilde{G_{1}}$ and $G_{2}$ are
 non-negative matrices. Again, by Lemma \ref{main1}, there exists an eigenvector $z\geq 0$ such that
  $U_{1}^{\dagger}Vz=\rho(U_{1}^{\dagger}V)z$ and $Vz \geq 0$. Thus
\begin{equation}\label{maineq2}
G_{2}z=A_{2}^{\dagger}Vz>A_{1}^{\dagger}Vz=G_{1}z=\rho(G_{1})z.
\end{equation}
 The strict inequality $\rho(G_{1})<\rho(G_{2})$ then follows from Lemma \ref{lemma3.2} $(i)$
  which yields the desired claim.
\end{proof}

 In the above result, one cannot drop the assumption $A_{2}^{\dagger} > A_{1}^{\dagger}\geq 0$ which can be seen
  from the example illustrated next.

\begin{ex}
Let $A_{1}=\left(\begin{array}{ccc}
7 & -7/2 & 7\\
0 & 1 & 0
\end{array}
\right)=\left(\begin{array}{ccc}
8 & -4 & 8\\
0 & 2 & 0
\end{array}
\right)-\left(\begin{array}{ccc}
1 & -1/2 & 1\\
0 & 1 & 0
\end{array}
\right)=U_{1}-V$ and $A_{2}=\left(\begin{array}{ccc}
3 & -3/2 & 3\\
0 & 1 & 0
\end{array}
\right)=\left(\begin{array}{ccc}
4 & -2 & 4\\
0 & 2 & 0
\end{array}
\right)-\left(\begin{array}{ccc}
1 & -1/2 & 1\\
0 & 1 & 0
\end{array}
\right)=U_{2}-V$. Then $U_{1}^{\dagger}=\left(\begin{array}{ccc}
 0.0625  &  0.1250\\
         0 &   0.5000\\
    0.0625  &  0.1250
\end{array}
\right) \geq 0,~U_{1}^{\dagger}V=\left(\begin{array}{ccc}
 0.0625   & 0.0937  &  0.0625\\
 0 &   0.5000 &        0\\
0.0625  &  0.0937  &  0.0625
\end{array}
\right) \geq 0,~ U_{2}^{\dagger}=\left(\begin{array}{cc}
0.1250 &   0.1250\\
   0 &   0.5000\\
    0.1250  &  0.1250
\end{array}
\right) \geq 0$ and $VU_{2}^{\dagger}=\left(\begin{array}{ccc}
       0.2500 &   0\\
   0  &  0.5000
\end{array}
\right)\geq 0.$ So $A_{1}=U_{1}-V$ is a proper weak regular
splitting of type I and $A_{2}=U_{2}-V$ is a proper weak regular
splitting of type II. We have
 $A_{2}^{\dagger}=\left(\begin{array}{ccc}
 0.1667 &   0.2500\\
    0  &  1\\
    0.1667  &  0.2500
\end{array}
\right)  \geq
A_{1}^{\dagger}=\left(\begin{array}{ccc}
 0.0714  &  0.2500\\
  0 &   1\\
    0.0714 &   0.2500
\end{array}
\right) \geq 0$. But
$\rho(U_{1}^{\dagger}V)=\rho(U_{2}^{\dagger}V)=0.5.$
\end{ex}

For the square nonsingular case,  we have the following corollary to
Theorem \ref{main2}.

\begin{cor}\textnormal{(\cite{e1})}\\
 Let $A_{1}, A_{2} \in {\R}^{n \times n}$.  Let $A_{1}=U_{1}-V$ and $A_{2}=U_{2}-V$ be two
 weak regular splittings
 of different types. Suppose  that
$\rho(U_1^{-1}V)> 0$ and $\rho(U_2^{-1}V)> 0$.
   If $V \neq 0$ and $A_{2}^{-1}>A_{1}^{-1} \geq 0$,
    then $$\rho(U_{1}^{-1}V)<\rho(U_{2}^{-1}V)<1.$$
\end{cor}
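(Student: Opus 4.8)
The plan is to obtain this corollary as a direct specialization of Theorem \ref{main2} to the nonsingular square case; the only real work is to check that the square-matrix hypotheses translate verbatim into the rectangular ones so that Theorem \ref{main2} is literally applicable.

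First I would record the elementary identifications. Since $A_1$ and $A_2$ are nonsingular $n\times n$ matrices, $A_i^{\dagger}=A_i^{-1}$; and since each $A_i=U_i-V$ is a weak regular splitting of a nonsingular matrix, $U_i^{-1}$ exists by definition, hence $U_i^{\dagger}=U_i^{-1}$. Consequently $R(U_i)=\R^n=R(A_i)$ and $N(U_i)=\{0\}=N(A_i)$, so each $A_i=U_i-V$ is automatically a \emph{proper} splitting. Moreover, a weak regular splitting of type I ($U^{-1}\ge 0$ and $U^{-1}V\ge 0$) becomes, under $U^{\dagger}=U^{-1}$, precisely a proper weak regular splitting of type I, and a weak regular splitting of type II ($U^{-1}\ge 0$ and $VU^{-1}\ge 0$) becomes a proper weak regular splitting of type II; thus ``of different types'' in the square sense is exactly ``of different types'' in the proper sense.

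Next I would verify the remaining hypotheses of Theorem \ref{main2}. The inequality $A_2^{-1}>A_1^{-1}\ge 0$ reads $A_2^{\dagger}>A_1^{\dagger}\ge 0$; in particular $A_1^{\dagger}\ge 0$ and $A_2^{\dagger}\ge 0$, so both $A_1$ and $A_2$ are semimonotone, which is what the argument behind Theorem \ref{main2} (through Lemma \ref{main1}) needs. The positivity hypotheses $\rho(U_i^{-1}V)>0$ become $\rho(U_i^{\dagger}V)>0$, and $V\neq 0$ is unchanged.

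With every hypothesis of Theorem \ref{main2} in force, its conclusion yields $\rho(U_1^{\dagger}V)<\rho(U_2^{\dagger}V)<1$, i.e.\ $\rho(U_1^{-1}V)<\rho(U_2^{-1}V)<1$, as asserted. I do not expect a genuine obstacle here; the only step that deserves a line of care is the bookkeeping that a splitting of a nonsingular matrix is always proper and that the two notions of ``type'' coincide in the nonsingular setting, which is what makes Theorem \ref{main2} directly applicable.
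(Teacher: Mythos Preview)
Your proposal is correct and matches the paper's approach: the paper presents this result as an immediate corollary to Theorem \ref{main2} without giving a separate proof, so the intended argument is exactly the specialization you carry out. Your careful check that nonsingular splittings are automatically proper and that the two notions of ``type'' coincide is precisely the bookkeeping implicit in the paper's phrase ``for the square nonsingular case.''
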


We remark that the above result partially extends Theorem 3.5, \cite{e1}. While the authors assumed different
 hypotheses $A_{1} \lneq A_{2}$,  $A_{1}^{-1} > 0$  and $A_{2}^{-1}\geq 0$ in Theorem 3.5, \cite{e1}, we assumed
$A_{2}^{-1}>A_{1}^{-1} \geq 0$ in place of these three conditions. This is also mentioned after the proof
of Theorem 3.5 of \cite{e1}.
We conclude this section with another comparison theorem for two
different linear systems having two different types
 of proper weak regular splittings.

\begin{thm}\label{main5}
Let $A_{1}, A_{2} \in {\R}^{m \times n}$. Let $A_{1}=U_{1}-V_{1}$
and $A_{2}=U_{2}-V_{2}$ be two proper weak regular splittings of
different types. Suppose  that $\rho(U_1^{\dagger}V_{1})> 0$ and
$\rho(U_2^{\dagger}V_{2})> 0$. Assume that
    $V_{1} \neq 0$, $V_{2} \neq 0$ and $A_{2}^{\dagger}>A_{1}^{\dagger} \geq 0$. If
$V_{1} \leq V_{2},$ then
$$\rho(U_{1}^{\dagger}V_{1}) < \rho(U_{2}^{\dagger}V_{2})<1.$$

\end{thm}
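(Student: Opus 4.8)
The plan is to reduce this to the chain $\rho(U_1^{\dagger}V_1)\le \rho(U_1^{\dagger}V_2)<\rho(U_2^{\dagger}V_2)<1$, using Theorem \ref{d4}(i) for the first inequality and Theorem \ref{main2} for the second. The obstacle is that Theorem \ref{main2} is stated for a \emph{common} $V$, so I cannot apply it directly to $A_1=U_1-V_1$ and $A_2=U_2-V_2$. Instead I would introduce the auxiliary splitting $A_1 = U_1 - V_2 + (V_2 - V_1)$, i.e.\ write $A_1 + (V_2-V_1) = U_1 - V_2$; but this changes the left-hand matrix, so that route fails too. The cleaner device is to fix an intermediate matrix and splitting that shares $V_2$ with $A_2$ and is comparable to the $A_1$-system.

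Concretely, first I would use Theorem \ref{d4}(i): since $A_1 = U_1 - V_1$ is a proper weak regular splitting (of type I or II — note Theorem \ref{d4} as stated covers type I; for type II one invokes the analogous statement, or one rephrases via Lemma \ref{lemqeigval}) of the semimonotone matrix $A_1$, and $V_1 \le V_2$ would require $A_2 = A_1$, which is false. So the honest approach is: apply Theorem \ref{main2} to obtain $\rho(U_1^{\dagger}V_1) < \rho(U_2^{\dagger}V_1')$ for the right auxiliary splitting. The key realization is that the hypothesis $A_2^{\dagger} > A_1^{\dagger}\ge 0$ together with $V_1\le V_2$ lets us compare the matrices $A_1^{\dagger}V_1$ and $A_2^{\dagger}V_2$ directly at the level of the Perron vectors. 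Following the proof of Theorem \ref{main2}: set $G_i = A_i^{\dagger}V_i$ and $\tilde G_i = V_i A_i^{\dagger}$; by Theorem \ref{bpthm}(c), Theorem \ref{cli}(b) and Lemma \ref{lemqeigval}, $\rho(G_i) = \rho(\tilde G_i) = \rho(U_i^{\dagger}V_i)/(1-\rho(U_i^{\dagger}V_i))$, an increasing function of $\rho(U_i^{\dagger}V_i)$, so it suffices to prove $\rho(G_1) < \rho(G_2)$.

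Now split into the two type-configurations exactly as in Theorem \ref{main2}. Say $A_1=U_1-V_1$ is type I and $A_2=U_2-V_2$ is type II; then $G_1 \ge 0$ and $\tilde G_2\ge 0$. By Lemma \ref{main1} applied to $A_2=U_2-V_2$ there is $x\gneq 0$ with $U_2^{\dagger}V_2 x = \rho(U_2^{\dagger}V_2)x$ and $V_2 x \gneq 0$; hence $G_2 x = \rho(G_2) x$ (passing through $\tilde G_2$ via $x = A_2^{\dagger}$-type manipulations as in the parent proof), and then
\[
\rho(G_2)x = A_2^{\dagger}V_2 x > A_1^{\dagger}V_2 x \ge A_1^{\dagger}V_1 x = G_1 x,
\]
where the strict step uses $A_2^{\dagger} > A_1^{\dagger}\ge 0$ and $V_2 x \gneq 0$, and the second uses $A_1^{\dagger}\ge 0$ and $V_1 \le V_2$ (so $V_1 x \le V_2 x$, giving $A_1^{\dagger}V_1 x \le A_1^{\dagger}V_2 x$). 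Lemma \ref{lemma3.2}(ii) then gives $\rho(G_1) < \rho(G_2)$. In the mirror case ($A_1$ type II, $A_2$ type I) one applies Lemma \ref{main1} to $A_1=U_1-V_1$ to get $z\gneq 0$ with $U_1^{\dagger}V_1 z = \rho(U_1^{\dagger}V_1)z$, hence $G_1 z = \rho(G_1) z$, and
\[
G_2 z = A_2^{\dagger}V_2 z \ge A_2^{\dagger}V_1 z > A_1^{\dagger}V_1 z = \rho(G_1)z,
\]
so Lemma \ref{lemma3.2}(i) yields $\rho(G_1) < \rho(G_2)$. The main obstacle, and the place to be careful, is justifying $G_i x = \rho(G_i)x$ for the relevant Perron vector — i.e.\ that the eigenvector of $U_i^{\dagger}V_i$ produced by Lemma \ref{main1} is genuinely an eigenvector of $G_i = A_i^{\dagger}V_i$ with eigenvalue $\rho(G_i)$ — which follows from $A_i^{\dagger}V_i = (I-U_i^{\dagger}V_i)^{-1}U_i^{\dagger}V_i$ and the fact that $t\mapsto t/(1-t)$ is increasing on $[0,1)$, so the top eigenvalue of $U_i^{\dagger}V_i$ maps to the top eigenvalue of $G_i$; the monotonicity plus $\rho(U_i^{\dagger}V_i)<1$ (from Theorems \ref{m1}, \ref{m2}) closes the argument, and $V_1\ne 0$, $V_2\ne 0$ guarantee the relevant spectral radii and vectors are nonzero.
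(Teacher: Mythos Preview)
Your argument is correct and, once past the initial false starts about auxiliary splittings, coincides with the paper's: define $G_i = A_i^{\dagger}V_i$, take the Perron eigenvector $x$ (resp.\ $z$) from Lemma~\ref{main1} for the type-II splitting, and use $A_2^{\dagger} > A_1^{\dagger} \ge 0$ together with $V_1 \le V_2$ to obtain the strict componentwise inequality $\rho(G_2)x > G_1 x$ (resp.\ $G_2 z > \rho(G_1)z$), concluding via Lemma~\ref{lemma3.2}. The paper presents exactly this, stated tersely as a one-line modification of the inequalities (\ref{maineq1}) and (\ref{maineq2}) from the proof of Theorem~\ref{main2}; your intermediate step $A_2^{\dagger}V_2 x > A_1^{\dagger}V_2 x \ge A_1^{\dagger}V_1 x$ simply makes explicit what the paper compresses.
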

\begin{proof}
By Theorem \ref{m1} and Theorem  \ref{m2},
 we obtain $\rho(U_{i}^{\dagger}V_{i})<1,~i=1,2$.
The remaining proof is similar to the proof of Theorem \ref{main2},
with the exception that in place of (\ref{maineq1})
 we have to use one additional inequality
 $$\rho(G_2)x=G_{2}x=A_{2}^{\dagger}V_{2}x > A_{1}^{\dagger}V_{1}x=G_{1}x,$$
  and in place of (\ref{maineq2}), we need $G_{2}z = A_{2}^{\dagger}V_{2}z > A_{1}^{\dagger}V_{1}z=G_{1}z=\rho(G_{1})z$.
\end{proof}

Note that Theorem \ref{main2} is a special case of the
above result as the assumption $V_{1} \leq V_{2}$ is automatically fulfilled when $V_{1}=V_{2}$.\\

The example given below demonstrates that the converse of the above
theorem is not true.

\begin{ex}
Let $A_{1}=\left( \begin{array}{ccc}
2& -2 & 4\\
2 & 4 & -2\\
\end{array}
\right)$ and $A_{2}=\left( \begin{array}{ccc}
1& -2& 3\\
1& 3 & -2\\
\end{array}
\right)$. Then  $A_{2}^{\dagger}=\left( \begin{array}{cc}
 0.3333 &   0.3333\\
    0.0667 &   0.2667\\
    0.2667  &  0.0667
\end{array}
\right) > A_{1}^{\dagger}=\left( \begin{array}{cc}
 0.1667  &  0.1667\\
   0 &   0.1667\\
    0.1667  &  0
\end{array}
\right)\geq 0$. Let $U_{1}=\left( \begin{array}{ccc}
3 & -3 & 6\\
2 & 4 & -2
\end{array}
\right)$  and $U_{2}=\left( \begin{array}{ccc}
2 & -2 & 4\\
2 & 4 & -2
\end{array}
\right).$ Then $A_{1}=U_{1}-V_{1}$ is a proper weak regular
splitting of type I and $A_{2}=U_{2}-V_{2}$ is a proper weak regular
splittings of type II. We  have $0.3=\rho(U_{1}^{\dagger}V_{1}) <
0.5=\rho(U_{2}^{\dagger}V_{2})<1.$ But $V_{1}=\left(
\begin{array}{ccc}
1 & -1 & 2\\
0 & 0 & 0
\end{array}
\right) \nleq V_{2}=\left( \begin{array}{ccc}
1 & 0 & 1\\
1 & 1 & 0
\end{array}
\right)$.
\end{ex}

\section{Proper nonnegative splittings of different types}
The plan of this section is to obtain new comparison results for
proper nonnegative splittings of different types in order to speed
up the rate of  convergence of the iteration scheme
(\ref{introeq2}). The class of proper nonnegative splittings
contains earlier two classes of splittings, and hence study of this
class of matrices assumes significance. For later use, we record
first the following convergence result.

\begin{lem}\textnormal{(Lemma 3.5, \cite{miscma})}\label{lemd1}\\
Let $A=U-V$ be a proper nonnegative splitting of $A\in {\R}^{m
\times n}$. Then $A^{\dagger}V \geq 0$ if and only if $\displaystyle
\rho(U^{\dagger}V)=
\frac{\rho(A^{\dagger}V)}{1+\rho(A^{\dagger}V)}<1.$
\end{lem}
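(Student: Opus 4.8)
The plan is to reduce everything to the Perron--Frobenius theory (Theorems \ref{perron} and \ref{var2}) applied to the two nonnegative matrices $U^{\dagger}V$ and $A^{\dagger}V$, using the identity $A^{\dagger}V=(I-U^{\dagger}V)^{-1}U^{\dagger}V$, which follows from Theorem \ref{bpthm}(c). Throughout I would write $T=U^{\dagger}V$; by the nonnegative-splitting hypothesis $T\geq 0$, and since $I-T$ is nonsingular (Theorem \ref{bpthm}(b)) whereas $\rho(T)$ is itself an eigenvalue of $T$ (Theorem \ref{perron}(a)), one gets $\rho(T)\neq 1$ for free.

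For the easy (``if'') direction: if $\rho(U^{\dagger}V)=\frac{\rho(A^{\dagger}V)}{1+\rho(A^{\dagger}V)}<1$, then in particular $\rho(T)<1$, so Theorem \ref{var2} yields $(I-T)^{-1}=\sum_{k\geq 0}T^{k}\geq 0$, whence $A^{\dagger}V=(I-T)^{-1}T\geq 0$ as a product of nonnegative matrices.

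For the substantive (``only if'') direction, assume $A^{\dagger}V\geq 0$. First take a Perron eigenvector $x\gneq 0$ of $T$ with $Tx=\rho(T)x$ (Theorem \ref{perron}); since $\rho(T)\neq 1$, this gives $(I-T)^{-1}x=\frac{1}{1-\rho(T)}x$ and hence $A^{\dagger}Vx=(I-T)^{-1}Tx=\frac{\rho(T)}{1-\rho(T)}x$. As $A^{\dagger}V\geq 0$ and $x\gneq 0$, the left-hand side is nonnegative, which forces $\frac{\rho(T)}{1-\rho(T)}\geq 0$, i.e. $0\leq\rho(T)<1$ (this is exactly where $\rho(T)>1$ gets excluded, and it also delivers the required strict inequality $\rho(U^{\dagger}V)<1$). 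Moreover $\frac{\rho(T)}{1-\rho(T)}$ is then a nonnegative eigenvalue of $A^{\dagger}V$, so $\frac{\rho(T)}{1-\rho(T)}\leq\rho(A^{\dagger}V)$. Symmetrically, taking a Perron eigenvector $y\gneq 0$ of $A^{\dagger}V$ and using $A^{\dagger}V=(I-T)^{-1}T$, I would rearrange $(I-T)^{-1}Ty=\rho(A^{\dagger}V)y$ into $(1+\rho(A^{\dagger}V))Ty=\rho(A^{\dagger}V)y$, exhibiting $\frac{\rho(A^{\dagger}V)}{1+\rho(A^{\dagger}V)}$ as a nonnegative eigenvalue of $T$, so $\frac{\rho(A^{\dagger}V)}{1+\rho(A^{\dagger}V)}\leq\rho(T)$. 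Clearing denominators (legitimate since $1-\rho(T)>0$) shows these two inequalities are each other's reverse, so both are equalities and $\rho(U^{\dagger}V)=\frac{\rho(A^{\dagger}V)}{1+\rho(A^{\dagger}V)}$.

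I do not anticipate a real obstacle here; the only points needing care are excluding $\rho(T)=1$ (handled by combining nonsingularity of $I-T$ with Perron) and pinning down the \emph{exact} value of $\rho(U^{\dagger}V)$ rather than settling for a one-sided bound, which is why both Perron eigenvectors, for $T$ and for $A^{\dagger}V$, are needed. As an alternative, the eigenvalue correspondence $\mu=\frac{\lambda}{1+\lambda}$ of Lemma \ref{lemqeigval} together with the monotonicity of $t\mapsto t/(1+t)$ could be substituted for part of the eigenvector bookkeeping, but the two-Perron-vector computation above is cleaner and self-contained.
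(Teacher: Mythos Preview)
The paper does not actually prove this lemma: it is quoted verbatim from \cite{miscma} (Lemma~3.5 there) and used only as a tool in Section~4, so there is no ``paper's own proof'' to benchmark against.

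Your argument is correct. The identity $A^{\dagger}V=(I-T)^{-1}T$ from Theorem~\ref{bpthm}(c), together with Theorem~\ref{var2}, handles the ``if'' direction immediately. For the converse you correctly rule out $\rho(T)\geq 1$ by combining the nonsingularity of $I-T$ with the sign of $A^{\dagger}Vx$ on a Perron vector $x$ of $T$, and then pin down $\rho(T)=\frac{\rho(A^{\dagger}V)}{1+\rho(A^{\dagger}V)}$ by matching Perron eigenvalues in both directions; the algebra (e.g.\ $(1+\rho(A^{\dagger}V))Ty=\rho(A^{\dagger}V)y$) is right, and the monotonicity step used to close the two inequalities into an equality is valid since $1-\rho(T)>0$.

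As you note yourself, the alternative via Lemma~\ref{lemqeigval} --- the bijection $\mu_i\leftrightarrow\lambda_j=\mu_i/(1-\mu_i)$ between the spectra of $U^{\dagger}V$ and $A^{\dagger}V$ --- together with monotonicity of $t\mapsto t/(1+t)$ on $[0,\infty)$, gives the spectral-radius formula in one line once $\rho(T)<1$ is established; that is presumably closer in spirit to the original argument in \cite{miscma}. Your two-Perron-vector route is slightly longer but entirely self-contained and avoids appealing to the full eigenvalue correspondence.
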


 Next, we recollect the definition of a proper nonnegative splitting of type II
 proposed by Baliarsingh and Mishra \cite{alekha2}. Note that the proper nonnegative
  splitting of type I is same as the proper nonnegative splitting.

\begin{dfn} \textnormal{(Definition 3.14, \cite{alekha2})}\\
 A proper splitting $A=U-V$  of $A \in {\R}^{m \times n}$ is called a proper nonnegative
  splitting of type II if $VU^{\dagger}\geq 0$.
\end{dfn}

A convergence result for a proper nonnegative splitting of type II
is stated next.

\begin{lem}\textnormal{(Remark 2, \cite{c1})}\\
Let $A=U-V$ be a proper nonnegative splitting of type II of $A\in {\R}^{m
\times n}$. Then $VA^{\dagger} \geq 0$ if and only if $\displaystyle
\rho(VU^{\dagger})=
\frac{\rho(VA^{\dagger})}{1+\rho(VA^{\dagger})}<1.$
\end{lem}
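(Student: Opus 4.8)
\medskip\noindent\textbf{Proof proposal.} The plan is to reduce the whole statement to a property of the single nonnegative $m\times m$ matrix $T:=VU^{\dagger}$, which satisfies $T\geq 0$ precisely because $A=U-V$ is a proper nonnegative splitting of type II. Since it is in particular a proper splitting, Theorem~\ref{cli}(b) gives $A^{\dagger}=U^{\dagger}(I-VU^{\dagger})^{-1}$, so $(I-T)^{-1}$ exists; as $T$ commutes with $(I-T)^{-1}$ we obtain
$$VA^{\dagger}=VU^{\dagger}(I-VU^{\dagger})^{-1}=T(I-T)^{-1}=(I-T)^{-1}-I .$$
Also $\rho(VU^{\dagger})=\rho(U^{\dagger}V)=\rho(T)$ by the identity $\rho(AB)=\rho(BA)$ recalled in Section~2. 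Hence it suffices to prove: $T(I-T)^{-1}\geq 0$ if and only if $\rho(T)=\dfrac{\rho(T(I-T)^{-1})}{1+\rho(T(I-T)^{-1})}<1$. This is the exact mirror of Lemma~\ref{lemd1}, with $U^{\dagger}V$, $A^{\dagger}V$ replaced by $VU^{\dagger}$, $VA^{\dagger}$ and Theorem~\ref{bpthm} replaced by Theorem~\ref{cli}.

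For the ``if'' direction, only $\rho(T)<1$ is used: since $T\geq 0$, Theorem~\ref{var2} gives $(I-T)^{-1}=\sum_{k=0}^{\infty}T^{k}\geq 0$, and therefore $VA^{\dagger}=T(I-T)^{-1}=\sum_{k=1}^{\infty}T^{k}\geq 0$.

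For the ``only if'' direction, assume $VA^{\dagger}\geq 0$; then $(I-T)^{-1}=I+VA^{\dagger}\geq 0$. First I would show $\rho(T)<1$: by Theorem~\ref{perron} there is $x\gneq 0$ with $Tx=\rho(T)x$, so $\rho(T)$ is an eigenvalue of $T$, and $\rho(T)\neq 1$ because $I-T$ is invertible; if $\rho(T)>1$ then $(I-T)^{-1}x=\frac{1}{1-\rho(T)}x\lneq 0$, contradicting $(I-T)^{-1}\geq 0$ and $x\gneq 0$. With $\rho(T)<1$, applying $(I-T)^{-1}=I+VA^{\dagger}$ to $x$ gives $VA^{\dagger}x=\frac{\rho(T)}{1-\rho(T)}x$, so $\frac{\rho(T)}{1-\rho(T)}$ is a nonnegative real eigenvalue of $VA^{\dagger}$, whence $\frac{\rho(T)}{1-\rho(T)}\leq\rho(VA^{\dagger})$. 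Conversely, using Theorem~\ref{perron} pick $y\gneq 0$ with $VA^{\dagger}y=\rho(VA^{\dagger})y$ and put $z:=(I-T)^{-1}y=y+VA^{\dagger}y$; then $z\gneq 0$, and combining $T(I-T)^{-1}y=\rho(VA^{\dagger})y$ with $(I-T)z=y$ a one-line computation yields $Tz=\frac{\rho(VA^{\dagger})}{1+\rho(VA^{\dagger})}z$, so $\frac{\rho(VA^{\dagger})}{1+\rho(VA^{\dagger})}\leq\rho(T)$. Since $t\mapsto t/(1-t)$ is increasing on $[0,1)$, these two inequalities force $\rho(T)=\frac{\rho(VA^{\dagger})}{1+\rho(VA^{\dagger})}$; as $\rho(VU^{\dagger})=\rho(T)<1$, the stated equivalence follows.

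I expect the ``only if'' direction to be the main obstacle. One cannot merely assert $\rho(T(I-T)^{-1})=f(\rho(T))$ with $f(t)=t/(1-t)$, because $T$ and $VA^{\dagger}$ may have complex eigenvalues, on which $f$ need not be monotone in modulus; the equality of the two spectral radii must be extracted by the two-sided squeeze above, using Perron eigenvectors of both $T$ and $VA^{\dagger}$ and the observation that $z=(I-T)^{-1}y=y+VA^{\dagger}y$ is again a nonzero nonnegative vector. A minor secondary point is that ``$T\geq 0$ together with $(I-T)^{-1}\geq 0$ implies $\rho(T)<1$'' is not literally Theorem~\ref{var2}, so it needs the short Perron--Frobenius argument indicated.
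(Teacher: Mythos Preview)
The paper does not give its own proof of this lemma; it is simply quoted from \cite{c1} (Remark~2) and stated without argument, as the type~II counterpart of Lemma~\ref{lemd1}. So there is no in-paper proof to compare against.

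Your argument is correct and is exactly the natural dual of the type~I proof: you use Theorem~\ref{cli} in place of Theorem~\ref{bpthm} to write $VA^{\dagger}=T(I-T)^{-1}$ with $T=VU^{\dagger}\geq 0$, then run the standard Perron--Frobenius squeeze. The ``if'' direction via Theorem~\ref{var2} is immediate; in the ``only if'' direction your handling of the two subtle points is clean --- first ruling out $\rho(T)\geq 1$ from $(I-T)^{-1}=I+VA^{\dagger}\geq 0$ by evaluating on a Perron vector of $T$, and then establishing the spectral-radius equality by producing eigenvectors in both directions rather than by a (false) modulus-monotonicity argument for $t\mapsto t/(1-t)$ on complex eigenvalues. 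One cosmetic remark: once you observe $z=(I-T)^{-1}y=y+VA^{\dagger}y=(1+\rho(VA^{\dagger}))y$, the computation $Tz=\frac{\rho(VA^{\dagger})}{1+\rho(VA^{\dagger})}z$ is immediate, so the ``one-line computation'' you allude to is really just this substitution.
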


 We now prove the following comparison result which extends
 a part of Theorem 2.11, \cite{song} to rectangular matrices.

\begin{thm}\label{main6}
Let $A=U_{1}-V_{1}=U_{2}-V_{2}$ be two convergent proper nonnegative
splittings of the same
 type of a semimonotone matrix $A \in {\R}^{m \times n}$. If there exists $ \alpha,~ 0<\alpha \leq 1,$
 such that $V_{1}\leq \alpha V_{2}$ and $\rho(A^{\dagger}V_{i})>0,~i=1 ~ or~2,$
  then $$\rho(U_{1}^{\dagger}V_{1}) \leq \rho(U_{2}^{\dagger}V_{2})<1,$$
  whenever $\alpha =1$ and $$\rho(U_{1}^{\dagger}V_{1}) < \rho(U_{2}^{\dagger}V_{2})<1,$$
  whenever $0< \alpha < 1.$
\end{thm}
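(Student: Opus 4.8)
The plan is to reduce the whole statement to a comparison between $\rho(A^{\dagger}V_{1})$ and $\rho(A^{\dagger}V_{2})$, and then transfer that comparison back to $\rho(U_{i}^{\dagger}V_{i})$ through the scalar map $t\mapsto t/(1+t)$, which is strictly increasing on $[0,\infty)$. So the real content is a single nonnegativity manipulation, plus a short monotonicity argument.

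First I would handle the case where both splittings are of type I, i.e.\ $U_{i}^{\dagger}V_{i}\geq 0$. Since each splitting is convergent, $\rho(U_{i}^{\dagger}V_{i})<1$, so Theorem \ref{var2} gives $(I-U_{i}^{\dagger}V_{i})^{-1}=\sum_{k\geq 0}(U_{i}^{\dagger}V_{i})^{k}\geq 0$. Combining this with Theorem \ref{bpthm}$(c)$ yields $A^{\dagger}V_{i}=(I-U_{i}^{\dagger}V_{i})^{-1}U_{i}^{\dagger}V_{i}=\sum_{k\geq 1}(U_{i}^{\dagger}V_{i})^{k}\geq 0$, so Lemma \ref{lemd1} applies and gives $\rho(U_{i}^{\dagger}V_{i})=\rho(A^{\dagger}V_{i})/(1+\rho(A^{\dagger}V_{i}))$ for $i=1,2$. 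Now, since $A$ is semimonotone we have $A^{\dagger}\geq 0$; multiplying $V_{1}\leq\alpha V_{2}$ on the left by $A^{\dagger}$ gives $0\leq A^{\dagger}V_{1}\leq\alpha A^{\dagger}V_{2}\leq A^{\dagger}V_{2}$, where the last inequality uses $0<\alpha\leq 1$ and $A^{\dagger}V_{2}\geq 0$. These are nonnegative $n\times n$ matrices, so Theorem \ref{var1} yields $\rho(A^{\dagger}V_{1})\leq\alpha\,\rho(A^{\dagger}V_{2})\leq\rho(A^{\dagger}V_{2})$. For $\alpha=1$ this gives $\rho(A^{\dagger}V_{1})\leq\rho(A^{\dagger}V_{2})$, and monotonicity of $t/(1+t)$ together with convergence produces $\rho(U_{1}^{\dagger}V_{1})\leq\rho(U_{2}^{\dagger}V_{2})<1$. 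For $0<\alpha<1$, the hypothesis that $\rho(A^{\dagger}V_{i})>0$ for some $i$ forces $\rho(A^{\dagger}V_{2})>0$ (if $i=1$, use $0<\rho(A^{\dagger}V_{1})\leq\alpha\,\rho(A^{\dagger}V_{2})$), hence $\rho(A^{\dagger}V_{1})\leq\alpha\,\rho(A^{\dagger}V_{2})<\rho(A^{\dagger}V_{2})$, and strict monotonicity of $t/(1+t)$ gives $\rho(U_{1}^{\dagger}V_{1})<\rho(U_{2}^{\dagger}V_{2})<1$.

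The type II case ($V_{i}U_{i}^{\dagger}\geq 0$) is entirely parallel once one uses $\rho(U_{i}^{\dagger}V_{i})=\rho(V_{i}U_{i}^{\dagger})$ and $\rho(A^{\dagger}V_{i})=\rho(V_{i}A^{\dagger})$: from Theorem \ref{var2} and Theorem \ref{cli}$(b)$ one gets $V_{i}A^{\dagger}=\sum_{k\geq 1}(V_{i}U_{i}^{\dagger})^{k}\geq 0$ and then, by the type II analogue of Lemma \ref{lemd1} (Remark 2 of \cite{c1}), $\rho(U_{i}^{\dagger}V_{i})=\rho(V_{i}A^{\dagger})/(1+\rho(V_{i}A^{\dagger}))$; multiplying $V_{1}\leq\alpha V_{2}$ on the right by $A^{\dagger}\geq 0$ and repeating the argument with $\rho(V_{i}A^{\dagger})$ in place of $\rho(A^{\dagger}V_{i})$ finishes the proof.

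I do not expect a serious obstacle. The only point that needs care is the bookkeeping in the strict case $0<\alpha<1$: the hypothesis only asserts $\rho(A^{\dagger}V_{i})>0$ for one of the two indices, so one must argue that this already forces $\rho(A^{\dagger}V_{2})>0$ (equivalently $\rho(V_{2}A^{\dagger})>0$) before the strict inequality $\alpha\,\rho(A^{\dagger}V_{2})<\rho(A^{\dagger}V_{2})$ can be invoked. A secondary, purely formal point is making sure Lemma \ref{lemd1} (and its type II counterpart) is used in the direction ``$A^{\dagger}V\geq 0$ $\Rightarrow$ the eigenvalue identity'', which is why the nonnegativity of $A^{\dagger}V_{i}$ (resp.\ $V_{i}A^{\dagger}$) is established first by the Neumann series rather than quoted directly from convergence.
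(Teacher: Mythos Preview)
Your proof is correct and follows essentially the same route as the paper: establish $A^{\dagger}V_{i}\geq 0$ (resp.\ $V_{i}A^{\dagger}\geq 0$) from convergence, use $A^{\dagger}\geq 0$ and $V_{1}\leq\alpha V_{2}$ to get $\rho(A^{\dagger}V_{1})\leq\alpha\,\rho(A^{\dagger}V_{2})$ via Theorem~\ref{var1}, and then transfer back through the strictly increasing map $t\mapsto t/(1+t)$ using Lemma~\ref{lemd1}. Your explicit Neumann-series derivation of $A^{\dagger}V_{i}\geq 0$ and your separate justification that $\rho(A^{\dagger}V_{2})>0$ in the strict case are slightly more detailed than the paper's version, but the argument is the same.
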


\begin{proof}
Assume that the given splittings are convergent proper nonnegative
splittings of type I. So, we have $\rho(U_1^{\dagger}V_{1})<1$. By
Lemma \ref{lemd1}, we get $A^{\dagger}V_{1}\geq 0$. The
 conditions $A^{\dagger} \geq 0$ and $V_{1}\leq \alpha V_{2}$ together
 imply
$$0 \leq A^{\dagger}V_{1} \leq \alpha A^{\dagger}V_{2}.$$
It then follows from Theorem \ref{var1} that
\begin{equation}\label{ee1}
\rho( A^{\dagger}V_{1}) \leq \alpha \rho(A^{\dagger}V_{2}).
\end{equation}
Since $\displaystyle f(\eta)=\frac{\eta}{1+\eta}$ is a
strictly increasing
 function for $\eta \geq 0$, so
$$\frac{\rho(A^{\dagger}V_{1})}{1+\rho(A^{\dagger}V_{1})} \leq \frac{\alpha \rho(A^{\dagger}V_{2})}{1+\alpha\rho(A^{\dagger}V_{2})}.$$
For $\alpha=1$, the required result follows from Lemma \ref{lemqeigval},
 since $\displaystyle \rho(U^{\dagger}_{i}V_{i})=\frac{\rho(A^{\dagger}V_{i})}{1+\rho(A^{\dagger}V_{i})}>0$
 for $i=1$ or $ 2$. If $0< \alpha <1$, then from (\ref{ee1}), we get
 $$\rho( A^{\dagger}V_{1})<\rho(A^{\dagger}V_{2}),$$
and proceeding as before, we get the desire result.

The proof goes parallel in the case of proper nonnegative splitting of type
II.
\end{proof}

The second part of Theorem 2.11, \cite{song} is obtained as a corollary to the above result.
\begin{cor}
Let $A=U_{1}-V_{1}=U_{2}-V_{2}$ be two convergent  nonnegative
splittings of the same
 type of a monotone matrix $A \in {\R}^{n \times n}$. If there exists $ \alpha,~ 0<\alpha \leq 1,$
 such that $V_{1}\leq \alpha V_{2}$ and $\rho(A^{-1}V_{i})>0,~i=1 ~ or~2,$
  then $$\rho(U_{1}^{-1}V_{1}) \leq \rho(U_{2}^{-1}V_{2})<1,$$
  whenever $\alpha =1$ and $$\rho(U_{1}^{-1}V_{1}) < \rho(U_{2}^{-1}V_{2})<1,$$
  whenever $0< \alpha < 1.$
\end{cor}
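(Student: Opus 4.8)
The plan is to obtain this corollary directly from Theorem \ref{main6} by specializing the rectangular framework to the square nonsingular setting. First I would recall the Collatz characterization quoted in the Introduction: a monotone matrix $A \in \R^{n\times n}$ is nonsingular with $A^{-1} \geq 0$. Consequently $A^{\dagger} = A^{-1} \geq 0$, so $A$ is semimonotone in the sense of Section 2, and moreover $R(A) = \R^n$ and $N(A) = \{0\}$.

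Next I would verify that each square nonnegative splitting $A = U_i - V_i$ ($i = 1,2$) is in fact a proper nonnegative splitting of the corresponding type. By the footnote definition, a nonnegative splitting requires that $U_i^{-1}$ exist and $U_i^{-1}V_i \geq 0$; invertibility of $U_i$ forces $R(U_i) = \R^n = R(A)$ and $N(U_i) = \{0\} = N(A)$, so $A = U_i - V_i$ is a proper splitting and $U_i^{\dagger} = U_i^{-1}$. Hence $U_i^{\dagger}V_i = U_i^{-1}V_i \geq 0$, i.e.\ it is a proper nonnegative splitting (type I); if instead the splittings are of type II, then $V_iU_i^{\dagger} = V_iU_i^{-1} \geq 0$, which is exactly the definition of a proper nonnegative splitting of type II. "Convergent" means precisely $\rho(U_i^{\dagger}V_i) = \rho(U_i^{-1}V_i) < 1$, so the $A = U_i - V_i$ are convergent proper nonnegative splittings of the same type. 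The remaining hypotheses carry over verbatim: $V_1 \leq \alpha V_2$ with $0 < \alpha \leq 1$, and $\rho(A^{\dagger}V_i) = \rho(A^{-1}V_i) > 0$ for $i = 1$ or $2$.

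With this dictionary in hand, Theorem \ref{main6} applies and gives $\rho(U_1^{\dagger}V_1) \leq \rho(U_2^{\dagger}V_2) < 1$ when $\alpha = 1$, and $\rho(U_1^{\dagger}V_1) < \rho(U_2^{\dagger}V_2) < 1$ when $0 < \alpha < 1$; substituting $U_i^{\dagger} = U_i^{-1}$ yields the stated conclusion. There is no genuine obstacle here: the only point requiring care is confirming that the square nonsingular notions (monotone, nonnegative splitting, same type, convergent) line up with their rectangular proper analogues, which is the routine check described above.
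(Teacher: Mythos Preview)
Your proposal is correct and follows exactly the route the paper intends: the corollary is stated immediately after Theorem~\ref{main6} as its specialization to the square nonsingular setting, with no separate proof given. Your verification that monotone matrices are semimonotone and that (convergent) nonnegative splittings of a nonsingular matrix are (convergent) proper nonnegative splittings of the corresponding type is precisely the routine identification needed to invoke Theorem~\ref{main6}.
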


 In the case of proper nonnegative splittings of different types, the
 following result can be proved in a similar way as of the above
 one which extends Theorem 2.12, \cite{song} for rectangular matrices.
\begin{thm}\label{main7}
Let $A=U_{1}-V_{1}=U_{2}-V_{2}$ be two convergent proper nonnegative
splittings of different
 types of a semimonotone matrix $A \in {\R}^{m \times n}$. If there exists $ \alpha,~ 0<\alpha \leq 1,$
 such that $V_{1}\leq \alpha V_{2}$ and $\rho(A^{\dagger}V_{i})>0,~i=1 ~ or~2,$
  then $$\rho(U_{1}^{\dagger}V_{1}) \leq \rho(U_{2}^{\dagger}V_{2})<1,$$
  whenever $\alpha =1$ and $$\rho(U_{1}^{\dagger}V_{1}) < \rho(U_{2}^{\dagger}V_{2})<1,$$
  whenever $0< \alpha < 1.$
\end{thm}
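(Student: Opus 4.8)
\textbf{Proof proposal for Theorem \ref{main7}.}
The plan is to mimic the proof of Theorem \ref{main6} almost verbatim, paying attention only to the bookkeeping forced by the two splittings being of different types. Recall that in a proper nonnegative splitting of type I one has $U^{\dagger}V\geq 0$, and by Theorem \ref{bpthm}(c) and Lemma \ref{lemqeigval} the associated spectral radius is governed by $A^{\dagger}V=(I-U^{\dagger}V)^{-1}U^{\dagger}V$; convergence together with Lemma \ref{lemd1} forces $A^{\dagger}V\geq 0$ and $\rho(U^{\dagger}V)=\rho(A^{\dagger}V)/(1+\rho(A^{\dagger}V))$. For a type-II splitting one has $VU^{\dagger}\geq 0$ and, using Theorem \ref{cli}(b), the relevant quantity is $VA^{\dagger}=VU^{\dagger}(I-VU^{\dagger})^{-1}$; convergence (via Remark 2, \cite{c1}) gives $VA^{\dagger}\geq 0$ and $\rho(VU^{\dagger})=\rho(VA^{\dagger})/(1+\rho(VA^{\dagger}))$. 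The key observation that makes ``different types'' harmless is that $\rho(A^{\dagger}V)=\rho(VA^{\dagger})$ for any matrices whose product is defined, so in both cases the spectral radius of the iteration matrix equals $\rho(M_i)/(1+\rho(M_i))$ where $M_i\geq 0$ is either $A^{\dagger}V_i$ or $VA^{\dagger}$—and, crucially, the \emph{same} nonnegative matrix $A^{\dagger}V_i$ (respectively $V_iA^{\dagger}$) controls the estimate no matter which type the $i$-th splitting happens to be, because $\rho(A^{\dagger}V_i)=\rho(V_iA^{\dagger})$ regardless.

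Concretely, I would argue as follows. Since both splittings are convergent we have $\rho(U_i^{\dagger}V_i)<1$ for $i=1,2$ by the two convergence lemmas. From whichever of Lemma \ref{lemd1} or Remark 2 of \cite{c1} applies, $A^{\dagger}V_i\geq 0$ for each $i$ (using $\rho(A^{\dagger}V_i)=\rho(V_iA^{\dagger})$ to transfer the type-II statement about $V_iA^{\dagger}$ into a statement about $A^{\dagger}V_i$, noting $A^{\dagger}\geq 0$ already gives $A^{\dagger}V_i$ and $V_iA^{\dagger}$ their nonnegativity once $V_i\geq 0$ is known—but here I only need the spectral-radius identity, not entrywise equality). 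Then $A^{\dagger}\geq 0$ and $V_1\leq \alpha V_2$ give $0\leq A^{\dagger}V_1\leq \alpha A^{\dagger}V_2$, so Theorem \ref{var1} yields $\rho(A^{\dagger}V_1)\leq \alpha\,\rho(A^{\dagger}V_2)$. Applying the strictly increasing function $f(\eta)=\eta/(1+\eta)$ on $\eta\geq 0$ and using the identities
$$\rho(U_i^{\dagger}V_i)=\frac{\rho(A^{\dagger}V_i)}{1+\rho(A^{\dagger}V_i)}$$
(valid for the type-I index via Lemma \ref{lemd1}, and for the type-II index via Remark 2 of \cite{c1} combined with $\rho(A^{\dagger}V_i)=\rho(V_iA^{\dagger})$), we obtain
$$\rho(U_1^{\dagger}V_1)=\frac{\rho(A^{\dagger}V_1)}{1+\rho(A^{\dagger}V_1)}\leq \frac{\alpha\,\rho(A^{\dagger}V_2)}{1+\alpha\,\rho(A^{\dagger}V_2)}\leq \frac{\rho(A^{\dagger}V_2)}{1+\rho(A^{\dagger}V_2)}=\rho(U_2^{\dagger}V_2)<1.$$
When $\alpha=1$ the middle inequality is an equality, giving the non-strict conclusion; when $0<\alpha<1$, the hypothesis $\rho(A^{\dagger}V_i)>0$ for $i=1$ or $2$ makes at least one of the two inequalities strict (the first if $i=2$, the second if $i=1$, after noting $\rho(A^{\dagger}V_1)\le\alpha\rho(A^{\dagger}V_2)$ forces $\rho(A^{\dagger}V_2)>0$ whenever $\rho(A^{\dagger}V_1)>0$), yielding the strict conclusion.

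The only genuine subtlety—and the step I expect to be the main obstacle to write cleanly—is justifying the identity $\rho(U_i^{\dagger}V_i)=\rho(A^{\dagger}V_i)/(1+\rho(A^{\dagger}V_i))$ uniformly across the two types, since Lemma \ref{lemd1} is stated only for type I and Remark 2 of \cite{c1} only for type II and phrased in terms of $VU^{\dagger}$ and $VA^{\dagger}$. The resolution is the repeated use of $\rho(XY)=\rho(YX)$: $\rho(U_i^{\dagger}V_i)=\rho(V_iU_i^{\dagger})$ and $\rho(A^{\dagger}V_i)=\rho(V_iA^{\dagger})$, so the type-II formula $\rho(V_iU_i^{\dagger})=\rho(V_iA^{\dagger})/(1+\rho(V_iA^{\dagger}))$ translates literally into the displayed identity. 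With that identity in hand for both indices, the monotonicity argument via $f$ is identical to the one in Theorem \ref{main6}, and the strict/non-strict dichotomy is handled exactly as there; hence it suffices to say ``the proof is similar to that of Theorem \ref{main6}'' after recording the spectral-radius identities.
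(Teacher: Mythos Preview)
Your approach is correct and matches the paper's: the paper gives no separate proof of Theorem~\ref{main7}, merely stating that it ``can be proved in a similar way as of the above one'' (Theorem~\ref{main6}), and your use of $\rho(XY)=\rho(YX)$ to reconcile the type-I formula $\rho(U^{\dagger}V)=\rho(A^{\dagger}V)/(1+\rho(A^{\dagger}V))$ with the type-II formula $\rho(VU^{\dagger})=\rho(VA^{\dagger})/(1+\rho(VA^{\dagger}))$ is exactly the extra bookkeeping needed.

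One expositional wrinkle to fix: your displayed chain $0\leq A^{\dagger}V_1\leq \alpha A^{\dagger}V_2$ needs $A^{\dagger}V_1\geq 0$, which is \emph{not} supplied when the first splitting is of type~II (your parenthetical appeal to ``$V_i\geq 0$'' does not help, since $V_i\geq 0$ is nowhere assumed). The clean remedy is a one-line case split on the type of the \emph{first} splitting: if $A=U_1-V_1$ is type~I, pre-multiply $V_1\leq\alpha V_2$ by $A^{\dagger}\geq 0$ to get $0\leq A^{\dagger}V_1\leq \alpha A^{\dagger}V_2$; if it is type~II, post-multiply instead to get $0\leq V_1A^{\dagger}\leq \alpha V_2A^{\dagger}$. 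Either way Theorem~\ref{var1} plus $\rho(XY)=\rho(YX)$ yields $\rho(A^{\dagger}V_1)\leq\alpha\,\rho(A^{\dagger}V_2)$, and the rest of your argument via $f(\eta)=\eta/(1+\eta)$ goes through unchanged.
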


Another comparison result for proper nonnegative splittings of
different types is established below.

\begin{thm}\label{main8}
Let $A=U_{1}-V_{1}=U_{2}-V_{2}$ be two convergent proper nonnegative
splittings of different types of a semimonotone matrix $A \in
{\R}^{m \times n}$. If there exists $0< \alpha \leq 1,$ such that
$U_{2}^{\dagger}\leq \alpha U_{1}^{\dagger},$ then
$$\rho(U_{1}^{\dagger}V_{1}) \leq \rho(U_{2}^{\dagger}V_{2})<1,$$
whenever $\alpha =1$ and  $$\rho(U_{1}^{\dagger}V_{1}) < \rho(U_{2}^{\dagger}V_{2})<1,$$
whenever $0<\alpha <1.$
\end{thm}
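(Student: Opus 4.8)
The plan is to reduce the assertion to the single inequality $\rho(A^\dagger V_1)\le\rho(A^\dagger V_2)$ and then to extract it from $U_2^\dagger\le\alpha U_1^\dagger$ by means of a Perron eigenvector. For a convergent proper nonnegative splitting $A=U-V$ of a semimonotone matrix, of either type, one has $\rho(U^\dagger V)=g\big(\rho(A^\dagger V)\big)$ with $g(t)=t/(1+t)$ strictly increasing on $[0,\infty)$: for type I this is Lemma \ref{lemd1}, for type II it is the analogous statement (Remark 2 of \cite{c1}) together with $\rho(A^\dagger V)=\rho(VA^\dagger)$, and in both cases it also follows from Lemma \ref{lemqeigval} and Theorem \ref{perron}. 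Hence it suffices to prove $\rho(A^\dagger V_1)\le\rho(A^\dagger V_2)$, with strict inequality when $0<\alpha<1$. Moreover, passing from $A$ to $A^T=U_1^T-V_1^T=U_2^T-V_2^T$ — again a pair of proper splittings of a semimonotone matrix, with $(U_i^T)^\dagger=(U_i^\dagger)^T$, with the two types interchanged because $(U^T)^\dagger V^T=(VU^\dagger)^T$, and with $\rho(U_i^\dagger V_i)$ and the hypothesis $U_2^\dagger\le\alpha U_1^\dagger$ unchanged — lets me assume without loss of generality that $A=U_1-V_1$ is of type II and $A=U_2-V_2$ is of type I. By convergence this gives $V_1A^\dagger\ge 0$ and $A^\dagger V_2\ge 0$.

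Next I would write $U_1^\dagger$ and $U_2^\dagger$ in terms of $A^\dagger$. From Theorem \ref{cli}(b), $U_1^\dagger=A^\dagger(I-V_1U_1^\dagger)$, and since $V_1A^\dagger=(I-V_1U_1^\dagger)^{-1}-I$ one gets $U_1^\dagger=A^\dagger(I+V_1A^\dagger)^{-1}$; from Theorem \ref{bpthm}(b),(c), $U_2^\dagger=(I-U_2^\dagger V_2)A^\dagger$, and similarly $U_2^\dagger=(I+A^\dagger V_2)^{-1}A^\dagger$. As $A^\dagger V_2\ge 0$ and $V_1A^\dagger\ge 0$, the matrices $I+A^\dagger V_2$ and $I+V_1A^\dagger$ are nonnegative, so multiplying $U_2^\dagger\le\alpha U_1^\dagger$ on the left by $I+A^\dagger V_2$ and on the right by $I+V_1A^\dagger$ preserves the inequality; after cancellation this becomes
$$\alpha\, A^\dagger V_2 A^\dagger\ \ge\ A^\dagger V_1 A^\dagger+(1-\alpha)A^\dagger .$$

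Finally I would feed a Perron eigenvector into this. Set $\lambda_1=\rho(A^\dagger V_1)=\rho(V_1A^\dagger)$ and $\lambda_2=\rho(A^\dagger V_2)$; if $\lambda_1=0$ the required inequality is immediate for $\alpha=1$, so assume $\lambda_1>0$ and take, via Theorem \ref{perron}, a vector $z\gneq 0$ with $V_1A^\dagger z=\lambda_1 z$. Then $z=\lambda_1^{-1}V_1A^\dagger z\in R(V_1)\subseteq R(A)$, hence $x:=A^\dagger z\gneq 0$ (if $A^\dagger z=0$ then $z\in N(A^T)\cap R(A)=\{0\}$). Post-multiplying the displayed inequality by $z$ and using $A^\dagger V_1A^\dagger z=A^\dagger(\lambda_1 z)=\lambda_1 x$ gives $\alpha\, A^\dagger V_2\,x\ge(\lambda_1+1-\alpha)x$, i.e. $A^\dagger V_2\,x\ge\tfrac{\lambda_1+1-\alpha}{\alpha}\,x$ with $\tfrac{\lambda_1+1-\alpha}{\alpha}$ a positive scalar. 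Lemma \ref{lemma3.2}(i) applied to $A^\dagger V_2\ge 0$ then yields $\tfrac{\lambda_1+1-\alpha}{\alpha}\le\lambda_2$, that is, $\lambda_1\le\alpha(\lambda_2+1)-1\le\lambda_2$, the last step being strict precisely when $\alpha<1$. Applying $g$ and using $\rho(U_2^\dagger V_2)<1$ (convergence of the second splitting) completes the argument.

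I expect the delicate point to be the Perron step: the eigenvector has to be taken for $V_1A^\dagger$ (not for $A^\dagger V_1$) and then pushed through $A^\dagger$, because only this keeps the test vector nonnegative and makes part (i) of Lemma \ref{lemma3.2} usable — starting instead from an eigenvector $u$ of $A^\dagger V_1$ produces the vector $Au$, which need not be nonnegative, and one would be forced into part (ii) of Lemma \ref{lemma3.2}, which requires a strictly positive vector that is not at hand. The only other thing to watch is the transposition reduction, which collapses the two ``different type'' configurations into one.
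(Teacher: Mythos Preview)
Your argument is correct and reaches exactly the same quantitative estimate as the paper, namely $1-\rho(U_2^{\dagger}V_2)\le\alpha\bigl(1-\rho(U_1^{\dagger}V_1)\bigr)$, but along a different route. The paper keeps everything in terms of the iteration matrices: starting from $A^{\dagger}=U_2^{\dagger}(I-V_2U_2^{\dagger})^{-1}$ it multiplies $U_2^{\dagger}\le\alpha U_1^{\dagger}$ by the nonnegative resolvents $(I-U_1^{\dagger}V_1)^{-1}$ and $(I-V_2U_2^{\dagger})^{-1}$ (via Theorem~\ref{var2}) to obtain $(I-U_1^{\dagger}V_1)^{-1}A^{\dagger}\le\alpha A^{\dagger}(I-V_2U_2^{\dagger})^{-1}$, and then tests this with a \emph{left} Perron vector of $U_1^{\dagger}V_1$; the second ``different type'' configuration is dismissed as analogous. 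You instead rewrite $U_i^{\dagger}$ in terms of $A^{\dagger}$, getting the clean two–sided inequality $\alpha A^{\dagger}V_2A^{\dagger}\ge A^{\dagger}V_1A^{\dagger}+(1-\alpha)A^{\dagger}$, feed in a \emph{right} Perron vector of $V_1A^{\dagger}$ pushed through $A^{\dagger}$, and collapse the two configurations into one by the transposition trick. Your formulation makes the role of $A^{\dagger}V_i$ more transparent and avoids the Neumann--series appeal; the paper's version is slightly more direct in that it needs no change of variable. One small gap you should close: when $\lambda_1=0$ and $0<\alpha<1$ you still owe the strict inequality; this follows at once from your displayed inequality by taking any standard basis vector $e_j$ with $A^{\dagger}e_j\neq 0$, dropping the nonnegative term $A^{\dagger}V_1A^{\dagger}e_j$, and applying Lemma~\ref{lemma3.2}(i) to get $\lambda_2\ge(1-\alpha)/\alpha>0$. (The paper's proof has the mirror-image gap: the claim $x\in R(V_1^{T})$ after \eqref{eq101} tacitly uses $\rho(U_1^{\dagger}V_1)>0$.)
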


\begin{proof}
Assume that $A=U_{1}-V_{1}$ is a convergent proper nonnegative
splitting of type I and $A=U_{2}-V_{2}$
 is  a convergent proper nonnegative splitting of type II.
It then follows from Theorem \ref{var2}
that $(I-U_{1}^{\dagger}V_{1})^{-1}\geq 0$ and
  $(I-V_{2}U_{2}^{\dagger})^{-1}\geq 0$, respectively. By using Theorem \ref{cli} and
  the given condition $U_{2}^{\dagger} \leq \alpha U_{1}^{\dagger}$, we have
\begin{equation}\label{eq9}
A^{\dagger}=U_{2}^{\dagger}(I-V_{2}U_{2}^{\dagger})^{-1}\leq \alpha
U_{1}^{\dagger}(I-V_{2}U_{2}^{\dagger})^{-1}.
\end{equation}
Pre-multiplying (\ref{eq9}) by $(I-U_{1}^{\dagger}V_{1})^{-1}$, we get
\begin{equation}\label{eq10}
(I-U_{1}^{\dagger}V_{1})^{-1}A^{\dagger}\leq \alpha
(I-U_{1}^{\dagger}V_{1})^{-1}U_{1}^{\dagger}
(I-V_{2}U_{2}^{\dagger})=\alpha
A^{\dagger}(I-V_{2}U_{2}^{\dagger})^{-1}.
\end{equation}
Since $U^{\dagger}_{1}V_{1}\geq 0$,  there exists an eigenvector
 $x \geq 0$ such that
\begin{equation}\label{eq101}
 x^{T}U_{1}^{\dagger}V_{1}=\rho(U^{\dagger}_{1}V_{1})x^{T},
\end{equation}
by Theorem \ref{perron}. So $x\in R(V_{1}^T)\subseteq   R(A^T)$.
Pre-multiplying (\ref{eq10}) by $x^{T}$, we get
$$\frac{1}{1-\rho(U_{1}^{\dagger}V_{1})}x^{T}A^{\dagger}
\leq  \alpha x^{T}A^{\dagger}(I-V_{2}U^{\dagger}_{2})^{-1}.$$ By
Lemma \ref{lemma3.2}, it then follows that
$$\frac{1}{1-\rho(U_{1}^{\dagger}V_{1})} \leq \frac{\alpha
}{1-\rho(V_{2}U_{2}^{\dagger})}=\frac{\alpha
}{1-\rho(U_{2}^{\dagger}V_{2})},$$
i.e,
\begin{equation}\label{eq102}
\rho(U_{2}^{\dagger}V_{2}) \geq (1-\alpha) + \alpha \rho(U_{1}^{\dagger}V_{1}).
\end{equation}
As $x^{T}A^{\dagger} \geq 0$
and $x^{T}A^{\dagger} \neq 0$. Suppose that $x^{T}A^{\dagger} = 0$,
then $x^{T}A^{\dagger}A = 0$, i.e.,
$(A^{\dagger}A)^{T}x=A^{\dagger}Ax=x = 0$, a contradiction. Hence $x^{T}A^{\dagger} \neq 0$.
Now, the desired result follows immediately  from (\ref{eq102}).\\
In the case of  $A=U_{1}-V_{1}$ is a proper nonnegative
 splitting of type II and $A=U_{2}-V_{2}$ is a proper nonnegative splitting of type
 I, the proof is analogous to the above proof.
\end{proof}

The following is an immediate consequence of the above  result when
square nonsingular matrices are considered, and is a part of Theorem 2.14, \cite{song}.

\begin{cor}
Let $A=U_{1}-V_{1}=U_{2}-V_{2}$ be two convergent  nonnegative
splittings of different types of a monotone matrix $A \in
{\R}^{n \times n}$. If there exists $0< \alpha \leq 1,$ such that
$U_{2}^{-1}\leq \alpha U_{1}^{-1},$ then
$$\rho(U_{1}^{-1}V_{1}) \leq \rho(U_{2}^{-1}V_{2})<1,$$
whenever $\alpha =1$ and  $$\rho(U_{1}^{-1}V_{1}) < \rho(U_{2}^{-1}V_{2})<1,$$
whenever $0<\alpha <1.$
\end{cor}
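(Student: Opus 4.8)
The plan is to mirror the proof of Theorem~\ref{main8} verbatim, using the elementary fact that a monotone matrix is exactly an invertible matrix with nonnegative inverse, so that $A^{-1}$ plays the role of $A^{\dagger}$ throughout. First I would observe that a convergent nonnegative splitting $A=U-V$ of a nonsingular $A$ is in particular a proper splitting in the sense of the paper (since $R(U)=R(A)=\R^n$ and $N(U)=N(A)=\{0\}$ automatically), and that the notions of nonnegative splitting of type I/II reduce to $U^{-1}V\ge 0$ and $VU^{-1}\ge 0$ respectively, with $U^{\dagger}=U^{-1}$. Hence all the machinery of Theorems~\ref{bpthm}, \ref{cli}, \ref{var2}, Lemma~\ref{lemma3.2}, and Theorem~\ref{main8} applies with the Moore--Penrose inverse replaced by the ordinary inverse.

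Then I would reproduce the steps of Theorem~\ref{main8}. Assume first that $A=U_{1}-V_{1}$ is type I and $A=U_{2}-V_{2}$ is type II. Convergence plus Theorem~\ref{var2} gives $(I-U_{1}^{-1}V_{1})^{-1}\ge 0$ and $(I-V_{2}U_{2}^{-1})^{-1}\ge 0$. Using Theorem~\ref{cli}(b) in the nonsingular form $A^{-1}=U_{2}^{-1}(I-V_{2}U_{2}^{-1})^{-1}$ together with $U_{2}^{-1}\le\alpha U_{1}^{-1}$, multiply on the left by $(I-U_{1}^{-1}V_{1})^{-1}\ge 0$ to obtain, exactly as in~\eqref{eq10},
$$(I-U_{1}^{-1}V_{1})^{-1}A^{-1}\le \alpha A^{-1}(I-V_{2}U_{2}^{-1})^{-1}.$$
By Theorem~\ref{perron} applied to $U_{1}^{-1}V_{1}\ge 0$ there is $x\ge 0$, $x\neq 0$, with $x^{T}U_{1}^{-1}V_{1}=\rho(U_{1}^{-1}V_{1})x^{T}$; since $A$ is nonsingular, $x^{T}A^{-1}\neq 0$ is automatic (no range/null-space argument is needed here, unlike the rectangular case). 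Pre-multiplying the displayed inequality by $x^{T}$ and invoking Lemma~\ref{lemma3.2} together with $\rho(VU^{-1})=\rho(U^{-1}V)$ yields
$$\frac{1}{1-\rho(U_{1}^{-1}V_{1})}\le\frac{\alpha}{1-\rho(U_{2}^{-1}V_{2})},$$
i.e. $\rho(U_{2}^{-1}V_{2})\ge(1-\alpha)+\alpha\,\rho(U_{1}^{-1}V_{1})$, which gives both conclusions: equality case $\alpha=1$ forces $\rho(U_{1}^{-1}V_{1})\le\rho(U_{2}^{-1}V_{2})$, while $0<\alpha<1$ forces the strict inequality (noting $\rho(U_{2}^{-1}V_{2})<1$ throughout from convergence). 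The symmetric case, $A=U_{1}-V_{1}$ of type II and $A=U_{2}-V_{2}$ of type I, is handled by the same argument with the roles of left/right products interchanged.

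There is no real obstacle here; the only thing to be careful about is confirming that Theorem~\ref{main8}, which is stated for a \emph{semimonotone} rectangular matrix, specializes correctly—namely that ``$A$ monotone'' gives ``$A^{-1}$ exists and $A^{-1}\ge 0$'', which is precisely the Collatz characterization quoted in the introduction, and that this is what ``$A^{\dagger}\ge 0$'' becomes in the square nonsingular setting. So the cleanest write-up is simply to state that the corollary follows by specializing Theorem~\ref{main8} to the nonsingular case via $A^{\dagger}=A^{-1}$, with the range/null-space hypotheses on $U_1,U_2$ and the nonnegativity of $A^{\dagger}$ being automatic consequences of nonsingularity and monotonicity.
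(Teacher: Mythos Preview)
Your proposal is correct and follows essentially the same approach as the paper: the paper presents this corollary as an immediate consequence of Theorem~\ref{main8} for square nonsingular matrices, and you have simply spelled out that specialization explicitly, noting that $A^{\dagger}=A^{-1}$, that monotonicity gives $A^{-1}\ge 0$, and that the proper-splitting range/null-space conditions are automatic in the nonsingular setting.
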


The next result addresses the question of  existence of  an
$\alpha$.

\begin{thm}\label{main9}
 Let $A=U_{1}-V_{1}=U_{2}-V_{2}$ be
two convergent proper nonnegative splittings of different types of a
semimonotone matrix $A \in {\R}^{m \times n}$. If $U_{1}^{\dagger} > U_{2}^{\dagger},$
then there
 exists $\alpha,~0<\alpha<1,$ such that $U_{2}^{\dagger}\leq \alpha U_{1}^{\dagger} $
 and $\rho(U_{1}^{\dagger}V_{1})<\rho(U_{2}^{\dagger}V_{2})<1$.
\end{thm}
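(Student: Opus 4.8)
The plan is to reduce everything to Theorem \ref{main8}: that theorem already yields $\rho(U_1^{\dagger}V_1) < \rho(U_2^{\dagger}V_2) < 1$ once one exhibits a scalar $\alpha \in (0,1)$ with $U_2^{\dagger} \le \alpha U_1^{\dagger}$, so the entire task is to manufacture such an $\alpha$ from the strict inequality $U_1^{\dagger} > U_2^{\dagger}$. First I would note that this hypothesis already forces $A \neq 0$: if $A=0$ then $R(U_i)=R(A)=\{0\}$, hence $U_i = 0 = U_i^{\dagger}$, contradicting $U_1^{\dagger} > U_2^{\dagger}$. So no degenerate situation arises.

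To build $\alpha$, I would argue entrywise. Write $w_{ij}$ and $w'_{ij}$ for the $(i,j)$ entries of $U_1^{\dagger}$ and $U_2^{\dagger}$; the hypothesis says $w_{ij} > w'_{ij}$ for all $i,j$, and the desired relation $\alpha U_1^{\dagger} \ge U_2^{\dagger}$ is the family of scalar inequalities $\alpha w_{ij} \ge w'_{ij}$. If $w_{ij} \le 0$, then for any $\alpha \in (0,1)$ one has $\alpha w_{ij} \ge w_{ij} > w'_{ij}$ (scaling a non-positive number by a factor in $(0,1)$ only increases it), so such entries impose no constraint. If $w_{ij} > 0$, the inequality reads $\alpha \ge w'_{ij}/w_{ij}$, and here $w'_{ij}/w_{ij} < 1$. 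Consequently
$$\alpha := \max\Big\{\tfrac12,\ \max_{(i,j):\; w_{ij}>0}\ \frac{w'_{ij}}{w_{ij}}\Big\}$$
(dropping the inner maximum when no index pair satisfies $w_{ij}>0$) satisfies $0 < \alpha < 1$ together with $U_2^{\dagger} \le \alpha U_1^{\dagger}$, exactly as needed.

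Finally I would invoke Theorem \ref{main8}: the two splittings are convergent proper nonnegative splittings of different types of the semimonotone matrix $A$, and we have produced $\alpha$ with $0 < \alpha < 1$ and $U_2^{\dagger} \le \alpha U_1^{\dagger}$, so the $0<\alpha<1$ alternative of that theorem gives $\rho(U_1^{\dagger}V_1) < \rho(U_2^{\dagger}V_2) < 1$, which is the assertion. I do not anticipate a genuine obstacle here; the only subtlety worth flagging is that $U_1^{\dagger}$ is \emph{not} assumed nonnegative, so the case split on the sign of the entries $w_{ij}$ is genuinely required in order to be certain that $\alpha$ can be taken strictly below $1$ (rather than merely $\alpha \le 1$).
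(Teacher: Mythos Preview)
Your proposal is correct and follows essentially the same approach as the paper: construct $\alpha$ entrywise from the strict inequality $U_1^{\dagger} > U_2^{\dagger}$, then invoke Theorem \ref{main8}. The only cosmetic difference is that the paper splits cases on the sign of the entries of $U_2^{\dagger}$ (taking $\alpha = \max\{b_{ij}/a_{ij} : b_{ij}>0\}$ when some $b_{ij}>0$, and arbitrary $\alpha\in(0,1)$ otherwise), whereas you split on the sign of the entries of $U_1^{\dagger}$ and fold in the floor $\tfrac12$ to guarantee $\alpha>0$; both variants work, and your explicit remark that $U_1^{\dagger}$ need not be nonnegative is a welcome clarification.
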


\begin{proof}
Denote $$U_{1}^{\dagger}=(a_{ij}), \quad U_{2}^{\dagger}=(b_{ij}),
~~i=1,2,\ldots, n,~~j=1,2,\ldots,m.$$ From $U_{1}^{\dagger} >
U_{2}^{\dagger}$, we get
$$a_{ij}>b_{ij},\quad i=1,2,\ldots,n,~~j=1,2,\ldots, m.$$
If there exists $b_{ij}>0$ for some $i,j$, then let
$\alpha=\displaystyle \max_{\substack{
0 \leq i \leq n \\
0 \leq j \leq n }}\left \{\frac{b_{ij}}{a_{ij}}\vert~ b_{ij}>0
\right\}, $ otherwise, $0<\alpha < 1$ is arbitrary. Clearly, $0 <
\alpha < 1$ and
$$b_{ij}\leq \alpha ~a_{ij},\quad i=1,2,\ldots, n,~~j=1,2,\ldots,m,$$
i.e., $$U_{2}^{\dagger} \leq \alpha U_{1}^{\dagger}.$$ By Theorem
\ref{main8}, the inequality follows.
\end{proof}

Corollary 2.15, \cite{song} is obtained next as a corollary to the
above result in the case of square nonsingular matrices.

\begin{cor}\textnormal{(Corollary 2.15, \cite{song})}\\
Let $A=U_{1}-V_{1}=U_{2}-V_{2}$ be two convergent  nonnegative
splittings of different types of a monotone matrix $A \in
{\R}^{n \times n}$. If $U_{1}^{-1} > U_{2}^{-1},$ then
there
 exists $\alpha,~ 0<\alpha<1,$ such that $U_{2}^{-1}\leq \alpha U_{1}^{-1} $
 and $\rho(U_{1}^{-1}V_{1})<\rho(U_{2}^{-1}V_{2})<1$.
\end{cor}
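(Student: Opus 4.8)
The plan is to obtain this statement as the square, nonsingular specialization of Theorem \ref{main9}. The key observation is that when $A\in{\R}^{n\times n}$ is nonsingular, the Moore--Penrose inverse coincides with the ordinary inverse, $A^{\dagger}=A^{-1}$, and likewise $U_{k}^{\dagger}=U_{k}^{-1}$ for the matrices $U_{1},U_{2}$, which are invertible by the very definition of a nonnegative splitting. Moreover, any splitting $A=U-V$ of a nonsingular $A$ with $U$ nonsingular is automatically proper in the sense of \cite{bpcones}: indeed $R(A)={\R}^{n}=R(U)$ and $N(A)=\{\0\}=N(U)$. Hence a ``nonnegative splitting of type I (resp. type II)'' of the nonsingular matrix $A$ is precisely a ``proper nonnegative splitting of type I (resp. type II)'' in the terminology of Section 4.

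First I would record that, by Collatz's characterization recalled in the Introduction, a monotone matrix $A$ is exactly a nonsingular matrix with $A^{-1}\geq0$; in particular $A$ is semimonotone and $A^{\dagger}=A^{-1}\geq0$. Translating the remaining hypotheses: $A=U_{1}-V_{1}=U_{2}-V_{2}$ are two convergent proper nonnegative splittings of different types of the semimonotone matrix $A$, and $U_{1}^{\dagger}=U_{1}^{-1}>U_{2}^{-1}=U_{2}^{\dagger}$. These are precisely the hypotheses of Theorem \ref{main9}. Applying that theorem produces an $\alpha$ with $0<\alpha<1$ such that $U_{2}^{\dagger}\leq\alpha U_{1}^{\dagger}$ and $\rho(U_{1}^{\dagger}V_{1})<\rho(U_{2}^{\dagger}V_{2})<1$; rewriting $U_{k}^{\dagger}=U_{k}^{-1}$ gives $U_{2}^{-1}\leq\alpha U_{1}^{-1}$ and $\rho(U_{1}^{-1}V_{1})<\rho(U_{2}^{-1}V_{2})<1$, which is exactly the assertion.

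Alternatively, the argument can be run directly, mirroring the proof of Theorem \ref{main9}: write $U_{1}^{-1}=(a_{ij})$ and $U_{2}^{-1}=(b_{ij})$; from $U_{1}^{-1}>U_{2}^{-1}$ deduce $a_{ij}>b_{ij}$ for all $i,j$; set $\alpha=\max\{b_{ij}/a_{ij}:b_{ij}>0\}$ (and take any $\alpha\in(0,1)$ if no $b_{ij}$ is positive), so that $0<\alpha<1$ and $U_{2}^{-1}\leq\alpha U_{1}^{-1}$; then invoke the square nonsingular case of Theorem \ref{main8} (the Corollary immediately following it) to conclude the strict spectral-radius inequality. There is essentially no obstacle here: the only point needing a line of justification is the identification of ``nonnegative splitting'' with ``proper nonnegative splitting'' for nonsingular matrices, together with the remark that monotonicity amounts to semimonotonicity plus nonsingularity — both immediate.
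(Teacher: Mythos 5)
Your proposal is correct and follows exactly the route the paper intends: the corollary is obtained as the square nonsingular specialization of Theorem \ref{main9}, with the standard identifications $A^{\dagger}=A^{-1}$, monotone $=$ nonsingular semimonotone, and nonnegative splittings of a nonsingular matrix being automatically proper. Your remarks justifying these identifications are accurate, so no gap remains.
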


The example given below demonstrates that the converse of Theorem
\ref{main9} is not true.

\begin{ex}
Let $A=\left(\begin{array}{ccc}
5 & -4 & 0\\
-7 & 7 & 0
\end{array}\right)$. Then $A^{\dagger}=\left(\begin{array}{cc}
1 &   0.5714\\
    1 &   0.7143\\
         0  &       0
\end{array}
\right)\geq 0$. Let $U_{1}=\left(\begin{array}{ccc}
5 & -1 & 0\\
-7 & 7 & 0
\end{array}
\right)$ and $U_{2}=\left(\begin{array}{ccc}
5 & 0 & 0\\
0 & 8 & 0
\end{array}
\right).$ Then  $A=U_{1}-V_{1}$ is a proper nonnegative splitting of
type I and $A=U_{2}-V_{2}$ is a proper nonnegative splitting of type
II. We have
  $0.7500=\rho(U^{\dagger}_{1}V_{1})<\rho(U_{2}^{\dagger}V_{2})=0.9015<1$,
   and for $\displaystyle\alpha=0.8$,
 $U_{2}^{\dagger}=\left(\begin{array}{cc}
 0.2000   &      0\\
         0 &   0.1250\\
         0 &        0
\end{array} \right) \leq \left( \begin{array}{cc}
0.2000 &   0.0286\\
    0.2000 &   0.1429\\
         0   &      0
\end{array}\right)=\alpha U_{1}^{\dagger}$.
  But  $U_{1}^{\dagger}=\left( \begin{array}{cc}
 0.2500  &  0.0357\\
    0.2500 &   0.1786\\
         0  &       0
\end{array}\right)\ngtr\left(\begin{array}{cc}
0.2000    &     0\\
         0 &   0.1250\\
         0   &      0
\end{array} \right)= U_{2}^{\dagger}$.
\end{ex}

The following example shows that Theorem \ref{main8} and Theorem
\ref{main9} do not valid, if we consider proper nonnegative
splittings of same types instead of different types.

\begin{ex}
Let $A=\left(\begin{array}{ccc}
3 & -2 & 3\\
-2 & 3 & -2
\end{array}\right)$. Then $A^{\dagger}=\left(\begin{array}{cc}
3/10 & 1/5 \\
2/5 & 3/5\\
3/10 & 1/5
\end{array} \right)> 0$. Let $U_{1}=\left(\begin{array}{ccc}
12 & -10 & 12\\
-8 & 15 & -8
\end{array}
\right)$ and $U_{2}=\left(\begin{array}{ccc}
25/2 & -10 & 25/2\\
-8 & 15 & -8
\end{array}\right).$ Then
$A=U_{1}-V_{1}=U_{2}-V_{2}$ are two convergent proper nonnegative
splittings of type I.  We  have
$U^{\dagger}_{1}=\left(\begin{array}{cc}
0.0750  &  0.0500\\
    0.0800 &   0.1200\\
    0.0750  &  0.0500
\end{array} \right)>U^{\dagger}_{2}=\left(\begin{array}{cc}
 0.0698 &   0.0465\\
    0.0744 &   0.1163\\
    0.0698  &  0.0465
\end{array} \right),$
and for $\alpha= 0.9690<1$,
$U^{\dagger}_{2}=
\left(\begin{array}{cc}
 0.0698   & 0.0465\\
    0.0744 &   0.1163\\
    0.0698  &  0.0465
\end{array} \right) \leq \left(\begin{array}{cc}
 0.0727  &  0.0484\\
    0.0775 &   0.1163\\
    0.0727 &   0.0484
\end{array} \right)=\alpha U^{\dagger}_{1}.$
But $\rho(U_{1}^{\dagger}V_{1})=\rho(U_{2}^{\dagger}V_{2})=0.8.$
\end{ex}

The condition $A^{\dagger} \geq 0$ in Theorem \ref{main8} and
Theorem \ref{main9} is not redundant, and is illustrated hereunder
by an example.

\begin{ex}
Let $A=\left(\begin{array}{ccc}
2 & -7 & 2\\
-8 & 5 & -8
\end{array}
\right).$ Then $A^{\dagger}=\left(\begin{array}{cc}
 -0.0543  & -0.0761\\
   -0.1739 &  -0.0435\\
   -0.0543  & -0.0761

\end{array} \right) < 0.$ Let $U_{1}=\left(\begin{array}{ccc}
4 & -35 & 4\\
-16 & 25 & -16
\end{array}
\right)$ and
$U_{2}=\left(\begin{array}{ccc}
3 & -21/2 & 3\\
-12 & 15/2 & -12
\end{array}
\right)$. Then  $A=U_{1}-V_{1}$ is a proper nonnegative splitting of
type I
 and $A=U_{2}-V_{2}$ is a proper nonnegative splitting of type II.
  We have $0.3333=\rho(U_{2}^{\dagger}V_{2}) <
\rho(U_{1}^{\dagger}V_{1})=0.8.$
 But
 $U^{\dagger}_{2}=\left(\begin{array}{cc}
 -0.0362 &  -0.0507\\
   -0.1159 &  -0.0290\\
   -0.0362 &  -0.0507

\end{array} \right) < \left(\begin{array}{cc}
 -0.0272  & -0.0380\\
   -0.0348  & -0.0087\\
   -0.0272  & -0.0380\\
\end{array} \right)= U^{\dagger}_{1}.$
\end{ex}

The above example also motivates us to prove the following theorem
which extends Theorem 2.4, \cite{ziw2} to rectangular matrices.
However, we provide below a  short new proof.

\begin{thm}
Let $A=U_{1}-V_{1}=U_{2}-V_{2}$ be two convergent proper nonnegative
splittings of different types of $A\in{\R}^{m \times n}$. If
$A^{\dagger} \leq 0$ and $U_{2}^{\dagger} \geq U_{1}^{\dagger}$,
 then $$\rho(U_{1}^{\dagger}V_{1}) \leq \rho(U_{2}^{\dagger}V_{2})<1.$$
In particular, if
$A^{\dagger} < 0$ and $U_{2}^{\dagger} > U_{1}^{\dagger}$,
 then $$\rho(U_{1}^{\dagger}V_{1})<\rho(U_{2}^{\dagger}V_{2})<1.$$
\end{thm}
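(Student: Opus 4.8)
The plan is to follow the scheme of the proof of Theorem~\ref{main8} with $\alpha=1$, the role played there by the constraint $\alpha\le 1$ being taken over here by the hypothesis $A^{\dagger}\le 0$. Since both splittings are convergent, $\rho(U_{i}^{\dagger}V_{i})<1$ for $i=1,2$, so only the inequality $\rho(U_{1}^{\dagger}V_{1})\le\rho(U_{2}^{\dagger}V_{2})$ has to be established, and we may assume $\rho(U_{1}^{\dagger}V_{1})>0$ (otherwise it is trivial). As in Theorem~\ref{main8} there are two symmetric cases according to which splitting has type~I; suppose $A=U_{1}-V_{1}$ is of type~I and $A=U_{2}-V_{2}$ is of type~II. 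By Theorem~\ref{var2}, convergence gives $(I-U_{1}^{\dagger}V_{1})^{-1}\ge 0$ and $(I-V_{2}U_{2}^{\dagger})^{-1}\ge 0$, and Theorems~\ref{bpthm}(c) and~\ref{cli}(b) give $A^{\dagger}=(I-U_{1}^{\dagger}V_{1})^{-1}U_{1}^{\dagger}=U_{2}^{\dagger}(I-V_{2}U_{2}^{\dagger})^{-1}$.

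From $U_{2}^{\dagger}\ge U_{1}^{\dagger}$ and $(I-V_{2}U_{2}^{\dagger})^{-1}\ge 0$ I would deduce $A^{\dagger}\ge U_{1}^{\dagger}(I-V_{2}U_{2}^{\dagger})^{-1}$, and then pre-multiply by $(I-U_{1}^{\dagger}V_{1})^{-1}\ge 0$ to obtain, after using $A^{\dagger}=(I-U_{1}^{\dagger}V_{1})^{-1}U_{1}^{\dagger}$, the matrix inequality $(I-U_{1}^{\dagger}V_{1})^{-1}A^{\dagger}\ge A^{\dagger}(I-V_{2}U_{2}^{\dagger})^{-1}$. Picking (Theorem~\ref{perron}) a nonzero left eigenvector $x\ge 0$ of $U_{1}^{\dagger}V_{1}$ with $x^{T}U_{1}^{\dagger}V_{1}=\rho(U_{1}^{\dagger}V_{1})x^{T}$, so that $x^{T}(I-U_{1}^{\dagger}V_{1})^{-1}=\frac{1}{1-\rho(U_{1}^{\dagger}V_{1})}x^{T}$, and pre-multiplying the inequality by $x^{T}$ gives $\frac{1}{1-\rho(U_{1}^{\dagger}V_{1})}x^{T}A^{\dagger}\ge x^{T}A^{\dagger}(I-V_{2}U_{2}^{\dagger})^{-1}$. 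Now the sign hypothesis enters: $A^{\dagger}\le 0$ and $x\ge 0$ make $w^{T}:=-x^{T}A^{\dagger}$ nonnegative, and $w^{T}\ne 0$ because $x\in R(V_{1}^{T})\subseteq R(A^{T})$ (here $N(U_{1})=N(A)$, hence $R(U_{1}^{T})=R(A^{T})$, is what is used), which forces $x^{T}A^{\dagger}A=x^{T}\ne 0$. Multiplying through by $-1$ turns the inequality into $\frac{1}{1-\rho(U_{1}^{\dagger}V_{1})}w^{T}\le w^{T}(I-V_{2}U_{2}^{\dagger})^{-1}$, and transposing and arguing as in the last step of the proof of Theorem~\ref{main8} (via Lemma~\ref{lemma3.2}(i) and $\rho(V_{2}U_{2}^{\dagger})=\rho(U_{2}^{\dagger}V_{2})$) yields $\rho(U_{1}^{\dagger}V_{1})\le\rho(U_{2}^{\dagger}V_{2})$. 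In the complementary case ($U_{1}$ of type~II, $U_{2}$ of type~I) I would instead use $A^{\dagger}=(I-U_{2}^{\dagger}V_{2})^{-1}U_{2}^{\dagger}=U_{1}^{\dagger}(I-V_{1}U_{1}^{\dagger})^{-1}$, post-multiply by $(I-V_{1}U_{1}^{\dagger})^{-1}\ge 0$ to get $A^{\dagger}(I-V_{1}U_{1}^{\dagger})^{-1}\ge (I-U_{2}^{\dagger}V_{2})^{-1}A^{\dagger}$, post-multiply by a nonzero right eigenvector $x\ge 0$ of $V_{1}U_{1}^{\dagger}$, pass from $A^{\dagger}x\le 0$ to $u:=-A^{\dagger}x$ (nonzero since $x\in R(V_{1})\subseteq R(A)$ forces $AA^{\dagger}x=x$), and finish with Lemma~\ref{lemma3.2}(i) applied to $(I-U_{2}^{\dagger}V_{2})^{-1}$.

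For the strict statement, under $A^{\dagger}<0$ and $U_{2}^{\dagger}>U_{1}^{\dagger}$ I would upgrade each inequality above to a strict one: since $(I-B)^{-1}=\sum_{k\ge 0}B^{k}\ge I$ whenever $B\ge 0$, the matrix $(U_{2}^{\dagger}-U_{1}^{\dagger})(I-V_{2}U_{2}^{\dagger})^{-1}$ is entrywise positive, and this strictness is preserved under pre-multiplication by $(I-U_{1}^{\dagger}V_{1})^{-1}\ge I$ and by the nonzero nonnegative vector $x^{T}$; the resulting strict inequality lets one invoke the ``moreover'' clause of Lemma~\ref{lemma3.2}(i) ($Bx>\alpha x\Rightarrow\rho(B)>\alpha$) to conclude $\rho(U_{1}^{\dagger}V_{1})<\rho(U_{2}^{\dagger}V_{2})$, and the second case is strengthened identically.

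The one point that genuinely differs from the proof of Theorem~\ref{main8} — and the step I expect to require the most care — is the sign bookkeeping: with $A^{\dagger}\le 0$ one must replace $x^{T}A^{\dagger}$ (in the second case, $A^{\dagger}x$) by its negative before invoking the Perron--Frobenius comparison lemma, and, because semimonotonicity of $A$ is not assumed here, the nonvanishing of that negative has to be extracted directly from the range/null-space conditions $R(U_{i})=R(A)$, $N(U_{i})=N(A)$ of the proper splittings. Everything else is the bookkeeping of Theorem~\ref{main8}.
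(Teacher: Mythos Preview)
Your proposal is correct and follows essentially the same route as the paper: the same matrix inequality $(I-U_{1}^{\dagger}V_{1})^{-1}A^{\dagger}\ge A^{\dagger}(I-V_{2}U_{2}^{\dagger})^{-1}$ obtained from $U_{2}^{\dagger}\ge U_{1}^{\dagger}$, the same left Perron eigenvector $x^{T}$ of $U_{1}^{\dagger}V_{1}$, and the same sign-flip $z=x^{T}A^{\dagger}\mapsto -z$ before invoking Lemma~\ref{lemma3.2}. Your treatment is in fact slightly more explicit than the paper's in two respects: you spell out the complementary type case and you justify the strict inequality via the ``moreover'' clause of Lemma~\ref{lemma3.2}(i), whereas the paper merely states both without detail.
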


\begin{proof}
Assume that $A=U_{1}-V_{1}$ is a proper nonnegative of type I and
$A=U_{2}-V_{2}$ is a proper nonnegative of type II. Then there exists
an eigenvector $x \geq 0$ such that
\begin{equation}\label{n1}
x^{T}U_{1}^{\dagger}V_{1}=\rho(U_{1}^{\dagger}V_{1})x^{T}
\end{equation}
Therefore, $x \in  R(V_{1}^{T}) \subseteq R(U_{1}^{T})=R(A^{T})$.  From the given condition
 $U_{2}^{\dagger} \geq U_{1}^{\dagger}$, we obtain the  following inequality
\begin{equation}\label{eqq1}
A^{\dagger}=U_{2}^{\dagger}(I-V_{2}U_{2}^{\dag})^{-1} \geq U_{1}^{\dagger}(I-V_{2}U_{2}^{\dag})^{-1}.
\end{equation}
Pre-multiplying (\ref{eqq1}) by $(I-U_{1}^{\dagger}V_{1})^{-1}$, we obtain
\begin{equation}\label{eqq2}
(I-U_{1}^{\dagger}V_{1})^{-1}A^{\dagger} \geq
 (I-U_{1}^{\dagger}V_{1})^{-1}U_{1}^{\dagger}(I-V_{2}U_{2}^{\dag})^{-1}
 =A^{\dagger}(I-V_{2}U_{2}^{\dag})^{-1}.
\end{equation}
Again, pre-multiplying (\ref{eqq2}) by $x^{T}$, we get
\begin{equation}\label{eqq3}
\displaystyle \frac{1}{1-\rho(U_{1}^{\dagger}V_{1})}x^{T}A^{\dagger}
\geq x^{T}A^{\dagger}(I-V_{2}U_{2}^{\dag})^{-1}.
\end{equation}
Let $z=x^{T}A^{\dagger}$. Clearly, $z \leq 0$ and $z \neq 0$.
Otherwise, $x \in R(A^{T}) \cap N(A)$, which is a contradiction. So,
we get
$$\displaystyle \frac{1}{1-\rho(U_{1}^{\dagger}V_{1})}(-z) \leq
(-z)(I-V_{2}U_{2}^{\dag})^{-1}.$$ Now, the required result follows
from Lemma \ref{lemma3.2}.

The proof follows similarly when $A=U_{1}-V_{1}$ is proper
nonnegative of type II and $A=U_{1}-V_{1}$ is proper nonnegative of
type I.

\end{proof}

 Theorem 2.4, \cite{ziw2} is obtained as a corollary to the above result.

\begin{cor}\textnormal{(Theorem 2.4, \cite{ziw2})}\\
Let $A=U_{1}-V_{1}=U_{2}-V_{2}$ be two convergent nonnegative
splittings of different types of
$A\in{\R}^{n \times n}$. If $A^{-1} \leq 0$ and $U_{2}^{-1} \geq U_{1}^{-1}$, then
$$\rho(U_{1}^{-1}V_{1})<\rho(U_{2}^{-1}V_{2})<1.$$
In particular, if $A^{-1} < 0$ and $U_{2}^{-1} > U_{1}^{-1}$, then
 $$\rho(U_{1}^{-1}V_{1})<\rho(U_{2}^{-1}V_{2})<1.$$
\end{cor}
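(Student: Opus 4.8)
The plan is to reduce to a single configuration by symmetry, extract a left Perron eigenvector of $U_1^{\dagger}V_1$, and then transport the hypothesis $U_2^{\dagger}\ge U_1^{\dagger}$ through the two companion factorizations of $A^{\dagger}$ furnished by Theorem \ref{bpthm}(c) and Theorem \ref{cli}(b). Since the statement is symmetric in the two splittings, assume $A=U_1-V_1$ is proper nonnegative of type I (so $U_1^{\dagger}V_1\ge 0$) and $A=U_2-V_2$ is of type II (so $V_2U_2^{\dagger}\ge 0$); the opposite assignment follows by the same computation with $U_1^{\dagger}V_1$ replaced by $V_1U_1^{\dagger}$, etc. Both splittings being convergent gives $\rho(U_1^{\dagger}V_1)<1$ and $\rho(U_2^{\dagger}V_2)=\rho(V_2U_2^{\dagger})<1$, so Theorem \ref{var2} makes $(I-U_1^{\dagger}V_1)^{-1}=\sum_{k\ge 0}(U_1^{\dagger}V_1)^k$ and $(I-V_2U_2^{\dagger})^{-1}=\sum_{k\ge 0}(V_2U_2^{\dagger})^k$ nonnegative (and each dominates $I$). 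Applying Theorem \ref{perron} to $(U_1^{\dagger}V_1)^{T}\ge 0$ yields $x\gneq 0$ with $x^{T}U_1^{\dagger}V_1=\rho(U_1^{\dagger}V_1)x^{T}$; then $x\in R(V_1^{T})\subseteq R(U_1^{T})=R(A^{T})$, a membership needed at the end.

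The core is an inequality chain. From $U_2^{\dagger}\ge U_1^{\dagger}$, post-multiplying by $(I-V_2U_2^{\dagger})^{-1}\ge 0$ and using Theorem \ref{cli}(b) gives $A^{\dagger}=U_2^{\dagger}(I-V_2U_2^{\dagger})^{-1}\ge U_1^{\dagger}(I-V_2U_2^{\dagger})^{-1}$. Pre-multiplying by $(I-U_1^{\dagger}V_1)^{-1}\ge 0$ and using Theorem \ref{bpthm}(c) in the form $(I-U_1^{\dagger}V_1)^{-1}U_1^{\dagger}=A^{\dagger}$ yields
\[
(I-U_1^{\dagger}V_1)^{-1}A^{\dagger}\ \ge\ A^{\dagger}(I-V_2U_2^{\dagger})^{-1}.
\]
Pre-multiplying by $x^{T}$ and using $x^{T}(I-U_1^{\dagger}V_1)^{-1}=\tfrac{1}{1-\rho(U_1^{\dagger}V_1)}x^{T}$ (from $x^{T}(U_1^{\dagger}V_1)^k=\rho(U_1^{\dagger}V_1)^k x^{T}$) gives
\[
\frac{1}{1-\rho(U_1^{\dagger}V_1)}\,x^{T}A^{\dagger}\ \ge\ x^{T}A^{\dagger}(I-V_2U_2^{\dagger})^{-1}.
\]

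Now set $z^{T}=x^{T}A^{\dagger}$. Since $A^{\dagger}\le 0$ and $x\gneq 0$, $z\le 0$; moreover $z\ne 0$, for otherwise $x^{T}A^{\dagger}=0$ gives $x^{T}A^{\dagger}A=0$, i.e. $A^{\dagger}Ax=(A^{\dagger}A)^{T}x=x=0$ (using $A^{\dagger}A=P_{R(A^{T})}$ symmetric and $x\in R(A^{T})$), contradicting $x\gneq 0$. Rewriting the last display for $-z^{T}\gneq 0$ and transposing, Lemma \ref{lemma3.2}(i) applied to $[(I-V_2U_2^{\dagger})^{-1}]^{T}\ge 0$ gives $\tfrac{1}{1-\rho(U_1^{\dagger}V_1)}\le\rho\big((I-V_2U_2^{\dagger})^{-1}\big)$. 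Since $V_2U_2^{\dagger}\ge 0$ has spectral radius below $1$, $\rho\big((I-V_2U_2^{\dagger})^{-1}\big)=\tfrac{1}{1-\rho(V_2U_2^{\dagger})}=\tfrac{1}{1-\rho(U_2^{\dagger}V_2)}$, and rearranging yields $\rho(U_1^{\dagger}V_1)\le\rho(U_2^{\dagger}V_2)<1$. For the refined assertion, $A^{\dagger}<0$ forces $-z^{T}>0$, and since $(I-V_2U_2^{\dagger})^{-1}\ge I$ (likewise $(I-U_1^{\dagger}V_1)^{-1}\ge I$), the hypothesis $U_2^{\dagger}>U_1^{\dagger}$ makes $(U_2^{\dagger}-U_1^{\dagger})(I-V_2U_2^{\dagger})^{-1}$ entrywise positive, so the chain becomes strict and the ``moreover'' clause of Lemma \ref{lemma3.2}(i) upgrades the conclusion to $\rho(U_1^{\dagger}V_1)<\rho(U_2^{\dagger}V_2)<1$.

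I expect the main obstacle to be bookkeeping rather than a new idea: because $A^{\dagger}\le 0$ reverses the sign that normally drives Perron–Frobenius arguments, one must work with the left eigenvector and carry $-z$ throughout, and one must justify $z\ne 0$ via $R(A^{T})\cap N(A)=\{0\}$; the only other delicate point is confirming that strictness genuinely propagates in the second statement, which rests on $(I-V_2U_2^{\dagger})^{-1}$ (and $(I-U_1^{\dagger}V_1)^{-1}$) dominating the identity.
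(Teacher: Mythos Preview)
Your argument is correct and essentially reproduces the paper's proof of the preceding rectangular theorem, of which this corollary is simply the nonsingular specialization (the paper offers no separate proof). Your first conclusion $\rho(U_1^{-1}V_1)\le\rho(U_2^{-1}V_2)$ under $A^{-1}\le 0$, $U_2^{-1}\ge U_1^{-1}$ matches exactly what that theorem delivers; the strict ``$<$'' printed in the corollary's first clause is not supported by the paper's own argument either and appears to be a transcription slip.
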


\section{Comparison of proper multisplittings}
Improving the rate of convergence of the iteration scheme
(\ref{introeq2}) is a problem of interest for getting the solution
faster. In this direction, Climent and Perea \cite{c3} have proposed
proper multisplitting theory for rectangular matrices  while the
authors of \cite{white}  have studied the same problem in the
nonsingular matrix setting.  Here, we revisit the same theory as
proposed by Climent and Perea \cite{c3} first, and then produced a
few new convergence and comparison theorems for proper
multisplittings. In this context, the definition of a proper
multisplitting is recalled below.

\begin{dfn}\textnormal{(Definition 2, \cite{c3})}\\ \label{multisplitting1}
The triplet ${(U_k , V_k , E_k )}_{k=1}^{p}$
 is called a proper multisplitting of  $A \in {\R}^{m \times n}$ if\\
$(i) ~A= U_k- V_k$ is a proper splitting, for each $k=1, 2, \ldots, p$,\\
$(ii) ~E_k\geq0$, for  each $k=1, 2, \ldots, p$ is a diagonal ${m\times
m}$ matrix, and $ \sum_{k=1}^{p}E_{k} = I, $ where $ I $ is the $
m\times m$ identity matrix.
\end{dfn}

Using Definition \ref{multisplitting1}, Climent and Perea \cite{c3}
have considered
 the iteration scheme for solving (\ref{introeq1}) as follows:
\begin{equation}\label{climenteq}
x_{i+1}=Hx_{i}+Gb, \quad i=0,1,2,\ldots,
\end{equation}
where $H=\displaystyle\sum_{k=1}^{p}E_{k}U_{k}^{\dagger}V_{k}$ and
 $G=\displaystyle\sum_{k=1}^{p}E_{k}U_{k}^{\dagger}$. Here onwards, all
  $H$ and $G$ are defined as above unless stated otherwise.

\begin{remark}
Note that the matrix multiplication $E_{k}U^{\dagger}_{k}$ is not
defined in $G$ due to the order of $E_k$ is in an incorrect
form.
\end{remark}

We thus have modified the above definition,  and is presented next.

\begin{dfn}\label{multisplitting}
The triplet ${(U_k , V_k , E_k )}_{k=1}^{p}$
 is called a proper multisplitting of  $A \in {\R}^{m \times n}$ if\\
$(i) ~A= U_k- V_k$ is a proper splitting, for each $k=1, 2, \ldots, p$,\\
$(ii) ~E_k\geq0$, for each  $k=1, 2, \ldots, p$ is a diagonal
${n\times n}$ matrix, and $ \sum_{k=1}^{p}E_{k} = I, $ where $ I $
is the $n\times n$ identity matrix.
\end{dfn}

Then $H$ and $G$ are well defined.
  A proper multisplitting is called a proper regular multisplitting or a proper weak regular multisplitting,
 if each one of the proper splitting is a proper regular splitting or a proper weak regular splitting,
 respectively. Climent and Perea \cite{c3} obtained the following results for
a  proper weak regular multisplitting.

\begin{lem}\textnormal{(Lemma 1, \cite{c3})}\label{perea}\\
Let $(U_{k},V_{k},E_{k})_{k=1}^{p}$ be a proper weak regular multisplitting  of $A \in {\R}^{m \times n}$. Then\\
$(i)~H \geq 0$ and therefore $H^{j}$ for $j=0,1,\ldots.$\\
$(ii)~\displaystyle\sum_{k=1}^{p}E_{k}U_{k}^{\dagger}A=(I-H)A^{\dagger}A$.\\
$(iii)~(I+H+H^{2}+\cdots+H^{m})(I-H)=I-H^{m+1}.$
\end{lem}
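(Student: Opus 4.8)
The plan is to dispatch the three parts in order, using only the facts about proper splittings recorded in Section~2; parts $(i)$ and $(iii)$ are essentially immediate, and $(ii)$ is a two-line computation once Theorem~\ref{bpthm}$(c)$ is rearranged suitably.

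For $(i)$: by hypothesis each $A = U_k - V_k$ is a proper weak regular splitting, so $U_k^{\dagger}V_k \geq 0$, and $E_k \geq 0$ by Definition~\ref{multisplitting}. Since a product of entrywise nonnegative matrices is entrywise nonnegative, $E_k U_k^{\dagger}V_k \geq 0$ for every $k$, and therefore $H = \sum_{k=1}^{p} E_k U_k^{\dagger}V_k \geq 0$ as a sum of nonnegative matrices. An induction on $j$ (with $H^{0} = I \geq 0$) then gives $H^{j} \geq 0$ for all $j = 0, 1, 2, \ldots$.

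For $(ii)$: fix $k$. By Theorem~\ref{bpthm}, $I - U_k^{\dagger}V_k$ is nonsingular and $A^{\dagger} = (I - U_k^{\dagger}V_k)^{-1}U_k^{\dagger}$, hence $U_k^{\dagger} = (I - U_k^{\dagger}V_k)A^{\dagger}$ and so $U_k^{\dagger}A = (I - U_k^{\dagger}V_k)A^{\dagger}A$. The right-hand factor $A^{\dagger}A$ does not depend on $k$, so multiplying on the left by $E_k$, summing over $k$, and using $\sum_{k=1}^{p} E_k = I$ yields
$$\sum_{k=1}^{p} E_k U_k^{\dagger}A = \Big( \sum_{k=1}^{p} E_k - \sum_{k=1}^{p} E_k U_k^{\dagger}V_k \Big) A^{\dagger}A = (I - H)A^{\dagger}A,$$
which is the assertion.

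For $(iii)$: this is the purely algebraic telescoping identity; expanding the product $(I + H + H^{2} + \cdots + H^{m})(I - H)$ and cancelling the consecutive terms $H^{j}$ for $1 \leq j \leq m$ leaves $I - H^{m+1}$, and no positivity is needed (although, combined with $(i)$, this identity is exactly what drives the convergence analysis of the scheme (\ref{climenteq})). Overall the computation is light; the only place calling for a small observation is $(ii)$, where one must rewrite Theorem~\ref{bpthm}$(c)$ as $U_k^{\dagger} = (I - U_k^{\dagger}V_k)A^{\dagger}$ so that the common factor $A^{\dagger}A$ can be pulled to the right of the sum before invoking $\sum_{k=1}^{p} E_k = I$.
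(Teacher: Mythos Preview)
Your proof is correct; each of the three parts is handled by the natural direct argument, and the key step in $(ii)$---rewriting Theorem~\ref{bpthm}(c) as $U_k^{\dagger} = (I - U_k^{\dagger}V_k)A^{\dagger}$ so that the $k$-independent factor $A^{\dagger}A$ can be pulled outside the sum---is exactly right. Note that the paper itself does not supply a proof of this lemma: it is quoted from \cite{c3} without argument, so there is no ``paper's own proof'' to compare against, but your derivation is the standard one.
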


\begin{thm}\textnormal{(Theorem 4, \cite{c3})}\label{pereathm}\\
Let $(U_{k},V_{k},E_{k})_{k=1}^{p}$ be a proper weak regular
multisplitting  of a semimonotone matrix $A \in {\R}^{m \times n}$.
Then $\rho(H)<1.$
\end{thm}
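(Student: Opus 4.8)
The plan is to derive $\rho(H)<1$ from the non-negativity of $H$ (Lemma~\ref{perea}$(i)$) together with a Perron--Frobenius argument applied to $H^{T}$, using two elementary identities linking $H$, $G$ and $A^{\dagger}$; semimonotonicity of $A$ enters only through $A^{\dagger}\ge 0$. First I would record the identities. Since $U_{k}=V_{k}+A$ and $U_{k}^{\dagger}U_{k}=P_{R(U_{k}^{T})}=P_{R(A^{T})}=A^{\dagger}A$ for each $k$ (using $N(U_{k})=N(A)$), multiplying by $E_{k}U_{k}^{\dagger}$ and summing over $k$ with $\sum_{k}E_{k}=I$ gives $H+GA=A^{\dagger}A$, where $G:=\sum_{k=1}^{p}E_{k}U_{k}^{\dagger}\ge 0$; hence $H=A^{\dagger}A-GA$. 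Post-multiplying by $A^{\dagger}$ and using $AA^{\dagger}A=A$ together with $U_{k}^{\dagger}AA^{\dagger}=U_{k}^{\dagger}$ (true because $N(U_{k}^{\dagger})=N(U_{k}^{T})=R(A)^{\perp}$, so $U_{k}^{\dagger}$ annihilates $R(A)^{\perp}$) gives $HA^{\dagger}=A^{\dagger}-G$. Post-multiplying $H=A^{\dagger}A-GA$ instead by $A^{\dagger}A$ and using $(A^{\dagger}A)^{2}=A^{\dagger}A$, $AA^{\dagger}A=A$ gives $HA^{\dagger}A=H$; transposing (and using that $A^{\dagger}A$ is symmetric) yields $H^{T}=A^{\dagger}A\,H^{T}$, so $R(H^{T})\subseteq R(A^{\dagger}A)=R(A^{T})$.

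Next I would argue by contradiction: suppose $\rho:=\rho(H)\ge 1$. By Lemma~\ref{perea}$(i)$, $H\ge 0$, so $H^{T}\ge 0$ with $\rho(H^{T})=\rho\ge 1>0$, and by Theorem~\ref{perron} there is $y\gneq 0$ with $H^{T}y=\rho y$, i.e.\ $y^{T}H=\rho y^{T}$. Multiplying on the right by $A^{\dagger}$ and using $HA^{\dagger}=A^{\dagger}-G$ gives $\rho\,y^{T}A^{\dagger}=y^{T}A^{\dagger}-y^{T}G$, so $y^{T}G=(1-\rho)\,y^{T}A^{\dagger}$. The right-hand side is $\le 0$ (since $1-\rho\le 0$ and $y^{T}A^{\dagger}\ge 0$, as $A^{\dagger}\ge 0$ by semimonotonicity), while $y^{T}G\ge 0$ because $y\ge 0$ and $G\ge 0$; hence $y^{T}G=0$.

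Finally I would turn $y^{T}G=0$ into a contradiction. Writing $0=y^{T}G=\sum_{k}y^{T}E_{k}U_{k}^{\dagger}$ as a sum of non-negative row vectors forces $y^{T}E_{k}U_{k}^{\dagger}=0$ for every $k$; entrywise, for each index $i$ with $y_{i}>0$ and each $k$, either $(E_{k})_{ii}=0$ or the $i$-th row $e_{i}^{T}U_{k}^{\dagger}$ of $U_{k}^{\dagger}$ is zero. Since $\sum_{k}(E_{k})_{ii}=1$, for every such $i$ some $k$ has $(E_{k})_{ii}>0$, and then $e_{i}^{T}U_{k}^{\dagger}=0$, i.e.\ $e_{i}\in N((U_{k}^{\dagger})^{T})=R(U_{k}^{\dagger})^{\perp}=R(U_{k}^{T})^{\perp}=N(U_{k})=N(A)$. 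Thus $e_{i}\in N(A)$ for every $i$ in the support of $y$, so $y\in N(A)=R(A^{T})^{\perp}$. But $H^{T}y=\rho y$ and $R(H^{T})\subseteq R(A^{T})$ force $\rho y\in R(A^{T})\cap R(A^{T})^{\perp}=\{0\}$; since $\rho\ge 1$ this gives $y=0$, contradicting $y\gneq 0$. Hence $\rho(H)<1$.

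I expect the only (minor) obstacle to be the Moore--Penrose bookkeeping -- that $U_{k}^{\dagger}U_{k}=A^{\dagger}A$, $U_{k}^{\dagger}AA^{\dagger}=U_{k}^{\dagger}$ and $N((U_{k}^{\dagger})^{T})=N(A)$ -- each of which is immediate from $R(U_{k})=R(A)$, $N(U_{k})=N(A)$ and the properties of $A^{\dagger}$ listed in Section~2. A less direct alternative would first show $\{H^{m}\}$ is bounded, hence $\rho(H)\le 1$: post-multiplying Lemma~\ref{perea}$(iii)$ by $A^{\dagger}$ and using $HA^{\dagger}=A^{\dagger}-G$ gives $0\le H^{m+1}A^{\dagger}=A^{\dagger}-\sum_{j=0}^{m}H^{j}G\le A^{\dagger}$, and $H^{m}=(H^{m}A^{\dagger})A$ transfers this bound to $H^{m}$; one would then still have to exclude $\rho(H)=1$. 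The route above avoids both Lemma~\ref{perea}$(iii)$ and any limiting argument.
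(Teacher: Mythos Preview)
Your argument is correct. The identities $H+GA=A^{\dagger}A$, $HA^{\dagger}=A^{\dagger}-G$ and $HA^{\dagger}A=H$ follow exactly as you say from $R(U_{k})=R(A)$, $N(U_{k})=N(A)$; the Perron--Frobenius contradiction via a left eigenvector $y\gneq 0$ is clean, and the passage from $y^{T}G=0$ to $y\in N(A)$ is valid because $E_{k}$ is diagonal, $U_{k}^{\dagger}\ge 0$, and $\sum_{k}(E_{k})_{ii}=1$ guarantees at least one $k$ with $(E_{k})_{ii}>0$.

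Note, however, that the paper does \emph{not} supply its own proof of this statement: it is quoted verbatim as Theorem~4 of \cite{c3} and used as a black box. So there is no in-paper proof to compare against. The argument in \cite{c3} (and in the nonsingular precursor \cite{white}) proceeds along the lines of the ``less direct alternative'' you sketch at the end: one uses Lemma~\ref{perea}$(iii)$ post-multiplied by $A^{\dagger}$, together with $(I-H)A^{\dagger}=G\ge 0$, to obtain the monotone, bounded sequence $0\le \sum_{j=0}^{m}H^{j}G=A^{\dagger}-H^{m+1}A^{\dagger}\le A^{\dagger}$, and then argues that the powers $H^{m}$ stay bounded (whence $\rho(H)\le 1$) before ruling out $\rho(H)=1$. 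Your primary route avoids Lemma~\ref{perea}$(iii)$ and any limiting step entirely, replacing them with a single eigenvector argument; it is shorter and arguably more transparent, at the cost of a little extra Moore--Penrose bookkeeping ($R(H^{T})\subseteq R(A^{T})$ and $N((U_{k}^{\dagger})^{T})=N(A)$). Either approach is acceptable here.
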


It is of interest to know  the type of splitting $B-C$ of $A$ that
yields the iteration scheme (\ref{climenteq}) which is  restated as
what can we say about the type of the induced splitting $A=B-C$
being induced by
$H=\displaystyle\sum_{k=1}^{p}E_{k}U_{k}^{\dagger}V_{k}$. With an
additional hypothesis $R(E_{k}) \subseteq R(A^{T})$, for each
$k=1,2,\ldots,p$,
 of a proper weak regular multisplitting, we establish the following new
 result which addresses the above issue partially.

\begin{thm}\label{multi1}
Let $(U_{k},V_{k},E_{k})_{k=1}^{p}$ be a proper weak regular
multisplitting of a semimonotone matrix $A \in {\R}^{m \times n}$. Then the unique splitting
 $A=B-C$ induced by $H$ with
$B=A(I-H)^{-1}$ is a convergent proper weak regular splitting if
$R(E_{k}) \subseteq R(A^{T}),~\text{for each}~
k=1,2,\ldots,p$.
\end{thm}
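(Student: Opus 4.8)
The plan is to verify directly that the induced splitting $A=B-C$ with $B=A(I-H)^{-1}$ and $C=B-A$ is (i) a genuine proper splitting, (ii) convergent, and (iii) proper weak regular, the last step being where the hypothesis $R(E_k)\subseteq R(A^T)$ is really needed. First I would record that by Theorem \ref{pereathm} we already have $\rho(H)<1$, so $(I-H)^{-1}$ exists and, by Theorem \ref{var2} together with Lemma \ref{perea}$(i)$ (which gives $H\geq 0$), $(I-H)^{-1}=\sum_{j=0}^{\infty}H^j\geq 0$. Hence $B=A(I-H)^{-1}$ is well defined, and the iteration matrix of the splitting $A=B-C$ is $B^{\dagger}C=I-B^{\dagger}A$; one checks in the standard way that $B^{\dagger}A = I-H$ on $R(A^T)$, so that $B^{\dagger}C=H$ and convergence of the induced splitting is immediate from $\rho(H)<1$ once we know the splitting is proper.

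Next I would establish that $A=B-C$ is a proper splitting, i.e. $R(B)=R(A)$ and $N(B)=N(A)$. Since $(I-H)^{-1}$ is nonsingular, $R(B)=R\big(A(I-H)^{-1}\big)=R(A)$ is clear. For the null spaces, $N(B)\supseteq N\big((I-H)^{-1}\big)=\{0\}$ is not enough; instead I would argue that $B=A(I-H)^{-1}$ gives $N(A)\subseteq N(B)$ trivially from the factorization $A = B(I-H)$, and conversely use that $(I-H)^{-1}$ maps $N(A)$ into itself — this is exactly where $R(E_k)\subseteq R(A^T)$ enters, since it forces $H=\sum_k E_k U_k^{\dagger}V_k$ to have $R(H)\subseteq R(A^T)$ and $N(H)\supseteq N(A)$ (using $R(U_k^{\dagger})=R(U_k^T)=R(A^T)$ and that each $U_k^{\dagger}V_k$ already respects $N(A)$ from the proper-splitting structure via Theorem \ref{bpthm}). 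With $N(H)\supseteq N(A)$ and $R(H)\subseteq R(A^T)$ one gets that $I-H$ restricts to an isomorphism of $R(A^T)$ and fixes $N(A)$, from which $B^{\dagger}B=A^{\dagger}A$, hence $N(B)=N(A)$, follows. Then Theorem \ref{bpthm} applied to $A=B-C$ yields $A^{\dagger}=(I-B^{\dagger}C)^{-1}B^{\dagger}=(I-H)^{-1}B^{\dagger}$, so $B^{\dagger}=(I-H)A^{\dagger}$.

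Finally, to see the induced splitting is proper weak regular I must show $B^{\dagger}\geq 0$ and $B^{\dagger}C=H\geq 0$. The second is Lemma \ref{perea}$(i)$. For the first, $B^{\dagger}=(I-H)A^{\dagger}=A^{\dagger}-HA^{\dagger}$; I would instead use $B^{\dagger}=\sum_k E_k U_k^{\dagger}$ on $R(A)$, which is exactly Lemma \ref{perea}$(ii)$ rearranged ($\sum_k E_k U_k^{\dagger}A=(I-H)A^{\dagger}A$ gives $\sum_k E_k U_k^{\dagger}=(I-H)A^{\dagger}=B^{\dagger}$ once the range/null conditions let us cancel the $A$), and this is $\geq 0$ since each $E_k\geq 0$ and each $U_k^{\dagger}\geq 0$ (proper weak regular). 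Convergence then follows from Theorem \ref{m1}, since $A^{\dagger}\geq 0$ (semimonotonicity) and $A=B-C$ is proper weak regular, or more directly from $\rho(B^{\dagger}C)=\rho(H)<1$ already in hand. The main obstacle I anticipate is the null-space bookkeeping in step two: showing $N(B)=N(A)$ really does require the extra hypothesis $R(E_k)\subseteq R(A^T)$, and one must be careful that without it $H$ need not preserve $N(A)$, so $(I-H)^{-1}$ need not map $N(A)$ to itself and $A(I-H)^{-1}$ could fail to be a proper splitting — so the heart of the argument is tracking how $R(E_k)\subseteq R(A^T)$ propagates to $R(H)\subseteq R(A^T)$ and $N(H)\supseteq N(A)$.
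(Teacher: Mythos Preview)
Your overall strategy coincides with the paper's: obtain $\rho(H)<1$, show that $A=B-C$ with $B=A(I-H)^{-1}$ is proper, identify $B^{\dagger}$ with $G=\sum_k E_kU_k^{\dagger}$, and then read off $B^{\dagger}\geq 0$ and $B^{\dagger}C=H\geq 0$. However, you have mislocated where the hypothesis $R(E_k)\subseteq R(A^T)$ is actually needed, and this leads to two errors in the write-up.

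The inclusion $N(H)\supseteq N(A)$ follows from the proper-splitting structure alone (for $x\in N(A)=N(U_k)$ one has $V_kx=U_kx-Ax=0$, so $Hx=0$), and this single fact already gives $N(B)=N(A)$ in \emph{both} directions: since $Bx=0$ iff $(I-H)^{-1}x\in N(A)$, one has $N(B)=(I-H)N(A)$, and $(I-H)$ acts as the identity on $N(A)$. Thus your assertion that ``$N(A)\subseteq N(B)$ trivially from the factorization $A=B(I-H)$'' is not correct without invoking $Hx=0$, and your closing emphasis that ``showing $N(B)=N(A)$ really does require the extra hypothesis $R(E_k)\subseteq R(A^T)$'' is false. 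The hypothesis is instead what yields $R(H)\subseteq R(A^T)$, i.e.\ $A^{\dagger}AH=H$ (and, combined with the above, $HA^{\dagger}A=H$). This is precisely what is needed later: once the splitting is proper one computes $B^{\dagger}C=B^{\dagger}B-B^{\dagger}A=A^{\dagger}A-A^{\dagger}A(I-H)=A^{\dagger}AH$, and only the range condition collapses this to $H$; similarly it is needed to identify $B^{\dagger}$ with $(I-H)A^{\dagger}=G$. Your line ``$B^{\dagger}C=I-B^{\dagger}A$'' is also wrong as written (it should be $B^{\dagger}B-B^{\dagger}A$), and using $B^{\dagger}C=H$ before $B^{\dagger}$ is determined is circular. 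The paper sidesteps this by first proving $A^{\dagger}AH=HA^{\dagger}A=H$ and then directly verifying the four Moore--Penrose equations for the pair $X=A(I-H)^{-1}$ and $Y=(I-H)A^{\dagger}$, which cleanly yields $B^{\dagger}=G$.

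Finally, you omit the uniqueness of the induced splitting, which is part of the statement: if $A=\tilde B-\tilde C$ is any proper splitting with $\tilde B^{\dagger}\tilde C=H$, then $\tilde BH=\tilde B\tilde B^{\dagger}\tilde C=\tilde C$, so $\tilde B(I-H)=A$ and $\tilde B=A(I-H)^{-1}=B$.
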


\begin{proof}
By using the condition $R(E_{k}) \subseteq R(A^{T})$, we
have $A^{\dagger}AE_{k}=E_{k}$ and $E_{k}A^{\dagger}A=E_{k}$. Then
\begin{align*}
\displaystyle A^{\dagger}AH  &=A^{\dagger}A\sum_{k=1}^{p} E_{k}U_{k}^{\dagger}V_{k}\\
&=\sum_{k=1}^{p} A^{\dagger}AE_{k}U_{k}^{\dagger}V_{k}\\
&=\sum_{k=1}^{p} E_{k}U_{k}^{\dagger}V_{k}\\
\end{align*}
\begin{align*}
&=\sum_{k=1}^{p}E_{k}U_{k}^{\dagger}V_{k}A^{\dagger}A\\
&=HA^{\dagger}A=H.
\end{align*}
Now, post-multiplying  Lemma \ref{perea}$(ii)$ by $A^{\dagger}$, we
get $G=(I-H)A^{\dagger}$. By Theorem \ref{pereathm}, we obtain
$\rho(H)<1$ and so $(I-H)$ is invertible. From equation
(\ref{climenteq}), we obtain $B^{\dagger}=G=(I-H)A^{\dagger}$.
 Let $X=A(I-H)^{-1}$.
 Then $XB^{\dagger}=AA^{\dagger}$ and $B^{\dagger}X=(I-H)A^{\dagger}A(I-H)^{-1}=
 (A^{\dagger}A-HA^{\dagger}A)(I-H)^{-1}=
 (A^{\dagger}A-A^{\dagger}AH)(I-H)^{-1}=A^{\dagger}A(I-H)(I-H)^{-1}=A^{\dagger}A$ which imply $XB^{\dagger}$
 and $B^{\dagger}X$ are symmetric. Also, $XB^{\dagger}X=AA^{\dagger}A(I-H)^{-1}=A(I-H)^{-1}=X$
 and $B^{\dagger}XB^{\dagger}=A^{\dagger}A(I-H)A^{\dagger}
 =(A^{\dagger}A-A^{\dagger}AH)A^{\dagger}=(A^{\dagger}A-HA^{\dagger}A)A^{\dagger}=(I-H)A^{\dagger}AA^{\dagger}
 =(I-H)A^{\dagger}=B^{\dagger}$. Therefore, $B=A(I-H)^{-1}$.

Clearly, $R(B)=R(A)$ as $B=A(I-H)^{-1}$. Next we prove that
$N(B)=N(A)$. Let $x \in N(A)$. Then $0=Ax=B(I-H)x
 =B(x-Hx)
 =B(x-\displaystyle\sum_{k=1}^{p}E_{k}U_{k}^{\dagger}V_{k}x)
 =Bx,$
  since $N(V_{k}) \supseteq N(A)$. So $N(A) \subseteq N(B)$. Again, let $y \in N(B)$. Then we
  get
$By=A(I-H)^{-1}y=0$. Pre-multiplying $A^{\dagger}$, we
get $A^{\dagger}A(I-H)^{-1}y=0$. Again, using the fact that
$A^{\dagger}AH=HA^{\dagger}A$ and pre-multiplying $A$, we get
 $Ay=0$. So $N(B) \subseteq
N(A)$.  Thus $N(B)=N(A).$

Next, we have to prove that $A=B-C$ is unique. Suppose that there
exists another induced splitting $A=\tilde{B}-\tilde{C}$ such that
$\tilde{B}=A(I-H)^{-1}$. Then $\tilde{B}^{\dagger}\tilde{C}=H$ and
$\tilde{B}H=\tilde{B}\tilde{B}^{\dagger}\tilde{C}=\tilde{C}=\tilde{B}-A$.
So $\tilde{B}=A+\tilde{B}H$, i.e., $\tilde{B}(I-H)=A$. This reveals
that $\tilde{B}=A(I-H)^{-1}=B$ and therefore, $H$ induces the unique
proper splitting $A=B-C$.

Finally, $B^{\dagger}=G \geq 0$ and
$B^{\dagger}C=B^{\dagger}(B-A)=B^{\dagger}B-B^{\dagger}A=
A^{\dagger}A-A^{\dagger}A(I-H)=A^{\dagger}AH=H \geq 0$. By Theorem \ref{pereathm},
 we get $\rho(B^{\dagger}C)=\rho(H)<1$.

\end{proof}

The corollary produced below adds a new convergence result to
multisplitting theory for solving the square nonsingular system of
linear equations.

\begin{cor}
Let $(U_{k},V_{k},E_{k})_{k=1}^{p}$ be a  weak regular
multisplitting of a monotone matrix $A \in {\R}^{n \times n}$. Then the unique
splitting $A=B-C$ induced by $H$ with
$B=A(I-H)^{-1}$ is a convergent  weak regular splitting.
\end{cor}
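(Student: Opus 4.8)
The plan is to obtain this corollary as the square nonsingular specialization of Theorem \ref{multi1}. First I would recall, via Collatz's characterization mentioned in the Introduction, that a monotone matrix $A \in {\R}^{n \times n}$ is invertible with $A^{-1} \geq 0$; hence $A^{\dagger} = A^{-1}$, $R(A) = R(A^{T}) = {\R}^{n}$ and $N(A) = \{\0\}$. Consequently, for every proper splitting $A = U_{k} - V_{k}$ one has $R(U_{k}) = {\R}^{n}$ and $N(U_{k}) = \{\0\}$, so each $U_{k}$ is invertible and $U_{k}^{\dagger} = U_{k}^{-1}$. Thus a proper weak regular splitting of $A$ is exactly a weak regular splitting (as already observed in the footnote of Section 1), and a proper weak regular multisplitting of $A$ is exactly a weak regular multisplitting; in particular $H = \sum_{k=1}^{p} E_{k}U_{k}^{-1}V_{k}$ and $G = \sum_{k=1}^{p} E_{k}U_{k}^{-1}$.

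Next I would check that the extra hypothesis $R(E_{k}) \subseteq R(A^{T})$ required by Theorem \ref{multi1} is automatically satisfied here: since $A$ is nonsingular, $R(A^{T}) = {\R}^{n}$ contains every subspace of ${\R}^{n}$, in particular $R(E_{k})$, for each $k = 1,2,\ldots,p$. Also $A$ monotone is in particular semimonotone. Therefore all hypotheses of Theorem \ref{multi1} hold, and it applies verbatim: the matrix $H$ induces a unique proper splitting $A = B - C$ with $B = A(I-H)^{-1}$, and this splitting is a convergent proper weak regular splitting, i.e. $B^{-1} \geq 0$, $B^{-1}C = H \geq 0$ and $\rho(B^{-1}C) = \rho(H) < 1$. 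Unpacking the nonsingular terminology, $A = B - C$ is a convergent weak regular splitting, which is precisely the claim. If desired, one can add the remark that $G = (I-H)A^{-1} = B^{-1}$, so the O'Leary--White iteration $x_{i+1} = Hx_{i} + Gb$ is recovered as the single-splitting iteration (\ref{introeq2}) for $A = B - C$.

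There is essentially no obstacle in this proof: the entire content sits in Theorem \ref{multi1}, and the corollary is a routine translation of its statement into the nonsingular setting. The only points that genuinely need to be said, rather than merely computed, are that monotonicity of $A$ forces each $U_{k}$ to be invertible (so "proper weak regular" collapses to "weak regular") and that it forces $R(A^{T}) = {\R}^{n}$ (so the range condition on the $E_{k}$ is vacuous); both were verified above. Hence the proof will be only a few lines long.
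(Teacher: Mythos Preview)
Your proposal is correct and matches the paper's approach exactly: the corollary is stated immediately after Theorem \ref{multi1} with no separate proof, as a direct specialization to the nonsingular case. Your verification that monotonicity makes each $U_{k}$ invertible and that $R(A^{T}) = {\R}^{n}$ renders the range hypothesis on the $E_{k}$ vacuous is precisely the content needed to justify this specialization.
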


Next result says that the induced splitting is also a proper regular
splitting under the assumption of an extra condition $A \geq 0$.

\begin{thm}\label{prms}
Let $(U_{k}, V_{k},E_{k})_{k=1}^{p}$ be a proper weak regular
multisplitting of a semimonotone matrix $A \in {\R}^{m \times n}$.
Then the splitting $A=B-C$ induced by $H$ is a proper regular
splitting if $A \geq 0$ and $R(E_{k})\subseteq R(A^{T})$,
for each $k=1,2,\ldots,p$.
\end{thm}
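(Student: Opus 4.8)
The plan is to build directly on Theorem \ref{multi1}, which already does almost all the work. Since $R(E_{k}) \subseteq R(A^{T})$ for each $k$, Theorem \ref{multi1} guarantees that $H$ induces a unique proper splitting $A=B-C$ with $B=A(I-H)^{-1}$, that this splitting is a convergent proper weak regular splitting, and in particular that $B^{\dagger}=G=\sum_{k=1}^{p}E_{k}U_{k}^{\dagger}\geq 0$, that $B^{\dagger}C=H\geq 0$, and that $\rho(H)<1$. Thus the first defining condition of a proper regular splitting, namely $B^{\dagger}\geq 0$, is already in hand (and it also follows at once from $E_{k}\geq 0$ and $U_{k}^{\dagger}\geq 0$). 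The only thing left to prove is the nonnegativity $C\geq 0$, and this is exactly where the extra hypothesis $A\geq 0$ enters.

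First I would rewrite $C$ in a convenient closed form. Since $A=B-C$ and $B=A(I-H)^{-1}$, we get
$$C=B-A=A(I-H)^{-1}-A=A\left((I-H)^{-1}-I\right)=A(I-H)^{-1}H.$$
Next, recall from Lemma \ref{perea}$(i)$ that $H\geq 0$, and from Theorem \ref{pereathm} (equivalently, from the conclusion of Theorem \ref{multi1}) that $\rho(H)<1$. Hence Theorem \ref{var2} applies and gives $(I-H)^{-1}=\sum_{j=0}^{\infty}H^{j}\geq 0$, so that $(I-H)^{-1}H=\sum_{j=1}^{\infty}H^{j}\geq 0$ as well. Combining this with the hypothesis $A\geq 0$ yields $C=A\cdot(I-H)^{-1}H\geq 0$.

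Putting the two pieces together, $B^{\dagger}\geq 0$ and $C\geq 0$, so $A=B-C$ is a proper regular splitting, as claimed. (It is moreover convergent, which one can read off directly from $\rho(B^{\dagger}C)=\rho(H)<1$, or deduce from the fact that a proper regular splitting is a proper weak regular splitting together with Theorem \ref{m1} and the semimonotonicity of $A$.) Since the heavy lifting — the identification $B=A(I-H)^{-1}$, the range and null space conditions $R(B)=R(A)$ and $N(B)=N(A)$, the uniqueness of the induced splitting, and $B^{\dagger}\geq 0$ — has already been carried out in Theorem \ref{multi1}, there is no genuine obstacle here; the one new ingredient is the elementary observation that $A\geq 0$ forces $C=A(I-H)^{-1}H\geq 0$ via the Neumann series for $(I-H)^{-1}$.
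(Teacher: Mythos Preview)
Your proposal is correct and follows essentially the same route as the paper: invoke Theorem \ref{multi1} to get that $A=B-C$ is a proper weak regular splitting with $B=A(I-H)^{-1}$, then compute $C=B-A=A(I-H)^{-1}H$ and use $H\geq 0$, $\rho(H)<1$ (hence $(I-H)^{-1}\geq 0$ by Theorem \ref{var2}) together with $A\geq 0$ to conclude $C\geq 0$. The paper's argument is identical, only more tersely stated.
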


\begin{proof}
By Theorem \ref{multi1}, the splitting $A=B-C$ induced by $H$ is
proper weak regular. Now we have to show that $C \geq 0$. So
$C=B-A=A(I-H)^{-1}-A=A(I-H)^{-1}H \geq 0$, since $H \geq 0$ and
$\rho(H)<1$ by Theorem \ref{pereathm}.
\end{proof}

We obtain the following corollary for a  square nonsingular matrix
$A$.

\begin{cor}
Let $(U_{k}, V_{k},E_{k})_{k=1}^{p}$ be a  weak regular
multisplitting of a monotone matrix $A \in {\R}^{n \times n}$. Then the splitting $A=B-C$ induced by $H$ is a
regular splitting if $A \geq 0$.
\end{cor}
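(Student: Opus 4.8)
The plan is to obtain this corollary as the nonsingular specialization of Theorem~\ref{prms}, so the whole proof reduces to checking that every hypothesis of Theorem~\ref{prms} is met and that its conclusion translates back into the classical language. First I would record the standard dictionary between the ``proper'' notions and the ordinary ones in the nonsingular case: since $A\in\R^{n\times n}$ is monotone it is invertible and $A^{\dagger}=A^{-1}$, hence $R(A)=\R^{n}$ and $N(A)=\{0\}$. Consequently ``monotone'' coincides with ``semimonotone'' here, ``weak regular splitting'' coincides with ``proper weak regular splitting'', and ``regular splitting'' coincides with ``proper regular splitting''; in particular the triplet $(U_{k},V_{k},E_{k})_{k=1}^{p}$ is a proper weak regular multisplitting of the semimonotone matrix $A$ in the sense of Definition~\ref{multisplitting}.

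The second step is to dispose of the auxiliary range condition that Theorem~\ref{prms} carries. Because $A$ is nonsingular, $R(A^{T})=\R^{n}$, so the requirement $R(E_{k})\subseteq R(A^{T})$ is satisfied automatically for every $k=1,2,\dots,p$, whatever the diagonal matrices $E_{k}$ happen to be. Thus, adding the standing hypothesis $A\geq 0$ of the corollary, all the hypotheses of Theorem~\ref{prms} are in force.

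Finally I would invoke Theorem~\ref{prms} to conclude that the splitting $A=B-C$ induced by $H$ is a proper regular splitting, i.e. $R(B)=R(A)$, $N(B)=N(A)$, $B^{\dagger}\geq 0$ and $C\geq 0$. It remains only to read this back in the nonsingular setting: $B=A(I-H)^{-1}$ with $A$ invertible and $\rho(H)<1$ by Theorem~\ref{pereathm}, so $I-H$ is invertible and hence $B$ is invertible; therefore $B^{\dagger}=B^{-1}\geq 0$, and together with $C\geq 0$ this is precisely the statement that $A=B-C$ is a regular splitting of $A$, as claimed.

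I do not anticipate a genuine obstacle here: the entire content is the bookkeeping of specializing Theorem~\ref{prms}, the only mildly delicate point being to observe that $B$ is truly nonsingular, so that the symbol $B^{\dagger}$ appearing in Theorem~\ref{prms} really is the ordinary inverse $B^{-1}$ — and that is immediate from $B=A(I-H)^{-1}$ and $\rho(H)<1$.
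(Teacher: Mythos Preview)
Your proposal is correct and matches the paper's approach: the corollary is stated immediately after Theorem~\ref{prms} as its square nonsingular specialization, with no separate proof given, and your reduction (monotone $\Rightarrow$ semimonotone, $R(A^{T})=\R^{n}$ making the range condition on $E_{k}$ automatic, proper regular $\Rightarrow$ regular via invertibility of $B=A(I-H)^{-1}$) is exactly the intended reading.
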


Next theorem compares the spectral radii between a multisplitting
and a splitting of a real rectangular matrix $A$.

\begin{thm}\label{multi2}
Let $(U_{k}, V_{k},E_{k})_{k=1}^{p}$ be a proper weak regular
multisplitting of a semimonotone matrix $A \in {\R}^{m \times n}$
and $\underline{U},\overline{U} \in {\R}^{m \times n}$
 such that
  $$\overline{U}^{\dagger} \leq U_{k}^{\dagger}\leq \underline{U}^{\dagger}, ~\text{for each}~ k=1,2,\ldots,p.$$
   and $R(E_{k})\subseteq R(A^{T}), ~ \text{for each}~
 k=1,2,\ldots,p$.\\

$(i)$ If $A=\overline{U}-\overline{V}$ is a proper regular splitting
and row sums of $\overline{U}^{\dagger}$ are positive, then
$$\rho(H)\leq \rho(\overline{U}^{\dagger}\overline{V}).$$

 $(ii)$ If
$A=\underline{U}-\underline{V}$ is a proper regular splitting,
 then $$\rho(\underline{U}^{\dagger}\underline{V})\leq \rho(H).$$
\end{thm}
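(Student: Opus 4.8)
The plan is to route everything through the splitting of $A$ induced by $H$ and then to apply the comparison theorem for proper weak regular splittings (Theorem~\ref{d4}). Because the hypothesis $R(E_{k})\subseteq R(A^{T})$ holds for each $k$, Theorem~\ref{multi1} tells us that $B=A(I-H)^{-1}$ defines a convergent proper weak regular splitting $A=B-C$ of the semimonotone matrix $A$, with $B^{\dagger}=G=\sum_{k=1}^{p}E_{k}U_{k}^{\dagger}$ and $B^{\dagger}C=H$; in particular $\rho(B^{\dagger}C)=\rho(H)$. So I would recast the two desired inequalities as comparisons between the proper weak regular splitting $A=B-C$ and the proper regular splitting $A=\overline{U}-\overline{V}$ (for $(i)$) or $A=\underline{U}-\underline{V}$ (for $(ii)$), both viewed as proper weak regular splittings of the same semimonotone matrix $A$.

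The one computation needed first is the sandwich $\overline{U}^{\dagger}\le B^{\dagger}\le\underline{U}^{\dagger}$. Since each $E_{k}\ge 0$ is a diagonal matrix, left multiplication by $E_{k}$ preserves entrywise inequalities, so from $\overline{U}^{\dagger}\le U_{k}^{\dagger}\le\underline{U}^{\dagger}$ we get $E_{k}\overline{U}^{\dagger}\le E_{k}U_{k}^{\dagger}\le E_{k}\underline{U}^{\dagger}$ for every $k$; summing over $k$ and using $\sum_{k=1}^{p}E_{k}=I$ yields $\overline{U}^{\dagger}\le B^{\dagger}\le\underline{U}^{\dagger}$.

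For $(i)$ I would apply Theorem~\ref{d4} to the pair $A=B-C$ and $A=\overline{U}-\overline{V}$, with $B$ playing the role of $U_{1}$ and $\overline{U}$ that of $U_{2}$: from the sandwich $B^{\dagger}\ge\overline{U}^{\dagger}\ge 0$, the row sums of $\overline{U}^{\dagger}$ are positive by hypothesis, and $\overline{V}\ge 0$ by proper regularity. This is exactly case $(iii)$ of Theorem~\ref{d4}, so $\rho(H)=\rho(B^{\dagger}C)\le\rho(\overline{U}^{\dagger}\overline{V})$. For $(ii)$ I would instead apply Theorem~\ref{d4} to $A=\underline{U}-\underline{V}$ and $A=B-C$, with $\underline{U}$ as $U_{1}$ and $B$ as $U_{2}$: here $\underline{U}^{\dagger}\ge B^{\dagger}$ from the sandwich and $\underline{V}\ge 0$ by proper regularity, which is case $(ii)$ of Theorem~\ref{d4}, yielding $\rho(\underline{U}^{\dagger}\underline{V})\le\rho(B^{\dagger}C)=\rho(H)$.

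The main point to get right — really the only subtlety — is the asymmetry in which case of Theorem~\ref{d4} is available on each side, and this is what dictates the two differing hypotheses. On the upper side in $(i)$ there is no control on the row sums of $B^{\dagger}$, which forces $\overline{U}$ to be the ``second'' splitting and explains why the positive-row-sum condition is imposed on $\overline{U}^{\dagger}$; on the lower side in $(ii)$ the nonnegativity $\underline{V}\ge 0$ coming from proper regularity lets us invoke the weaker case $(ii)$ of Theorem~\ref{d4}, so no row-sum condition on $B^{\dagger}$ is required. Apart from this bookkeeping, the proof is a direct assembly of Theorems~\ref{multi1} and~\ref{d4}.
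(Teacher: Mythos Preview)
Your proposal is correct and follows essentially the same approach as the paper: reduce to the induced proper weak regular splitting $A=B-C$ from Theorem~\ref{multi1}, derive $\overline{U}^{\dagger}\le B^{\dagger}\le\underline{U}^{\dagger}$ from the hypothesis, and then apply Theorem~\ref{d4}$(iii)$ for part $(i)$ and Theorem~\ref{d4}$(ii)$ for part $(ii)$. Your write-up is in fact more explicit than the paper's, which leaves the sandwich inequality and the invocation of Theorem~\ref{multi1} largely implicit.
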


\begin{proof}
$(i)$ Let $\widetilde{U}_{1}=B,~
\widetilde{U}_{2}=\overline{U}~\text{and}~
\widetilde{V}_{2}=\overline{V}$. Then
$\widetilde{U}_{1}^{\dagger}\widetilde{V}_{1}=B^{\dagger}(B-A)=B^{\dagger}B-B^{\dagger}A=A^{\dagger}A-(I-H)A^{\dagger}A
=HA^{\dagger}A=H \geq 0.$ The condition $U_{k}^{\dagger} \geq
\overline{U}^{\dagger}$ implies $\widetilde{U}_{1}^{\dagger} \geq
\widetilde{U_{2}}^{\dagger},~\widetilde{V_{2}} \geq 0$. By Theorem
\ref{d4} $(iii)$, we then have $\rho(H) \leq
\rho(\overline{U}^{\dagger}\overline{V}).$\\

$(ii)$  Define
 $\widetilde{U}_{1}=\underline{U},~\widetilde{V}_{1}=\underline{V}~\text{and}~\widetilde{U}_{2}=B,$
 and on applying Theorem \ref{d4} $(ii)$, we obtain  $\rho(\underline{U}^{\dagger}\underline{V})\leq \rho(H)$.
\end{proof}

For a square nonsingular matrix $A$, the above result reduces to the following
corollary.

\begin{cor}
Let $(U_{k}, V_{k},E_{k})_{k=1}^{p}$ be a  weak regular
multisplitting of a monotone matrix $A \in {\R}^{n \times n}$ and
$\underline{U},\overline{U} \in {\R}^{n \times n}$
 such that $$\overline{U}^{-1} \leq U_{k}^{-1}\leq \underline{U}^{-1}, ~\text{for each}~ k=1,2,\ldots,p.$$

$(i)$ If $A=\overline{U}-\overline{V}$ is a regular splitting, then
$$\rho(H)\leq \rho(\overline{U}^{-1}\overline{V}).$$

$(ii)$ If $A=\underline{U}-\underline{V}$ is a  regular splitting,
 then $$\rho(\underline{U}^{-1}\underline{V})\leq \rho(H).$$
\end{cor}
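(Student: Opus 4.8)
The plan is to obtain this corollary as a direct specialization of Theorem \ref{multi2} to the square nonsingular setting, the only real work being to verify that the two auxiliary hypotheses of Theorem \ref{multi2} — the inclusions $R(E_k)\subseteq R(A^T)$ for all $k$, and (in part (i)) the positivity of the row sums of $\overline{U}^{\dagger}$ — become automatic when $A$ is invertible.

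First I would record the standard facts that make the dictionary work. For a nonsingular matrix $M$ one has $M^{\dagger}=M^{-1}$, so $U_k^{\dagger}=U_k^{-1}$, $\overline{U}^{\dagger}=\overline{U}^{-1}$ and $\underline{U}^{\dagger}=\underline{U}^{-1}$; hence the chain $\overline{U}^{-1}\le U_k^{-1}\le\underline{U}^{-1}$ is exactly the chain $\overline{U}^{\dagger}\le U_k^{\dagger}\le\underline{U}^{\dagger}$ required in Theorem \ref{multi2}. Moreover, whenever $U$ is invertible we trivially have $R(U)=\R^n=R(A)$ and $N(U)=\{0\}=N(A)$, so a weak regular splitting (respectively a regular splitting) of a nonsingular $A$ is in particular a proper weak regular splitting (respectively a proper regular splitting); and a monotone matrix satisfies $A^{\dagger}=A^{-1}\ge 0$, so it is semimonotone. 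Consequently $(U_k,V_k,E_k)_{k=1}^{p}$ is a proper weak regular multisplitting of the semimonotone matrix $A$, and $H=\sum_{k=1}^{p}E_kU_k^{-1}V_k$ is precisely the matrix $H$ of Theorem \ref{multi2}.

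Next I would dispose of the two auxiliary hypotheses. Since $A\in\R^{n\times n}$ is nonsingular, $R(A^T)=\R^n$, so $R(E_k)\subseteq R(A^T)$ holds for every diagonal $E_k$ with nothing to check. For part (i), because $A=\overline{U}-\overline{V}$ is a regular splitting of the nonsingular matrix $A$, the matrix $\overline{U}$ is invertible with $\overline{U}^{-1}\ge 0$; an invertible nonnegative matrix cannot have an identically zero row, so each row of $\overline{U}^{-1}$ contains a positive entry and therefore each row sum of $\overline{U}^{-1}=\overline{U}^{\dagger}$ is strictly positive. Thus all hypotheses of Theorem \ref{multi2}(i) are met, giving $\rho(H)\le\rho(\overline{U}^{-1}\overline{V})$, and part (ii) follows at once from Theorem \ref{multi2}(ii), which carries no extra hypothesis.

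I do not expect a substantive obstacle: the entire content is the observation that the range condition and the row-sum condition of Theorem \ref{multi2} degenerate in the nonsingular case. The only step that is more than a one-line remark is the argument, just sketched, that invertibility and nonnegativity of $\overline{U}^{-1}$ force its row sums to be positive; everything else is immediate once $M^{\dagger}=M^{-1}$ and $R(A^T)=\R^n$ are noted.
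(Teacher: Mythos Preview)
Your proposal is correct and matches the paper's approach: the corollary is obtained directly by specializing Theorem \ref{multi2} to the nonsingular case, where $M^{\dagger}=M^{-1}$, the range condition $R(E_k)\subseteq R(A^T)=\R^n$ is vacuous, and the row-sum hypothesis on $\overline{U}^{\dagger}=\overline{U}^{-1}$ holds because an invertible nonnegative matrix has no zero row. Your write-up is in fact more explicit than the paper's, which simply states that the theorem ``reduces to'' the corollary in the square nonsingular setting.
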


The spectral radii of iteration matrices of two proper weak regular
multisplittings of the same coefficient matrix $A$ is compared
below.

\begin{thm}\label{multi4}
Let  $(U_{k}^{(i)}, V_{k}^{(i)},E_{k})_{k=1}^{p},~i=1,2$, be two
proper weak regular multisplittings of a non-negative semimonotone matrix
$A \in {\R}^{m \times n}$ such that $R(E_{k})\subseteq
R(A^{T}),~\text{for each}~ k=1,2,\ldots,p$. If $V_{k}^{(2)} \geq
V_{k}^{(1)}, ~\text{for each}~ k=1,2,\ldots,p,$
 then $$\rho(H_1)\leq \rho(H_2)<1,$$
where
$H_{i}=\displaystyle\sum_{k=1}^{p}E_{k}[U_{k}^{(i)}]^{\dagger}V_{k}^{(i)},~\text{for
each}~i=1,2.$
\end{thm}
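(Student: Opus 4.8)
The plan is to prove the stronger, purely matrix-theoretic inequality $0\le H_1\le H_2$ and then read off the spectral radius comparison from Theorem~\ref{var1}, the bound $\rho(H_2)<1$ being supplied at once by Theorem~\ref{pereathm}. The first step is to transfer the hypothesis on the $V_k$'s to the $U_k$'s: since $A=U_k^{(i)}-V_k^{(i)}$ is a proper splitting for each $i$, we have $U_k^{(2)}=A+V_k^{(2)}\geq A+V_k^{(1)}=U_k^{(1)}$ for every $k$. Both $U_k^{(1)}$ and $U_k^{(2)}$ are semimonotone (the two multisplittings being proper weak regular forces $[U_k^{(1)}]^{\dagger}\geq 0$ and $[U_k^{(2)}]^{\dagger}\geq 0$), and they share their range and null spaces, $R(U_k^{(1)})=R(A)=R(U_k^{(2)})$ and $N(U_k^{(1)})=N(A)=N(U_k^{(2)})$. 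Hence the order-reversing property of the Moore--Penrose inverse on matrices with a common range and null space (Lemma~\ref{lemmad4}, in its rectangular form) yields
$$[U_k^{(1)}]^{\dagger}\geq [U_k^{(2)}]^{\dagger}\geq 0,\qquad k=1,2,\ldots,p.$$

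Next I would use the identity $[U_k^{(i)}]^{\dagger}V_k^{(i)}=A^{\dagger}A-[U_k^{(i)}]^{\dagger}A$, immediate from $V_k^{(i)}=U_k^{(i)}-A$ together with $[U_k^{(i)}]^{\dagger}U_k^{(i)}=P_{R((U_k^{(i)})^{T})}=P_{R(A^{T})}=A^{\dagger}A$, where the middle equality holds because $N(U_k^{(i)})=N(A)$. Subtracting the $i=1$ and $i=2$ versions, the $A^{\dagger}A$ terms cancel and for each $k$
$$[U_k^{(2)}]^{\dagger}V_k^{(2)}-[U_k^{(1)}]^{\dagger}V_k^{(1)}=\bigl([U_k^{(1)}]^{\dagger}-[U_k^{(2)}]^{\dagger}\bigr)A\geq 0,$$
since $[U_k^{(1)}]^{\dagger}-[U_k^{(2)}]^{\dagger}\geq 0$ and $A\geq 0$. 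Premultiplying by the non-negative diagonal matrices $E_k$ and summing over $k$ gives $H_2-H_1\geq 0$. Because $H_1\geq 0$ by Lemma~\ref{perea}$(i)$, we conclude $0\leq H_1\leq H_2$, so Theorem~\ref{var1} gives $\rho(H_1)\leq\rho(H_2)$; and $\rho(H_2)<1$ by Theorem~\ref{pereathm} applied to the proper weak regular multisplitting $(U_k^{(2)},V_k^{(2)},E_k)_{k=1}^{p}$ of the semimonotone matrix $A$.

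The one place that needs genuine care is this first step: Lemma~\ref{lemmad4} is recorded above only for square matrices, whereas the $U_k^{(i)}$ are rectangular, so I would either invoke its rectangular analogue or reprove $[U_k^{(1)}]^{\dagger}\geq[U_k^{(2)}]^{\dagger}$ directly from $U_k^{(2)}\geq U_k^{(1)}$, $[U_k^{(i)}]^{\dagger}\geq 0$ and the equality of ranges and null spaces, in the manner of~\cite{d4}. Everything else is a short computation; it is worth noting that the hypothesis $R(E_k)\subseteq R(A^{T})$ is not actually invoked in this argument, although it is natural to retain it for uniformity with Theorems~\ref{multi1}--\ref{multi2}.
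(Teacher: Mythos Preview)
Your argument is correct and follows a genuinely different, more direct route than the paper. The paper, after deriving $[U_k^{(1)}]^{\dagger}\ge[U_k^{(2)}]^{\dagger}$ exactly as you do, does not compare $H_1$ and $H_2$ entrywise. Instead it sums to obtain $B_1^{\dagger}=\sum_k E_k[U_k^{(1)}]^{\dagger}\ge\sum_k E_k[U_k^{(2)}]^{\dagger}=B_2^{\dagger}$, invokes Theorem~\ref{prms} (which needs both $A\ge0$ and $R(E_k)\subseteq R(A^T)$) to conclude that the induced splittings $A=B_i-C_i$ are proper \emph{regular}, and then applies the comparison Theorem~\ref{calcolo} to $B_1^{\dagger}\ge B_2^{\dagger}$.

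Your identity $[U_k^{(i)}]^{\dagger}V_k^{(i)}=A^{\dagger}A-[U_k^{(i)}]^{\dagger}A$ short-circuits all of this: it converts the comparison of the iteration blocks into $([U_k^{(1)}]^{\dagger}-[U_k^{(2)}]^{\dagger})A\ge0$, which uses $A\ge0$ directly and gives the stronger entrywise inequality $0\le H_1\le H_2$, whence Theorem~\ref{var1} and Theorem~\ref{pereathm} finish. Two dividends of your approach are that the hypothesis $R(E_k)\subseteq R(A^T)$ is never used, and that you avoid the induced-splitting machinery of Theorems~\ref{multi1} and~\ref{prms} entirely. The paper's route, in exchange, yields the structural information that the $H_i$ arise from proper regular splittings of $A$, which is of independent interest.

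Your caveat about Lemma~\ref{lemmad4} being stated only for square matrices is well taken, but note that the paper's own proof applies it to the rectangular $U_k^{(i)}$ as well; the cited source \cite{d4} treats rectangular matrices, so the restriction to $\mathbb{R}^{n\times n}$ in the statement here is in effect a transcription slip rather than a genuine obstacle for either argument.
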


\begin{proof}
From $V_{k}^{(2)} \geq V_{k}^{(1)}, ~\text{for each}~
~k=1,2,\ldots,p,$ we obtain $$U_{k}^{(2)} \geq U_{k}^{(1)},~\text{for each}~ k=1,2,\ldots,p.$$ Since
$R(U_{k}^{(1)})=R(U_{k}^{(2)})$ and
$N(U_{k}^{(1)})=N(U_{k}^{(2)})$ by Lemma \ref{lemmad4}, it
follows that $$[U_{k}^{(1)}]^{\dagger} \geq
[U_{k}^{(2)}]^{\dagger},~\text{for each}~  k=1,2,\ldots,p.$$
Consequently, $$\displaystyle
\sum_{k=1}^{p}E_{k}[U_{k}^{(1)}]^{\dagger} \geq
\sum_{k=1}^{p}E_{k}[U_{k}^{(2)}]^{\dagger},$$ i.e.,
$$B_{1}^{\dagger} \geq B_{2}^{\dagger}.$$ By
Theorem \ref{prms},  the splittings $A=B_{1}-C_{1}=B_{2}-C_{2}$
induced by $H_{1}$ and $H_{2}$ are proper regular splittings. Hence,
by Theorem \ref{calcolo}, we obtain  $\rho(H_{1}) \leq
\rho(H_{2})<1.$
\end{proof}

We have the following corollary.

\begin{cor}
Let $(U_{k}^{(i)}, V_{k}^{(i)}, E_{k})_{k=1}^{p},~i=1,2$,
be two weak regular multisplittings of a non-negative monotone matrix $A
\in {\R}^{n \times n}$.
 If $V_{k}^{(2)} \geq V_{k}^{(1)}, ~\text{for each}~ k=1,2,\ldots,p,$
 then $$\rho(H_1)\leq \rho(H_2)<1,$$
where
$H_{i}=\displaystyle\sum_{k=1}^{p}E_{k}[U_{k}^{(i)}]^{-1}V_{k}^{(i)},~\text{for
each}~i=1,2.$
\end{cor}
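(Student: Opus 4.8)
The plan is to obtain this corollary directly from Theorem \ref{multi4} by specializing everything to the square nonsingular setting. First I would note that a monotone matrix $A \in \R^{n\times n}$ is nonsingular with $A^{-1}\geq 0$, and since $A^{\dagger}=A^{-1}$ for nonsingular $A$, this gives $A^{\dagger}\geq 0$; hence $A$ is semimonotone in the sense used in Theorem \ref{multi4}. Together with the standing hypothesis $A\geq 0$, the matrix $A$ is a non-negative semimonotone matrix, exactly as required there.

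Next I would check that a weak regular multisplitting of the nonsingular matrix $A$ is automatically a \emph{proper} weak regular multisplitting. For each $k$, the existence of $U_k^{-1}$ forces $R(U_k)=\R^n=R(A)$ and $N(U_k)=\{0\}=N(A)$, so $A=U_k-V_k$ is a proper splitting; and the conditions $U_k^{-1}=U_k^{\dagger}\geq 0$, $U_k^{-1}V_k=U_k^{\dagger}V_k\geq 0$ are precisely the defining inequalities of a proper weak regular splitting. Moreover, since $A$ is $n\times n$ and nonsingular, $R(A^{T})=\R^n$, so the hypothesis $R(E_k)\subseteq R(A^{T})$ of Theorem \ref{multi4} holds trivially for every diagonal matrix $E_k$ — which is why it is absent from the statement of the corollary.

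Finally, the monotonicity assumption $V_k^{(2)}\geq V_k^{(1)}$ for each $k$ transfers verbatim, and the iteration matrix $H_i=\sum_{k=1}^{p}E_k[U_k^{(i)}]^{-1}V_k^{(i)}=\sum_{k=1}^{p}E_k[U_k^{(i)}]^{\dagger}V_k^{(i)}$ coincides with the one in Theorem \ref{multi4}. Applying that theorem then yields $\rho(H_1)\leq \rho(H_2)<1$, which is the claim. I do not expect any real obstacle here: the content is a matter of matching hypotheses, the only subtle point being that the range condition on the $E_k$ collapses to a triviality in the nonsingular case.
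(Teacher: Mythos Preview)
Your proposal is correct and matches the paper's intent: the corollary is stated immediately after Theorem \ref{multi4} without proof, precisely because it is obtained by the specialization you describe. Your observation that the range condition $R(E_k)\subseteq R(A^T)$ becomes vacuous in the nonsingular case is exactly why that hypothesis disappears from the corollary's statement.
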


\begin{remark}
Theorem \ref{prms} and  Theorem \ref{multi4}  are also true if we assume $G^{\dagger} \geq 0$ instead of $A \geq 0$.
\end{remark}

Next result compares the spectral radii of iteration matrices of two
proper weak regular multisplittings of the same coefficient matrix
$A$.

\begin{thm}\label{multi6}
Let $(U_{k}^{(i)}, V_{k}^{(i)},E_{k})_{k=1}^{p},~ i=1,2,$ be two
proper weak regular multisplittings of a non-negative semimonotone
matrix $A \in {\R}^{m \times n}$ such that $R(E_{k})\subseteq
R(A^{T}), ~\text{for each}~ k=1,2,\ldots,p$. If
$[U_{k}^{(1)}]^{\dagger} \geq [U_{k}^{(2)}]^{\dagger}, ~\text{for
each}~ k=1,2,\ldots,p,$ then $$\rho(H_{1}) \leq \rho(H_{2})<1.$$
\end{thm}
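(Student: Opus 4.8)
The plan is to reduce Theorem~\ref{multi6} to the comparison theorem for proper regular splittings, Theorem~\ref{calcolo}, by passing to the proper splittings induced by $H_1$ and $H_2$; the route is exactly the one used for Theorem~\ref{multi4}, only shorter, since the hypothesis is now phrased directly in terms of $[U_k^{(i)}]^{\dagger}$.

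First I would apply Theorem~\ref{multi1} to each multisplitting: because $(U_k^{(i)},V_k^{(i)},E_k)_{k=1}^{p}$ is a proper weak regular multisplitting of the semimonotone matrix $A$ and $R(E_k)\subseteq R(A^{T})$ for all $k$, the iteration matrix $H_i$ induces a unique convergent proper weak regular splitting $A=B_i-C_i$ with $B_i=A(I-H_i)^{-1}$, $B_i^{\dagger}=G_i=(I-H_i)A^{\dagger}=\sum_{k=1}^{p}E_k[U_k^{(i)}]^{\dagger}$, and $B_i^{\dagger}C_i=H_i$. Since the extra hypothesis $A\geq 0$ holds, Theorem~\ref{prms} then upgrades each $A=B_i-C_i$ to a proper \emph{regular} splitting, so $B_i^{\dagger}\geq 0$ and $C_i\geq 0$.

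Next I would propagate the inequality $[U_k^{(1)}]^{\dagger}\geq[U_k^{(2)}]^{\dagger}$ through the nonnegative diagonal matrices $E_k$: multiplying on the left by $E_k\geq 0$ and summing over $k$ gives
$$B_1^{\dagger}=\sum_{k=1}^{p}E_k[U_k^{(1)}]^{\dagger}\ \geq\ \sum_{k=1}^{p}E_k[U_k^{(2)}]^{\dagger}=B_2^{\dagger}\geq 0.$$
Thus $A=B_1-C_1=B_2-C_2$ are two proper regular splittings of the semimonotone matrix $A$ with $B_1^{\dagger}\geq B_2^{\dagger}$, and Theorem~\ref{calcolo} applies verbatim, yielding $\rho(H_1)=\rho(B_1^{\dagger}C_1)\leq\rho(B_2^{\dagger}C_2)=\rho(H_2)<1$, which is the assertion.

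I do not expect a genuine obstacle here, since all the heavy lifting resides in Theorems~\ref{multi1}, \ref{prms} and~\ref{calcolo}; the only points needing care are bookkeeping ones: confirming $B_i^{\dagger}C_i=H_i$ (so the quantity compared really is $\rho(H_i)$) and confirming that $B_i$ is semimonotone (immediate from $B_i^{\dagger}=G_i\geq 0$), both of which come out of the proof of Theorem~\ref{multi1}. The one conceptual point worth flagging is the role of the hypothesis $A\geq 0$ (equivalently $G_i^{\dagger}\geq 0$, as in the Remark following Theorem~\ref{multi4}): it is exactly what turns the induced proper weak regular splittings into proper regular splittings, and Theorem~\ref{calcolo} is available only for the latter class.
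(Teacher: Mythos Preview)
Your proposal is correct and follows essentially the same route as the paper's own proof: invoke Theorem~\ref{prms} (which itself rests on Theorem~\ref{multi1}) to conclude that the induced splittings $A=B_i-C_i$ are proper regular, multiply the hypothesis $[U_k^{(1)}]^{\dagger}\geq[U_k^{(2)}]^{\dagger}$ by $E_k$ and sum to obtain $B_1^{\dagger}\geq B_2^{\dagger}$, and then apply Theorem~\ref{calcolo}. The only difference is that you spell out explicitly the identifications $B_i^{\dagger}=G_i$ and $B_i^{\dagger}C_i=H_i$ coming from Theorem~\ref{multi1}, which the paper leaves implicit.
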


\begin{proof} By Theorem \ref{prms}, the splittings $A=B_{1}-C_{1}=B_{2}-C_{2}$
 induced by $H_{1}=\displaystyle\sum_{k=1}^{p}E_{k}[U_{k}^{(1)}]^{\dagger}V_{k}^{(1)}$ and
 $H_{2}=\displaystyle\sum_{k=1}^{p}E_{k}[U_{k}^{(2)}]^{\dagger}V_{k}^{(2)}$ are  proper regular splittings.
From $$[U_{k}^{(1)}]^{\dagger} \geq [U_{k}^{(2)}]^{\dagger},
~\text{for each}~ k=1,2,\ldots,p,$$  we have
$$\displaystyle\sum_{k=1}^{p}E_{k} [U_{k}^{(1)}]^{\dagger}
\geq \sum_{k=1}^{p}E_{k}[U_{k}^{(2)}]^{\dagger},~\text{for each}~
k=1,2,\ldots p,$$
 i.e., $$B_{1}^{\dagger} \geq B_{2}^{\dagger}.$$ Hence, by Theorem \ref{calcolo},
  we obtain $\rho(H_{1}) \leq \rho(H_{2})<1.$

\end{proof}

  The following corollary  follows immediately from the above result
  when a square nonsingular system  of linear equations is considered.

\begin{cor}
Let $(U_{k}^{(i)}, V_{k}^{(i)},E_{k})_{k=1}^{p},~
i=1,2,$ be two weak regular multisplittings  of a non-negative
monotone matrix $A \in {\R}^{n \times n}$.
 If $[U_{k}^{(1)}]^{-1} \geq
[U_{k}^{(2)}]^{-1}, ~\text{for each}~ k=1,2,\ldots,p,$ then
$$\rho(H_{1}) \leq \rho(H_{2})<1.$$
\end{cor}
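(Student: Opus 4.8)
The plan is to obtain this corollary as the square nonsingular specialization of Theorem \ref{multi6}; no genuinely new argument is needed, only a verification that every hypothesis of Theorem \ref{multi6} is automatically in force in the present setting. First I would record that when $A\in{\R}^{n\times n}$ is nonsingular one has $A^{\dagger}=A^{-1}$, so ``$A$ non-negative monotone'' is exactly ``$A$ non-negative semimonotone'' in the terminology of Theorem \ref{multi6}. Moreover $R(A^{T})={\R}^{n}$, hence the range condition $R(E_{k})\subseteq R(A^{T})$ holds trivially for every $k=1,2,\ldots,p$, so it imposes nothing; and the diagonal matrices $E_{k}\geq 0$ with $\sum_{k=1}^{p}E_{k}=I$ already satisfy the requirements of Definition \ref{multisplitting}.

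Next I would check that a weak regular multisplitting of a nonsingular $A$ is the same object as a proper weak regular multisplitting. Indeed, for each $k$ and each $i$ the matrix $U_{k}^{(i)}$ is $n\times n$ with $[U_{k}^{(i)}]^{-1}$ existing, so $R(U_{k}^{(i)})={\R}^{n}=R(A)$ and $N(U_{k}^{(i)})=\{0\}=N(A)$; thus $A=U_{k}^{(i)}-V_{k}^{(i)}$ is a proper splitting, and $[U_{k}^{(i)}]^{\dagger}=[U_{k}^{(i)}]^{-1}\geq 0$ together with $[U_{k}^{(i)}]^{\dagger}V_{k}^{(i)}=[U_{k}^{(i)}]^{-1}V_{k}^{(i)}\geq 0$ makes it a proper weak regular splitting. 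Hence $(U_{k}^{(i)},V_{k}^{(i)},E_{k})_{k=1}^{p}$ is a proper weak regular multisplitting of the non-negative semimonotone matrix $A$, for each $i=1,2$.

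Finally, the hypothesis $[U_{k}^{(1)}]^{-1}\geq[U_{k}^{(2)}]^{-1}$ reads $[U_{k}^{(1)}]^{\dagger}\geq[U_{k}^{(2)}]^{\dagger}$ for each $k=1,2,\ldots,p$, and the iteration matrices coincide with those appearing in Theorem \ref{multi6}, namely $H_{i}=\sum_{k=1}^{p}E_{k}[U_{k}^{(i)}]^{\dagger}V_{k}^{(i)}$. Applying Theorem \ref{multi6} then yields $\rho(H_{1})\leq\rho(H_{2})<1$, which is the assertion. There is no real obstacle here: the only thing that requires a little care is the bookkeeping translation of the hypotheses, since all the substantive work is already carried out inside Theorem \ref{multi6} and, through it, in Theorem \ref{prms} and Theorem \ref{calcolo}.
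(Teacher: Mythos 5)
Your proposal is correct and matches the paper's approach: the paper derives this corollary directly as the square nonsingular specialization of Theorem \ref{multi6}, exactly as you do, with the only work being the routine verification that monotonicity, the proper-splitting conditions, and the range condition $R(E_k)\subseteq R(A^T)={\R}^n$ hold automatically. Your careful bookkeeping of these trivialized hypotheses is a faithful (and slightly more explicit) version of the paper's one-line justification.
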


\section{Conclusions}
The notion of proper multisplittings proposed by Climent and Perea
\cite{c3}  is  interesting notion for solving the singular and
 rectangular linear systems. It
allows us to get the solution in a parallel perspective. The
convergence of the iterative method (\ref{climenteq}) is then ensured by the same
authors. In this work, a few comparison results are shown. Apart
from these, the type of the induced splitting induced by the
iteration matrix formed by proper multisplittings is guaranteed
under some hypotheses. This result also makes a contribution to the
convergence of the induced splitting (Theorem 5.5). The results
discussed in Section  3 and 4 compare the spectral radii of the
iteration matrices formed by different types of matrix splittings.

\vspace{1cm}

\noindent {\small {\bf Acknowledgments.}\\
We wish to thank  K. C.  Sivakumar for his valuable comments on an earlier version of
the manuscript. The second author acknowledges the support provided by Chhattisgarh Council of Science
and Technology, Chhattisgarh, India under the grant number:
2545/CCOST/MRP/2016.

\section*{References}

\end{document}